\theoremstyle{plain}
\newtheorem{theorem}{Theorem}
\newtheorem{lemma}{Lemma}
\newtheorem{problem}{Problem}
\newtheorem{corollary}{Corollary}
\newtheorem{proposition}{Proposition}
\theoremstyle{definition}
\newtheorem{definition}{Definition}
\newtheorem{example}{Example}
\renewcommand{\Re}{\mathop{\mathrm{Re}}}
\DeclareMathOperator{\Arg}{Arg}
\DeclareMathOperator{\id}{id}
\numberwithin{equation}{section}
\numberwithin{example}{section}
\numberwithin{theorem}{section}
\numberwithin{proposition}{section}
\numberwithin{lemma}{section}
\numberwithin{definition}{section}
\numberwithin{problem}{section}
\begin{document}

\hfill {\it Dedicated to G. M. Henkin}\medskip
\begin{center}
    \Huge \bf Inverse problems in models\\ of resource distribution
\end{center}

\begin{center} \textit{A. D. Agaltsov}\footnote{(i) CMAP, Ecole Polytechnique, CNRS, Universit\'e Paris-Saclay, 91128 Palaiseau, France; (ii) Moscow Institute of Physics and Technology, Institutskiy Per. 9, Dolgoprudnyi 141700, Russia; email: alexey.agaltsov@polytechnique.edu}, \textit{E. G. Molchanov}\footnote{Moscow Institute of Physics and Technology, Institutskiy Per 9, Dolgoprudnyi 141700, Russia; email:molchanov.eg@mipt.ru}, \textit{A. A. Shananin}\footnote{(i) Moscow Institute of Physics and Technology, Institutskiy Per. 9, Dolgoprudnyi 141700, Russia; (ii) Russian Acad Sci., Fed. Res. Ctr. Comp. Sci. and Control, Ul. Vavilova 44-2, Moscow 119333, Russia; (iii) Moscow MV Lomonosov State University, Leninskiye Gory 1-52, Moscow 119991, Russia; (iv) Peoples Friendship Univ. Russia, Miklukho Maklaya Str. 6, Moscow 117198, Russia; email:shananin.aa@mipt.ru}\\ \bigskip

\today \end{center}

\begin{quote} \textbf{Abstract.} We continue to study the problem of modeling of substitution of production factors motivated by the need for computable mathematical models of economics that could be used as a basis in applied developments. This problem has been studied for several decades, and several connections to complex analysis and geometry has been established. We describe several models of resource distribution and discuss the inverse problems for the generalized Radon transform arising is these models. We give a simple explicit range characterization for a particular case of the generalized Radon transform, and we apply it to show that the most popular production functions are compatible with these models. Besides, we give a necessary condition and a sufficient condition for solvability of the model identification problem in the form of an appropriate moment problem. These conditions are formulated in terms of rhombic tilings.\medskip

\textbf{Keywords:} mathematical economics, inverse problems, integral geometry, discrete geometry, convex programming, generalized Radon transform, rhombic tilings, range characterization, moment problem, Fenchel transform
\end{quote}

\section{Introduction. New problems of mathematical economics in the context of globalization}\label{sec.int}
The idea of being able to analyse social and economic processes using mathematical modeling, as it happens with physical phenomenons, has existed for a long time, at least since the XVIII century. However, the corpus of mathematical models in economy was not formed until the middle of the XX century. Then a special research area was created to improve the quality of gathering and analysis of economic statistics. In the 70s and 80s of the XX century the countries with advanced market relations successfully used computable models of economic equilibrium as a basis in applied developments and in making major social and economic decisions. Unfortunately, later the universality of such models did not meet expectations and the quality of economic statistics decreased. Once again, the dominant role in decision-making and economic analysis was given to expert opinions as opposed to model calculations. So what happened? We believe that this was caused by changes in the world economy that took place during the last quarter of the XX century. Today, one of the crucial tasks of mathematical economics is to study these changes and adapt mathematical models and statistics accordingly.

Globalization has been the main trend in the world economy since the last quarter of the XX century. The home-made products in domestic markets of the developing countries were forced to compete with the imported goods of the same kind. This led to standartization of products and to significant augmentation of substitutability of goods. At the same time economic statistics (e.g. Laspeyres' index) or models used in applied research (e.g. Leontief's input-output model of inter-industry balance) were based on the empirical hypothesis of constant consumption structure of final goods and production factors. Clearly, this hypothesis was significantly violated and, as a result, the gathered and analysed statistics were not able to reflect the economic processes. In turn, this led to problems of identification of generally accepted models. For example, in spite of all the efforts, the Russian national statistical authorities have not managed to indentify the inter-industry balance model since 2003. In the present work we study possible modifications of production models taking into account substitutability of production factors. We also describe and study inverse problems of integral and discrete geometry arising in these modified models.

Below we briefly describe the contents of the present article.

In Section \ref{sec.hjm} we recall the Houthakker-Johansen model of optimal resource distribution for an industry with substitution of production factors at the macro-level which goes back to \cite{Houth1955,Joh1972}. This model is based on Leontief's fixed proportion hypothesis at the micro-level and on the notion of distribution of capacities over micro-level technologies. In Subsection \ref{hjm.dir} we recall the generalized Neyman-Pearson lemma which gives necessary and sufficient  conditions for optimal distributions of resources, and we interpret these conditions as market-type equilibrium mechanisms.

In Subsection \ref{hjm.mic-mac} we recall the connection between the micro- and macro-descriptions of the industry established in \cite{Shananin1997ena,Shananin1997enb}. Next, we discuss the inverse problems of integral geometry (more precisely, inverse problems for the Radon transform with incomplete data) related to study of this connection, and investigated in \cite{Henkin1990a,Henkin1991}. In particular, we recall the range characterization Theorem \ref{hjm.thmchar} based on the local range characterization conditions for the Laplace transform obtained in \cite{Henkin1990a}, and based on Bernstein's theorems on completely monotone functions and separate analyticity. 

In Section \ref{sec.agr} we consider a model of optimal resource distribution for a group of industries producing different products and interconnected by the mutual resource supply. The resources are distributied in a way that maximizes the utility function for a representative rational end consumer.

In Proposition \ref{agr.propsol} of Subsection \ref{agr.pro} we give conditions characterizing optimal distributions of resources. In a similar way with the Houthakker-Johansen model, economic interpretation of these conditions is that the optimal resource distribution is carried out by market-type equilibrium mechanisms. In Proposition \ref{agr.propextr} we formulate the extremal principle which facilitates determination of optimal resource distributions. In Proposition \ref{agr.propdual} we establish that the aggregate production and profit functions are related by a Legendre-Fenchel-Young type transform. 

In Subsection \ref{agr.uni} we investigate universality of the Houthakker-Johansen model: is it possible to describe the whole group of industries using the Houthakker-Johansen model of a single industry? We state a related inverse problem, and recall an example of \cite{Henkin1990a} of a group of two industries that does not admit an aggregate description using the Houthakker-Johansen model. 

In Subsection \ref{agr.cor} we recall a necessary and sufficient condition of \cite{Karzanov2005} for a group of two industries to have an aggregate description using the Houthakker-Johansen model, in a particular case of industries with finite number of technologies and producing complementary goods using the same primary resources. This condition amounts to existence of a so-called stable correspondence between micro-level technologies of these industries. We also give an example of two industries admitting an aggregate description using the Houthakker-Johansen model despite the competition of micro-level technologies for primary resources.

Proofs of the results of Section \ref{sec.agr} are given in Appendix \ref{ape.agr}.

In Section \ref{sec.ghj} we recall the generalized Houthakker-Johansen model of optimal resource distribution for an industry with substitution of production factors at the micro-level which goes back to \cite{Shananin1997ena,Shananin1997enb}. Micro-level substitution is a typical feature of globalization, serving as a mechanism stabilizing inter-industrial connections. Micro-level substitution arises, first of all, due to standartization of production factors. In Subsection \ref{ghj.dir} we recall conditions of \cite{Shananin1997ena} characterizing optimal distributions and we interpret them as market-type equilibrium mechanisms. 

In Subsection \ref{ghj.inv} we recall the connection of between the micro- and macro-descriptions of the industry in the framework of the generalized Houthakker-Johansen model, and we discuss the inverse problems of integral geometry (more precisely, inverse problems for the generalized Radon transform) related to study of this connection, and studied in \cite{Agaltsov2015d,Agaltsov2016c}. In Proposition \ref{ghj.propchar} we give a simple and explicit characterization result for a particular case of completely monotone Radon transforms over the level curves of CES-functions. Applying this result, we obtain micro-founded descriptions for the Cobb-Douglas and CES production functions in the generalized Houthakker-Johansen model.

Proofs of the results of Section \ref{sec.ghj} are given in Appendix \ref{ape.ghj}.

In Section \ref{sec.ide} we study the identification problem for the generalized Houthakker-Johansen model. We state the moment problem: given the times series of outputs and prices of resources and outputs, determine their compatibility with the micro-level technological structure described by a given unit cost function. We recall a necessary and sufficient condition for solvability of this problem of \cite{Shananin1999}. In Proposition \ref{ide.propnes} we give a simple necessary condition for solvability of this problem. In Proposition \ref{ide.propsuf} we show that this necessary condition is also sufficient if a certain condition of discrete convexity is fulfilled.

Proofs of the results of Section \ref{sec.ide} are given in Appendix \ref{ape.ide}.

In Section \ref{sec.ele} we consider a particular case of the moment problem of Section \ref{sec.ide} corresponding to fixed elasticity of substitution at the micro-level. In Subsection \ref{ele.est} we state the problem of estimation of micro-level elasticity of substitution from the times series of outputs and prices of resources and outputs, and we show that this problem is solvable in polynomial time with respect to the length of the time series.

In Subsection \ref{ele.sim} we make a change of variables which allows to simplify the initial elasticity estimation problem. Next, to each elasticity we associate a partition of $\mathbb R^2_+$ by straight lines. Estimation of elasticity reduces to analysis of these partitions.

In Subsection \ref{ele.fwo} we show how to associate to each of these partitions of $\mathbb R^2_+$ a formal word, whose letters are elementary transpositions. We also show that variations of elasticity correspond to applications of the so-called 2-braid and 3-braid moves (also known as Moore-Coxeter relations) to this formal word.

In Subsection \ref{ele.til} we associate to each of the above partitions of $\mathbb R^2_+$ a rhombic tiling, and to each permutation we assign a polygonal chain called snake. In Proposition \ref{ele.snake+} we give a sufficient condition for solutions of the elasticity estimation problem: if the snake corresponding to the order $\lambda$ of the times series of the outputs belongs to the rhombic tiling corresponding to some fixed elasticity, this elasticity is a solution of the elasticity estimation problem. In Proposition \ref{ele.snake-} we give a similar necessary condition: if the snake corresponding to the above order $\lambda$ does not belong to the closed region bounded by the rhombic tiling associated to some fixed elasticity, then this elasticity is not a solution of the elasticity estimation problem. The intermediate case, when this snake corresponding to order $\lambda$ belongs to the closed region bounded by the rhombic tiling associated to some fixed elasticity, but does not belong to the tiling itself, is treated by Proposition \ref{ide.propsol}. Finaly, Proposition \ref{ele.LZ} states that if two words of partitions corresponding to some elasticities are equal, when considered as permutations, to an appropriately defined order of prices, then they can be obtained one from another using finite numbers of 2- and 3-braid moves.

Proofs of the results of Section \ref{sec.ele} are given in Appendix \ref{ape.ele}.

\section{Houthakker-Johansen model of resource distribution with substitution at the macro-level}\label{sec.hjm}

\subsection{Resource distribution problem}\label{hjm.dir}

We consider the resource distribution model introduced in \cite{Houth1955,Joh1972}, see also \cite{Hildenbrand1981}. This model describes the substitution of production factors at the macro-level (i.e., at the level of the industry as a whole) assuming the validity of the empirical Leontief hypothesis at the micro-level (i.e. at the level of single production capacities).

This model is based on the hypothesis of separation of timescales. According to this hypothesis, transformations of production capacities take place in the ``slow timescale'' and are related to management of capital funds. On the other hand, operational management of existing capacities, including resource supply and loading of capacities, takes place in the ``fast timescale''.

In the framework of the Houthakker-Johansen model an industry produces a homogeneous output using $n$ types of current use production factors (CUPF for short). Creation of new capacity or conversion of an existing one is related to the choice of production technology. This choice is made in the ``slow timescale''.

It is supposed that micro-level techologies are Leontief-type: they do not admit substitution of inputs and each of them can be described by vector $x = (x_1,\dots,x_n)$ of input expenses per unit of output. In the ``fast timescale'' capacities are distributed over technologies and are described by a non-negative measure $\mu$ on the space of all possible technologies $\mathbb R^n_+$. For a given Borel subset $A \subseteq \mathbb R^n_+$ the value $\mu(A)$ is the total capacity of technologies contained in $A$. The total capacity of the industry, i.e. its maximal possible output, is equal to $\mu(\mathbb R^n_+)$. 

In order to fully load all the capacities, the industry requires
\begin{equation*}
  l_i^* = \int_{\mathbb R^n_+} x_i \, \mu(dx)
\end{equation*}
resources of type $i$. Given a vector of inputs $l = (l_1,\dots,l_n)$ for the industry such that $l_i < l_i^*$ for at least one $i \in \{1,\dots,n\}$, the problem is to distribute the inputs over technologies. To each technology $x$ we assign a number $u(x) \in [0,1]$, which describes the load coefficient for capacities corresponding to technology $x$. If $u(x) = 1$, the capacities corresponding to technology $x$ are used completely. If $u(x) = 0$, the capacities corresponding to technology $x$ are not used.

Consider the problem of optimal distribution of resources for maximization of the total output of the industry:
\begin{gather}
  \int_{\mathbb R^n_+} u(x) \, \mu(dx) \to \max_{u(x)},\label{hjm.prmax} \\
  \int_{\mathbb R^n_+} x_i u(x) \mu(dx) \leq l_i \quad (i=1,\dots,n) \label{hjm.prres}\\
  0 \leq u(x) \leq 1 \quad \text{$\mu$-almost everywhere}.\label{hjm.pru}
\end{gather}

\begin{lemma}[Generalized Neyman-Pearson lemma, see, e.g., \cite{Shan1984en}]\label{hjm.lemmaGNP} Let $\mu$ be a locally finite non-negative Borel measure in $\mathbb R^n_+$. The following statements are valid:
\begin{itemize}
 \item[(i)] If $l \geq 0$, then problem \eqref{hjm.prmax}--\eqref{hjm.pru} is solvable.
 \item[(ii)] If $u_0(x)$ is a solution to problem \eqref{hjm.prmax}--\eqref{hjm.pru}, then there exist Lagrange multipliers $p_0 \geq 0$, $p = (p_1,\dots,p_n) \geq 0$, $p_0+|p|>0$, such that
 \begin{gather}
	u_0(x) = \begin{cases}
				0, & \text{for $\mu$-almost all $x$ such that $p_0 < p \cdot x$},\\
				1, & \text{for $\mu$-almost all $x$ such that $p_0 > p \cdot x$},
	         \end{cases}\\
	p_i \biggl( l_i - \int_{\mathbb R^n_+} x_i u_0(x) \mu(dx) \biggr) = 0 \quad (i = 1,\dots,n).
 \end{gather}
 \item[(iii)] If
 \begin{equation*}
	p_0 > 0, \; p = (p_1,\dots,p_n) \geq 0, \; l = \int_{\mathbb R^n_+} x \theta(p_0 - p \cdot x) \mu(dx),
 \end{equation*}
 then $u(x) = \theta(p_0 - p \cdot x)$ is a solution of problem \eqref{hjm.prmax}--\eqref{hjm.pru}. Here and in what follows:
 \begin{equation}\label{hjm.thetadef}
	\theta(s) = \begin{cases}
				  1, & \text{if $s \geq 0$},\\
				  0, & \text{if $s < 0$}.
	            \end{cases}
 \end{equation}
\end{itemize}
\end{lemma}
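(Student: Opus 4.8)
\medskip
\noindent\textbf{Proof proposal.} I would prove the three parts in the order (iii), (i), (ii); all integrals below are over $\mathbb R^n_+$. Part (iii) is a direct verification: $u^*(x)=\theta(p_0-p\cdot x)$ satisfies $0\le u^*\le 1$ and $\int x_iu^*\,\mu(dx)=l_i$ by hypothesis, so it is feasible, and for any feasible $u$ one has the identity
\begin{equation*}
  p_0\int(u-u^*)\,\mu(dx)=\int(u-u^*)(p_0-p\cdot x)\,\mu(dx)+\sum_{i=1}^n p_i\Bigl(\int x_iu\,\mu(dx)-l_i\Bigr),
\end{equation*}
using $\int x_iu^*\,\mu(dx)=l_i$ (all integrals here are finite by the a priori bound established in part (i) below). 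The last sum is $\le 0$ since $p_i\ge 0$ and $\int x_iu\,\mu(dx)\le l_i$, and the integrand of the first term is $\le 0$ pointwise (where $p_0\ge p\cdot x$ we have $u^*=1\ge u$; where $p_0<p\cdot x$ we have $u^*=0\le u$); dividing by $p_0>0$ gives $\int u\,\mu(dx)\le\int u^*\,\mu(dx)$, so $u^*$ is optimal.

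For part (i) I would first record an a priori bound and a tightness estimate: splitting $\{|x|>R\}$ according to which coordinate is largest (on which $x_i\ge R/\sqrt n$), every feasible $u$ satisfies $\int_{\{|x|>R\}}u\,\mu(dx)\le\sqrt n\,(l_1+\dots+l_n)/R$, while $\int_{\{|x|\le R\}}u\,\mu(dx)\le\mu(\{|x|\le R\})<\infty$ by local finiteness; taking $R=1$ shows the supremum $M$ in \eqref{hjm.prmax}--\eqref{hjm.pru} is finite, and letting $R\to\infty$ shows that mass cannot escape to infinity. If $l_i=0$ for some $i$, feasibility forces $u=0$ $\mu$-a.e. on $\{x_i>0\}$, so one restricts $\mu$ to $\{x_i=0\}$ and drops coordinate $i$; after this reduction either $l>0$, or $\mu$ is carried by $\{0\}$ and $u\equiv1$ is trivially optimal. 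Assuming $l>0$: since $\mu$ is $\sigma$-finite, $\{0\le u\le 1\}$ is weak-$*$ sequentially compact in $L^\infty(\mu)=L^1(\mu)^*$, so a maximizing sequence has a weak-$*$ limit $u_0$ with $0\le u_0\le 1$; testing against $\mathbf 1_{\{|x|\le R\}}$ and $x_i\mathbf 1_{\{|x|\le R\}}$ (both in $L^1(\mu)$) and letting $R\to\infty$ shows that $u_0$ is feasible and, with the tightness estimate, that $\int u_0\,\mu(dx)=M$, so $u_0$ solves the problem.

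For part (ii), let $u_0$ be a solution, put $M=\int u_0\,\mu(dx)$, and let $V\subseteq\mathbb R^{n+1}$ be the convex set of all vectors $(\int u\,\mu(dx),\int x_1u\,\mu(dx),\dots,\int x_nu\,\mu(dx))$ with $0\le u\le 1$ and these integrals finite. Optimality of $u_0$ makes $V$ disjoint from the convex set $\{(t,v)\in\mathbb R^{n+1}:t>M,\ v\le l\}$, so a separating hyperplane in $\mathbb R^{n+1}$ provides a nonzero functional which, tested in the unbounded directions of the second set, must be of the form $(p_0,-p_1,\dots,-p_n)$ with $p_0\ge 0$ and $p\ge 0$, whence $p_0+|p|>0$. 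Evaluating the separating inequality on $V$ at $u=u_0$ and combining with feasibility of $u_0$ and $p\ge 0$ forces $\sum_{i=1}^n p_i\bigl(l_i-\int x_iu_0\,\mu(dx)\bigr)=0$ with nonnegative summands, hence the complementary slackness relations. The pointwise description of $u_0$ then follows by local perturbations: for $E=\{|x|\le R\}\cap\{p_0-p\cdot x>\delta\}\cap\{u_0\le 1-\tfrac1k\}$ (of finite $\mu$-measure by local finiteness), the competitor $u_0+\tfrac1k\mathbf 1_E$ lies in $V$, and the separating inequality — after using complementary slackness to identify its constant with the value of the functional at $u_0$ — reduces to $\int_E(p_0-p\cdot x)\,\mu(dx)\le 0$, forcing $\mu(E)=0$ since the integrand exceeds $\delta$ on $E$; exhausting $\{p_0>p\cdot x\}\cap\{u_0<1\}$ by countably many such $E$ gives $u_0=1$ $\mu$-a.e. there, and the symmetric perturbation $u_0-\tfrac1k\mathbf 1_E$ with $E\subseteq\{p_0<p\cdot x\}$ gives $u_0=0$ $\mu$-a.e. on $\{p_0<p\cdot x\}$.

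The step I expect to be the main obstacle is part (i): the feasible set is not weak-$*$ compact in any naive sense because $\mu$ may be an infinite measure, so the existence proof must marry weak-$*$ compactness on each bounded region $\{|x|\le R\}$ to the tightness furnished by the resource constraints with finite $l$ — and the reduction that eliminates the coordinates with $l_i=0$ is exactly what makes the hypothesis $l>0$, hence the tightness estimate, available. A subsidiary technical nuisance, in part (ii), is that $\int x_iu\,\mu(dx)$ can be infinite for a general admissible $u$, which is why the separation is set up on the set $V$ of admissible functions with finite moments and the pointwise characterization is recovered from finite-mass perturbations rather than from a single global variational inequality.
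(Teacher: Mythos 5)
The paper does not prove Lemma \ref{hjm.lemmaGNP} at all: it is quoted as a known result with a pointer to \cite{Shan1984en}, and none of the appendices contains an argument for it, so there is no in-paper proof to compare against. Your proposal is, as far as I can check, a correct and complete self-contained proof along the standard lines one would expect for this lemma: part (iii) by the direct verification $p_0\int(u-u^*)\,\mu(dx)\le 0$ using the sign decomposition of $(u-u^*)(p_0-p\cdot x)$ and complementary feasibility; part (i) by weak-$*$ compactness of $\{0\le u\le 1\}$ combined with the tightness bound $\int_{\{|x|>R\}}u\,\mu(dx)\le \sqrt n\,|l|_1/R$ extracted from the resource constraints; part (ii) by finite-dimensional separation of the convex moment set $V$ from $\{t>M,\ v\le l\}$, whose recession directions force the signs of the multipliers, followed by countably many finite-mass perturbations to get the pointwise description of $u_0$. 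Two small points of hygiene: sequential weak-$*$ compactness of the order interval in $L^\infty(\mu)=L^1(\mu)^*$ needs not just $\sigma$-finiteness but separability of $L^1(\mu)$ (which does hold here, since $\mu$ is a locally finite Borel measure on the $\sigma$-compact Polish space $\mathbb R^n_+$, but you should say so); and the reduction eliminating coordinates with $l_i=0$ in part (i) is not actually needed, since the tightness estimate and the limiting argument go through verbatim for any finite $l\ge 0$.
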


Lagrange multipliers $p_0 \geq 0$, $p = (p_1,\dots,p_n) \geq 0$ are interpreted as the unit prices of the output and resources, respectively. The generalized Neyman-Pearson lemma states that the optimal resource distribution mechanisms are equivalent to equilibrium market-type mechanisms: technologies which are profitable for prices $p_0 \geq 0$, $p = (p_1,\dots,p_n) \geq 0$ are used completely, unprofitable technologies are not used. Furthermore, prices $p_0 \geq 0$, $p = (p_1,\dots,p_n) \geq 0$ are determined by the condition of equilibrium of supply and demand on the market of CUPF.

The described model can be fruitfully used in analysis of real economic problems, see, e.g., \cite{Joh1972,Hildenbrand1981}. In particular, this model was used in investigation of the crisis of the Norwegian tanker fleet in 1960-ies, see \cite{Joh1972}.

\begin{definition}\label{hjm.Fdef} Function $F(l)$ which assigns to each $l = (l_1,\dots,l_n) \geq 0$ the optimal value of functional in the resource distribution problem \eqref{hjm.prmax}--\eqref{hjm.pru} is called \textit{production function} for this problem.
\end{definition}

\begin{definition}
Function $\Pi(p,p_0)$ given by
\begin{gather}
  \Pi(p,p_0) = \int_{\mathbb R^n_+} (p_0 - p \cdot x)_+ \mu(dx),  \label{hjm.Pidef}\\
  a_+ = \max\{a,0\}, \quad a \in \mathbb R, \label{hjm.a+}
\end{gather}
is called \textit{profit function} for the resource distribution problem \eqref{hjm.prmax}--\eqref{hjm.pru}.
\end{definition}

Production function $F(l)$ assigns to a given vector of inputs $l = (l_1,\dots,l_n)$ the the total output of the industry. It describes substitution of production factors at the macro-level. $F(l)$ has neoclassical properties: it is concave, non-decreasing and continuous on $\mathbb R^n_+$, see \cite{Shan1984en}.

The definition of profit function \eqref{hjm.Pidef} goes back to \cite{Cornwall1973,Shan1985en}. Properties of the profit function are studied in \cite{Shan1985en}. In particular, it is shown that the profit function is related to the production function by the following Legendre-Young-Fenchel type transform:
\begin{gather}
  \Pi(p,p_0) = \sup_{l \geq 0} ( p_0 F(l) - p \cdot l), \quad p_0 > 0, \; p = (p_1,\dots,p_n) \geq 0, \label{hjm.FtoPi}\\
  F(l) = \tfrac{1}{p_0} \inf_{p \geq 0} ( \Pi(p,p_0) + p \cdot l), \quad p_0>0, \; l=(l_1,\dots,l_n)\geq 0. \label{hjm.PitoF}
\end{gather}
Thus, the production function and the profit function contain the same macro-level information.  

\subsection{Connection between micro- and macro-descriptions of an industry and related inverse problems}\label{hjm.mic-mac}

In \cite{Houth1955} it is shown that the Cobb-Douglas production function
\begin{equation}\label{hjm.FCDdef}
  F_\text{CD}(l_1,l_2) = C l_1^{\frac{\alpha_1}{\alpha_1+\alpha_2+1}} l_2^{\frac{\alpha_2}{\alpha_1+\alpha_2+1}}, \quad C > 0, \; \alpha_1 \geq 1, \; \alpha_2 \geq 1,
\end{equation}
corresponds to distribution $\mu$ of capacities over technologies which is given by:
\begin{equation}\label{hjm.FCDconst}
 \begin{gathered}
	  \mu(dx) = A x_1^{\alpha_1-1} x_2^{\alpha_2-1} dx, \\
	  A = C^{\alpha_1+\alpha_2+1}  \tfrac{(\alpha_1+\alpha_2)\alpha_1^{\alpha_1} \alpha_2^{\alpha_2}}{(\alpha_1+\alpha_2+1)^{\alpha_1+\alpha_2}} \tfrac{1}{B(\alpha_1,\alpha_2)},
 \end{gathered}
\end{equation}
where $B(\cdot,\cdot)$ is the beta function. This example shows that even if there are no substitution at the micro-level, substitution can appear at the macro-level due to differences in loads of capacities corresponding to different micro-level technologies. This example also encourages one to take a critical look at the Cobb-Douglas production function which is very popular in econometric studies: the corresponding distribution of capacities has an asymptotic ``horn of plenty'' since it does not vanish in a neighborhood of zero.

This example motivates the study of connection between distributions $\mu(dx)$ of capacities over technologies and corresponding production functions $F(l)$. In view of \eqref{hjm.FtoPi}, \eqref{hjm.PitoF}, study of this relation is equivalent to investigation of operator \eqref{hjm.Pidef} and transforms \eqref{hjm.FtoPi}, \eqref{hjm.PitoF}.

Integral operator \eqref{hjm.Pidef} is related to Radon transform in $\mathbb R^n_+$:
\begin{equation}\label{hjm.PitoRad}
  \frac{\partial^2 \Pi(p,p_0)}{\partial p_0^2} = \int_{p \cdot x = p_0} \mu(dx), \quad p_0 > 0, \; p = (p_1,\dots,p_n)\geq 0,
\end{equation}
and, as a corollary, to classical Fourier-Laplace and Cauchy-Fantappi\`e transforms, see \cite{Shan1985en,Henkin1990a,Henkin1990b,Henkin1991}. In particular,
\begin{equation}\label{hjm.RtoL}
  \int_{\mathbb R^n_+} e^{-s \cdot x} \mu(dx) = \int_0^{+\infty} e^{-\tau} d_\tau \bigl( \tfrac{\partial \Pi(s,\tau)}{\partial \tau} \bigr), \quad s = (s_1,\dots,s_n) \geq 0.
\end{equation}
Relation \eqref{hjm.RtoL} allows to study the questions of inversion and range characterization for operator \eqref{hjm.Pidef}.

\begin{theorem}[see \cite{Shan1985en,Henkin1990a}]\label{hjm.thmuni} Let $\mu$ be a signed Borel measure in $\mathbb R^n_+$ satisfying
\begin{equation}\label{hjm.muest}
  \int_{\mathbb R^n_+} e^{-A|x|} |\mu(dx)|< \infty \quad \text{for some $A > 0$}.
\end{equation}
Suppose that there exists an open non-empty cone $K \subseteq \mathbb R^n_+$ such that
\begin{equation}
  \int_{\mathbb R^n_+} (p_0 - p\cdot x)_+ \mu(dx) = 0 \quad \text{for all $p_0 > 0$, $p \in K$}.
\end{equation}
Then $\mu = 0$. 
\end{theorem}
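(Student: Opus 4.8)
The plan is to reduce the statement to a classical uniqueness property of the Laplace transform, using relation \eqref{hjm.RtoL} as the bridge between the profit functional and the Laplace transform of $\mu$. First I would record an elementary reduction: a nonempty open subset of the closed orthant is automatically contained in the open orthant, and $K$ is a cone, so after replacing $K$ by $K\cap\{p\in\mathbb R^n:\min_i p_i>A\}$ (still a nonempty open cone) we may assume $K\subseteq\{p:\min_i p_i>A\}$. For $s\in K$ the hypothesis reads $\Pi(s,\tau)=\int_{\mathbb R^n_+}(\tau-s\cdot x)_+\,\mu(dx)=0$ for every $\tau>0$, so the right-hand side of \eqref{hjm.RtoL} vanishes identically and therefore the Laplace transform $\widehat\mu(s):=\int_{\mathbb R^n_+}e^{-s\cdot x}\,\mu(dx)$ vanishes for all $s\in K$.

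Next I would invoke analyticity. By \eqref{hjm.muest} and the inequality $x_1+\dots+x_n\ge|x|$ valid on $\mathbb R^n_+$, we have $|e^{-s\cdot x}|\le e^{-A|x|}$ whenever $\min_i\mathrm{Re}\,s_i\ge A$, so $\widehat\mu$ converges absolutely and is holomorphic on the connected tube $\{s\in\mathbb C^n:\min_i\mathrm{Re}\,s_i>A\}$; in particular its restriction to the connected open set $D:=\{p\in\mathbb R^n:\min_i p_i>A\}$ is real-analytic. Since $K\subseteq D$ is nonempty and open, the identity principle for real-analytic functions on the connected set $D$ (equivalently, the several-variable identity theorem on the tube) forces $\widehat\mu\equiv 0$ on $D$, and in particular $\widehat\mu(k+2A\mathbf 1)=0$ for every multi-index $k\in\mathbb Z^n_{\ge 0}$, where $\mathbf 1=(1,\dots,1)$.

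Finally I would conclude by a moment argument. Set $\nu(dx):=e^{-2A(x_1+\dots+x_n)}\mu(dx)$; by \eqref{hjm.muest} (again using $x_1+\dots+x_n\ge|x|$) this is a finite signed Borel measure on $\mathbb R^n_+$, and $\int_{\mathbb R^n_+}e^{-k\cdot x}\,\nu(dx)=\widehat\mu(k+2A\mathbf 1)=0$ for all $k\in\mathbb Z^n_{\ge 0}$. Pushing $\nu$ forward under the homeomorphism $x\mapsto(e^{-x_1},\dots,e^{-x_n})$ of $\mathbb R^n_+$ onto $(0,1]^n$ produces a finite signed measure on $[0,1]^n$ all of whose moments vanish; by the Weierstrass approximation theorem together with the Riesz representation theorem this measure is zero, hence $\nu=0$ and, since $e^{-2A(x_1+\dots+x_n)}>0$, $\mu=0$.

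As for the main obstacle: each of the three steps is essentially standard, so the only point genuinely requiring care is the very first one — the passage from $\Pi(s,\cdot)\equiv 0$ to the vanishing of the Stieltjes integral on the right of \eqref{hjm.RtoL}. This is legitimate because, by \eqref{hjm.PitoRad}, $\partial_\tau\Pi(s,\tau)$ has bounded variation in $\tau$ and its differential $d_\tau(\partial_\tau\Pi(s,\tau))$ is exactly the Radon-transform measure of $\mu$ over the hyperplanes $\{s\cdot x=\tau\}$; once $\Pi(s,\cdot)\equiv 0$ that measure is the zero measure and the integral is trivially $0$. The only other thing to keep track of is that the domain on which $\widehat\mu$ is holomorphic is connected and meets $K$, which is precisely what the cone structure of $K$ guarantees.
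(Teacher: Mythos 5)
Your argument is correct, and it follows exactly the route the paper itself signposts: the paper does not reproduce a proof of Theorem~\ref{hjm.thmuni} (it refers to \cite{Shan1985en,Henkin1990a}), but the reduction via \eqref{hjm.RtoL} to the vanishing of the Laplace transform of $\mu$ on an open set, followed by analytic continuation on the tube where \eqref{hjm.muest} guarantees holomorphy and a moment/Weierstrass argument, is precisely the intended mechanism. One cosmetic slip: $K\cap\{p:\min_i p_i>A\}$ is a nonempty open set but not a cone; since all you need is a nonempty open subset of the connected domain $\{\min_i p_i>A\}$ on which $\widehat\mu$ vanishes, this does not affect the proof.
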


\begin{theorem}[see \cite{Henkin1990a}]\label{hjm.thmchar} Function $\Pi(p,p_0)$ can be represented in the form
\begin{equation}
  \Pi(p,p_0) = \int_{\mathbb R^n_+} (p_0 - p \cdot x)_+ \mu(dx), \quad (p,p_0) \in \mathbb R^{n+1}_+,
\end{equation}
where $\mu$ is a non-negative Borel measure in $\mathbb R^n_+$ satisfying \eqref{hjm.muest}, if and only if
\begin{enumerate}
 \item[(i)] $\Pi(p,p_0) = p_0 \Pi(\tfrac{p}{p_0},1)$ for $(p,p_0) \in \mathbb R^{n+1}_+$, $\Pi(p,p_0)$ is convex in $\mathbb R^{n+1}_+$, and for fixed $p \in \mathbb R^n_+$ the measure $\tfrac{\partial^2 \Pi(p,\tau)}{\partial \tau^2}$ decays exponentially as $\tau \to + \infty$.
 \item[(ii)] Put
 \begin{equation}
	G(s) = \int_0^{+\infty} e^{-\tau} d_\tau \bigl( \tfrac{\partial \Pi(s,\tau)}{\partial \tau} \bigr).
 \end{equation}
Then $G(s) \in C^\infty(\mathbb R^n_+)$ and the following inequalities hold:
  \begin{equation}\label{hjm.Lapchar}
   \begin{gathered}
	  (-1)^k D_{\xi^1} \cdots D_{\xi^k} G(\lambda s) \geq 0\\
	  \text{for some open cone $\Gamma \in \mathop{\mathrm{int}} \mathbb R^n_+$ and some $s \in \Gamma$},\\
	  \text{for any $\lambda > 0$, any $\xi^1$, \dots, $\xi^k \in \Gamma$ and any $k \geq 1$},\\
	  \text{where $D_\xi = \sum\nolimits_j \xi_j \tfrac{\partial}{\partial s_j}$, $\xi = (\xi_1,\dots,\xi_n)$}.
   \end{gathered}
  \end{equation}
\end{enumerate}
\end{theorem}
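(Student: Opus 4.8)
The plan is to prove the two implications separately; \emph{necessity} is routine. If $\Pi$ is given by the stated integral, then positive $1$-homogeneity in $(p,p_0)$ and joint convexity on $\mathbb R^{n+1}_+$ are immediate, since the integrand $(p_0-p\cdot x)_+$ has both properties pointwise in $x$ and integration preserves them. The Radon-transform identity \eqref{hjm.PitoRad} identifies $\partial_\tau^2\Pi(p,\tau)$ with the push-forward of $\mu$ under $x\mapsto p\cdot x$, so its exponential-type decay in $\tau$ is inherited from \eqref{hjm.muest}, which also makes $G$ well defined. By \eqref{hjm.RtoL} one has $G(s)=\int_{\mathbb R^n_+}e^{-s\cdot x}\mu(dx)$, and differentiating under the integral sign — legitimate on $\mathop{\mathrm{int}}\mathbb R^n_+$ by \eqref{hjm.muest} — gives $G\in C^\infty$ together with
\[
  (-1)^k D_{\xi^1}\cdots D_{\xi^k}G(\lambda s)=\int_{\mathbb R^n_+}(\xi^1\cdot x)\cdots(\xi^k\cdot x)\,e^{-\lambda s\cdot x}\,\mu(dx)\ \geq\ 0
\]
whenever $\xi^1,\dots,\xi^k\in\mathbb R^n_+$, in particular for directions from any sub-cone $\Gamma$; this is \eqref{hjm.Lapchar}.

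For \emph{sufficiency}, given (i)--(ii), define $G$ by the formula in (ii). The inequalities \eqref{hjm.Lapchar} say precisely that along each ray $\lambda\mapsto G(\lambda s)$, $s\in\Gamma$, the function is completely monotone and all its directional derivatives in directions taken from $\Gamma$ alternate in sign. The main step — and the expected obstacle — is to turn this into a genuine representation: I would invoke the local range characterization of the Laplace transform of \cite{Henkin1990a}, which combines the classical Bernstein--Widder theorem on completely monotone functions of one variable with a multidimensional gluing of the one-dimensional representing measures by means of separate real-analyticity, to produce a non-negative Borel measure $\mu$ on $\mathbb R^n_+$ with $G(s)=\int_{\mathbb R^n_+}e^{-s\cdot x}\mu(dx)$, first near $\Gamma$ and then on all of $\mathop{\mathrm{int}}\mathbb R^n_+$ by analytic continuation (the right-hand side being real-analytic, indeed holomorphic on a tube domain, wherever finite). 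The bound \eqref{hjm.muest} then follows for free: for a fixed $s_0\in\Gamma$ one has $\int_{\mathbb R^n_+}e^{-s_0\cdot x}\mu(dx)=G(s_0)<\infty$ and $s_0\cdot x\geq(\min_i (s_0)_i)\,|x|$ with $\min_i(s_0)_i>0$. The difficulty is exactly the passage from the local sign conditions along one ray to a single non-negative measure supported in the whole cone $\mathbb R^n_+$: Bernstein's one-variable theorem alone only produces, for each ray, a measure on a half-line, and assembling these while controlling support and growth is what the separate-analyticity argument in \cite{Henkin1990a} is for, so in the write-up this step reduces to quoting that result and verifying its hypotheses against \eqref{hjm.Lapchar}.

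Finally, \emph{closing the loop}: one must check that the measure $\mu$ just produced actually reproduces $\Pi$. Put $\tilde\Pi(p,p_0):=\int_{\mathbb R^n_+}(p_0-p\cdot x)_+\mu(dx)$; by the necessity part, $\tilde\Pi$ satisfies (i)--(ii) and its associated transform is the same $G$. Using the homogeneity in (i) and the rescaling $\Pi(\lambda p,\lambda\tau)=\lambda\Pi(p,\tau)$ one checks that $\lambda\mapsto G(\lambda p)$ is precisely the one-variable Laplace transform of the measure $\partial_\tau^2\Pi(p,\tau)\,d\tau$ on $[0,\infty)$; since $G=G_{\tilde\Pi}$ and the one-variable Laplace transform is injective on measures, $\partial_\tau^2\Pi(p,\cdot)=\partial_\tau^2\tilde\Pi(p,\cdot)$, so $\Pi-\tilde\Pi$ is affine in $p_0$, and homogeneity together with the normalization carried by (i) forces $\Pi\equiv\tilde\Pi$. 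Alternatively, one may simply appeal to the uniqueness Theorem \ref{hjm.thmuni}, which ensures that at most one measure can represent a given $\Pi$, so that only the existence established above is really needed.
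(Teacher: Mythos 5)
Your proposal is correct and follows essentially the same route as the paper: the paper gives no proof of Theorem \ref{hjm.thmchar} beyond citing \cite{Henkin1990a} and remarking that the argument rests on generalizations of Bernstein's theorems on completely monotone functions and on separate analyticity, which is precisely the machinery you invoke for the sufficiency direction. Your routine necessity computation via \eqref{hjm.RtoL} and the closing uniqueness step via Theorem \ref{hjm.thmuni} are consistent with this.
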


The proof of the range characterization theorem \ref{hjm.thmchar} is based on generalizations of Bernstein's theorems on completely monotone functions and separate analyticity.

Theorem \ref{hjm.thmchar} allows to determine which substitution of resources at the macro-level is compatible with the Houthakker-Johansen model.

\begin{example}[See \cite{Henkin1990a} for details] Consider the following constant elasticity of substitution production function (CES for short):
\begin{equation}\label{hjm.FCES}
  \begin{gathered}
  F_\text{CES}(l_1,l_2) = (\alpha_1 l_1^{-\rho} + \alpha_2 l_2^{-\rho})^{-\frac \gamma \rho}, \\
  \alpha_1 > 0, \; \alpha_2 > 0, \; \rho \in [-1,0)\cup(0,+\infty), \; 0 < \gamma < 1.
  \end{gathered}
\end{equation}
The corresponding profit function, calculated using formula \eqref{hjm.FtoPi}, is given by
\begin{equation}\label{hjm.PiCES}
  \Pi_\text{CES}(p_1,p_2,p_0) = \gamma^{\frac{\gamma}{1-\gamma}} (1-\gamma) p_0^{\frac{1}{1-\gamma}} \bigl( \alpha_1^{\frac{1}{1+\rho}} p_1^{\frac{\rho}{1+\rho}} + \alpha_2^{\frac{1}{1+\rho}} p_2^{\frac{\rho}{1+\rho}} \bigr)^{\frac{-\gamma(1+\rho)}{\rho(1-\gamma)}}.
\end{equation}
It is shown in \cite{Henkin1990a} that the range characterization conditions of Theorem \ref{hjm.thmchar} are fulfilled for function $\Pi_\text{CES}$ of \eqref{hjm.PiCES} for $\rho \in (-1,0)\cup(0,+\infty)$ and are not fulfilled for $\rho=-1$. Thus, CES production functions \eqref{hjm.FCES} are compatible with the Houthakker-Johansen model for $\rho \in (-1,0)\cup(0,+\infty)$. Also note that it was shown in \cite{Shan1984en} that function \eqref{hjm.FCES} is not compatible with the Houthakker-Johansen model for $\gamma = 1$.
\end{example}

Besides, Theorem \ref{hjm.thmchar} allows to investigate universality of the Houthakker-Johansen model from the point of view of aggregation, see Subsection \ref{agr.uni}.

\section{Aggregation and the inverse problem}\label{sec.agr}
\subsection{Resource distribution problem for a group of industries and aggregation}\label{agr.pro}
Consider a group of $m$ industries, where the $i$-th industry produces a homogeneous product of $i$-th type. These industries are interconnected by the mutual resource supply: the output of the $i$-th industry is used by other industries as a production factor. We set:
\begin{equation}
\begin{gathered}
  \text{$X^j_i$: amount of the $i$-th product used by the $j$-th industry},\\
  X^j = (X_1^j,\dots,X_m^j),\\
  \text{$l^j  = (l_1^j,\dots,l_n^j)$: vector of primary resources used by the $j$-th industry.}
\end{gathered}
\end{equation}
Let $F_j(X^j,l^j)$ be the production function for the $j$-th industry, that is, the function that relates the inputs to the output. This function is supposed to be neoclassical: concave, non-decreasing in each variable, continuous in $\mathbb R^n_+$ and vanishing at the origin.

We also put:
\begin{equation}
  \begin{gathered}
  \text{$X^0_i$: amount of the $i$-th product delivered to end consumers},\\
  X^0 = (X_1^0,\dots,X_m^0).
  \end{gathered}
\end{equation}
The demand of end consumers is described by utility function $F_0(X^0)$. It is supposed that $F_0(X^0)$ is positive homogeneous of degree one, concave, continuous in $\mathbb R^m_+$ and positive in $\mathop{\mathrm{int}}\mathbb R^n_+$. For the economic content of these assumptions, see \cite{Shananin2009}.

The primary resources are supposed to be limited by volumes $l = (l_1,\dots,l_n) \geq 0$. Consider the following resource distribution problem for maximization of utility of the end consumers, taking into account the inter-industrial balance of resources:
\begin{gather}
  F_0(X^0) \to \max, \label{agr.prmax}\\
  F_j(X^j,l^j) \geq X^0_j + X^1_j + \cdots + X^m_j \quad (j=1,\dots,m), \label{agr.prbal}\\
  l^1 + \cdots + l^m \leq l, \label{agr.prres}\\
  \begin{gathered}\label{agr.prpos}
  X^0 \geq 0, \; X^1 \geq 0, \; \dots, \; X^m \geq 0, \\
  l^1 \geq 0, \; \dots, \; l^m \geq 0.
  \end{gathered}
\end{gather}
It is supposed that the considered group of industries is \textit{productive}: 
\begin{equation}\label{agr.prod}
  \begin{gathered}
 \text{there exist $\widehat X^1 \geq 0$, \dots, $\widehat X^m \geq 0$, $\widehat l^1 \geq 0$, \dots, $\widehat l^m \geq 0$}\\
 \text{such that $F_j(\widehat X^j,\widehat l^j) > \widehat X^0_j + \cdots + \widehat X^m_j$, $j = 1,\dots,m$}.
 \end{gathered}
\end{equation}

\begin{proposition}\label{agr.propsol} A set of vectors $\{\widehat X^0,\widehat X^1,\dots,\widehat X^m, \widehat l^1,\dots,\widehat l^m\}$ satisfying \eqref{agr.prbal}--\eqref{agr.prod} is a solution of optimization problem \eqref{agr.prmax}--\eqref{agr.prpos} if and only if there exist Lagrange multipliers $p_0>0$, $q = (q_1,\dots,q_m)\geq 0$, $s = (s_1,\dots,s_n) \geq 0$, such that
\begin{gather}
  (\widehat X^j,\widehat l^j) \in \Arg \max \bigl\{ q_j F_j(X^j,l^j) - q \cdot X^j - s \cdot l^j \mid X^j \geq 0, \; l^j \geq 0 \bigr\}, \;\; j \geq 1, \label{agr.argmaxcond}\\
  q_j \bigl( F_j(\widehat X^j,\widehat l^j) - \widehat X^0_j - \widehat X^1_j - \cdots - \widehat X^m_j \bigr) = 0 \quad (j=1,\dots,m), \label{agr.qequ} \\
  s_k \bigl( l_k - \widehat l^1_k - \cdots - \widehat l^m_k \bigr) = 0 \quad ( k = 1,\dots,n), \label{agr.sequ}\\
  \widehat X^0 \in \Arg\max \bigl\{ p_0 F_0(X^0) - q \cdot X^0 \mid X^0 \geq 0 \bigr\}. \label{agr.argmaxcond2}
\end{gather}
\end{proposition}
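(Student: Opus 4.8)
The plan is to recognize \eqref{agr.prmax}--\eqref{agr.prpos} as a convex optimization problem with a concave objective and convex feasible set, and to apply the Karush-Kuhn-Tucker theorem, using the productivity assumption \eqref{agr.prod} as a Slater-type constraint qualification. First I would observe that, since each $F_j$ is concave and non-decreasing in each variable, the constraint functions $X^0_j + X^1_j + \cdots + X^m_j - F_j(X^j, l^j)$ appearing in \eqref{agr.prbal} are convex, and the constraints \eqref{agr.prres}, \eqref{agr.prpos} are linear; together with concavity of the objective $F_0$, this makes the whole problem a convex program. The productivity hypothesis \eqref{agr.prod} provides a point (after scaling down the $\widehat l^j$ so that \eqref{agr.prres} holds, which is possible because the $F_j$ are non-decreasing and continuous and vanish at the origin, so one can interpolate) at which the balance constraints \eqref{agr.prbal} hold with strict inequality; this is precisely Slater's condition for the nonlinear constraints.

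Next I would write down the Lagrangian
\begin{equation*}
  \mathcal L = p_0 F_0(X^0) + \sum_{j=1}^m q_j \bigl( F_j(X^j,l^j) - X^0_j - X^1_j - \cdots - X^m_j \bigr) + \sum_{k=1}^n s_k \bigl( l_k - l^1_k - \cdots - l^m_k \bigr),
\end{equation*}
with multipliers $p_0 \geq 0$ for the objective (normalization), $q = (q_1,\dots,q_m) \geq 0$ for \eqref{agr.prbal}, and $s = (s_1,\dots,s_n) \geq 0$ for \eqref{agr.prres}, the sign constraints \eqref{agr.prpos} being kept explicitly as the domain of maximization. By the KKT theorem for convex programs with Slater's condition, a feasible point is optimal if and only if there exist such multipliers for which the point maximizes $\mathcal L$ over the nonnegative orthant and the complementary slackness conditions hold. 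The key step is then to check that $p_0 > 0$: if $p_0 = 0$, the Lagrangian contains no term rewarding output to end consumers, and one argues — again using productivity \eqref{agr.prod} and the non-decreasing property of the $F_j$ — that the maximum of $\mathcal L$ would then be unbounded or force $q = 0$, $s = 0$, contradicting the nontriviality of the multipliers; hence $p_0$ can be normalized to a positive value. Finally, since $\mathcal L$ separates as a sum of terms each involving only one block of variables — the $X^0$-term giving \eqref{agr.argmaxcond2}, and for each $j \geq 1$ the terms in $(X^j, l^j)$ giving $q_j F_j(X^j,l^j) - q \cdot X^j - s \cdot l^j$, hence \eqref{agr.argmaxcond} — maximizing $\mathcal L$ jointly is equivalent to maximizing each block separately, and the complementary slackness conditions read exactly as \eqref{agr.qequ} and \eqref{agr.sequ}.

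The main obstacle I anticipate is the verification of the constraint qualification and, relatedly, the argument that $p_0 > 0$: the productivity assumption is stated as existence of vectors with strict balance inequalities but with no control over \eqref{agr.prres}, so one must first produce a genuinely Slater-feasible point by scaling, invoking continuity and monotonicity of the $F_j$ together with $F_j(0,0)=0$; and one must then rule out the degenerate multiplier configuration. A secondary technical point is that the $F_j$ need not be differentiable, so the stationarity conditions must be phrased in the subdifferential / ``$\Arg\max$'' form as in the statement rather than via gradients, which is exactly why the conclusion is written with $\Arg\max$ in \eqref{agr.argmaxcond}, \eqref{agr.argmaxcond2}; the equivalence of the blockwise $\Arg\max$ conditions with global maximization of $\mathcal L$ is then routine given the separable structure.
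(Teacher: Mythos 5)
Your proposal is correct and follows essentially the same route as the paper: write the Lagrangian for the convex program \eqref{agr.prmax}--\eqref{agr.prpos}, invoke the Kuhn--Tucker theorem under Slater's condition (guaranteed by productivity \eqref{agr.prod}), and use the block-separable form of the Lagrangian to read off \eqref{agr.argmaxcond}--\eqref{agr.argmaxcond2}. The extra details you supply on constructing a Slater point by scaling and on ruling out $p_0=0$ are precisely the steps the paper compresses into ``one can show''.
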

The proof of Proposition \ref{agr.propsol} is given in Appendix \ref{ape.agr}.

Lagrange multipliers $q = (q_1,\dots,q_m)$ corresponding to balance constraints on products of industries are interpreted as prices of these products. Lagrange multipliers $s = (s_1,\dots,s_n)$ corresponding to balance constraints on primary resources are interpreted as prices of these resources. 

The \textit{profit function} of the $j$-th industry is defined as
\begin{equation}
  \Pi_j(q,s) = \sup \bigl\{ q_j F_j(X^j,l^j) - q \cdot X^j - s \cdot l^j \mid X^j \geq 0, \; l^j \geq 0 \bigr\}.
\end{equation}
Formula \eqref{agr.argmaxcond} means that the product supply and resource demand of the industries are determined by the condition of profit maximization for prices $(q,s)$.

Formula \eqref{agr.argmaxcond2} describes the demand of a representative rational end consumer with utility function $F_0(X^0)$ for prices $q$ of products. Furthermore, we have that $p_0 = q_0(q)$, where $q_0(q)$ is the Young transform of function $F_0(X^0)$:
\begin{equation}
  q_0(q) = \inf \bigl\{ \tfrac{q \cdot X^0}{F_0(X^0)} \mid X^0 \geq 0, \;F_0(X^0) >0 \bigr\},
\end{equation}
see \cite{Shananin2009} for more details.

It follows from formulas \eqref{agr.prbal}, \eqref{agr.prres}, \eqref{agr.qequ}, \eqref{agr.sequ} that $q = (q_1,\dots,q_m)$ and $s = (s_1,\dots,s_n)$ are equilibrium prices. Thus, the optimal resource distribution mechanisms are equivalent to market-type equilibrium mechanisms. 

\begin{proposition}[Extremal principle]\label{agr.propextr} Lagrange multipliers $(q,s)$ for optimization problem \eqref{agr.prmax}--\eqref{agr.prpos} are solutions of the following convex programming problem:
\begin{equation}\label{agr.prPimax}
  \begin{gathered}
	\Pi_1(q,s)+\cdots+\Pi_m(q,s) \to \min_{q,s}, \\
	q_0(q) \geq p_0, \quad q \geq 0.
	\end{gathered}
\end{equation}
\end{proposition}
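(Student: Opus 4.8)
My plan is to prove Proposition~\ref{agr.propextr} by recognizing problem~\eqref{agr.prPimax} as the Lagrangian dual of the resource-distribution problem~\eqref{agr.prmax}--\eqref{agr.prpos} (with the cost $s\cdot l$ of the primary resources included in the objective), and then checking that the multipliers $(q,s)$ furnished by Proposition~\ref{agr.propsol} are simultaneously feasible for, and optimal in, that dual. The argument is of ``weak duality plus exact attainment'' type, so no separate constraint qualification is needed --- productivity~\eqref{agr.prod} is only used for the existence of a primal solution, which is already granted.

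First I would record the structural facts, which are routine: each $\Pi_j(q,s)$ is a pointwise supremum of functions affine in $(q,s)$, hence convex and positively homogeneous of degree one in $(q,s)$, with effective domain contained in $\{s\ge 0\}$; thus $\Pi_1+\cdots+\Pi_m$ is convex, and since the Young transform $q_0$ is concave (a pointwise infimum of linear functions of $q$), problem~\eqref{agr.prPimax} is a convex program. For feasibility of $(q,s)$ I would use~\eqref{agr.argmaxcond2}: because $F_0$ is positively homogeneous of degree one, the maximum of $p_0F_0(X^0)-q\cdot X^0$ over $X^0\ge 0$ is either $0$ or $+\infty$, and being attained at $\widehat X^0$ it equals $0$; hence $p_0F_0(X^0)\le q\cdot X^0$ for all $X^0\ge 0$, i.e.\ $q_0(q)\ge p_0$, and moreover $q\cdot\widehat X^0=p_0F_0(\widehat X^0)$ --- a value I will reuse.

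For weak duality I would take an arbitrary feasible point $(q',s')$, substitute the admissible (by~\eqref{agr.prpos}) vectors $(\widehat X^j,\widehat l^j)$ into the supremum defining $\Pi_j(q',s')$, bound $F_j(\widehat X^j,\widehat l^j)$ below by $\widehat X^0_j+\widehat X^1_j+\cdots+\widehat X^m_j$ using~\eqref{agr.prbal} and $q'_j\ge 0$, and sum over $j$. The heart of the computation is the inter-industrial bookkeeping: the double sum $\sum_j q'_j\sum_k\widehat X^k_j$ regroups as $\sum_k q'\cdot\widehat X^k$ and cancels against the $\sum_j q'\cdot\widehat X^j$ coming from the $-q'\cdot X^j$ terms of the profit functions, so that $\sum_j\Pi_j(q',s')\ge q'\cdot\widehat X^0-s'\cdot\sum_j\widehat l^j$. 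Since the dual objective also carries the term $s'\cdot l$, and $\sum_j\widehat l^j\le l$ by~\eqref{agr.prres} with $s'\ge 0$, this yields $\sum_j\Pi_j(q',s')+s'\cdot l\ge q'\cdot\widehat X^0\ge q_0(q')F_0(\widehat X^0)\ge p_0F_0(\widehat X^0)$, a lower bound independent of $(q',s')$.

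Finally, for exact attainment I would rerun the same computation with $(q,s)$ in place of $(q',s')$: now \eqref{agr.argmaxcond} makes the substitution into the suprema exact, \eqref{agr.qequ} upgrades~\eqref{agr.prbal} to an equality in every term of $\sum_j q_jF_j(\widehat X^j,\widehat l^j)$, and \eqref{agr.sequ} gives $s\cdot\sum_j\widehat l^j=s\cdot l$; combined with $q\cdot\widehat X^0=p_0F_0(\widehat X^0)$ this shows the dual objective at $(q,s)$ equals exactly $p_0F_0(\widehat X^0)$, meeting the lower bound. Hence $(q,s)$ minimizes~\eqref{agr.prPimax}. I expect the only genuinely delicate point to be the handling of the homogeneity of $F_0$: it is what ties the multiplier $p_0$ to the Young transform $q_0(q)$ and to the attained value, and it is also where one must separately dispose of the degenerate case $F_0(\widehat X^0)=0$ (there the inequality $q'\cdot\widehat X^0\ge p_0F_0(\widehat X^0)$ reduces to the trivial $q'\cdot\widehat X^0\ge 0$); everything else is the routine, if slightly intricate, regrouping of the inter-industrial terms.
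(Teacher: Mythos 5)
Your proof is correct, but it takes a genuinely different route from the paper's. The paper disposes of Proposition \ref{agr.propextr} in one line, by comparing conditions \eqref{agr.argmaxcond}--\eqref{agr.argmaxcond2} with the Kuhn--Tucker optimality conditions for problem \eqref{agr.prPimax} and deferring the details to the references; that route requires a constraint qualification and a subdifferential analysis of the dual program. You instead give a self-contained ``weak duality plus exact attainment'' argument: the bound $\sum_j\Pi_j(q',s')+s'\cdot l\ge p_0F_0(\widehat X^0)$ for every feasible $(q',s')$ uses only primal feasibility \eqref{agr.prbal}--\eqref{agr.prpos} and the regrouping of the inter-industrial terms, and equality at $(q,s)$ follows from \eqref{agr.argmaxcond}--\eqref{agr.argmaxcond2}; no constraint qualification for the dual is needed, and you obtain strong duality (zero gap) as a byproduct. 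Your treatment of the homogeneity of $F_0$ --- forcing the attained supremum in \eqref{agr.argmaxcond2} to vanish, hence $q_0(q)\ge p_0$ and $q\cdot\widehat X^0=p_0F_0(\widehat X^0)$ --- and of the degenerate case $F_0(\widehat X^0)=0$ is exactly right. One point to flag explicitly: you prove optimality for the objective $s\cdot l+\sum_j\Pi_j(q,s)$, whereas \eqref{agr.prPimax} as printed omits the term $s\cdot l$. Your reading is the correct one --- it agrees with \eqref{agr.PitoF2} in the paper's proof of Proposition \ref{agr.propdual}, and without the $s\cdot l$ term the minimization over $s$ in \eqref{agr.prPimax} would be vacuous, since each $\Pi_j$ is non-increasing in $s$ --- but since you are thereby proving a statement slightly different from the one printed, the discrepancy deserves an explicit remark rather than only the passing mention you give it.
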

Proposition \ref{agr.propextr} follows from comparison of conditions \eqref{agr.prbal}--\eqref{agr.prpos}, \eqref{agr.argmaxcond}--\eqref{agr.argmaxcond2} with necessary and sufficient optimality conditions for problem \eqref{agr.prPimax}, obtained using the Kuhn-Tucker theorem, see \cite{Shananin1997enb,Shananin1999b} for more details.

\begin{definition} Function $F^A(l)$ which assigns to each $l=(l_1,\dots,l_n)\geq 0$ the optimal value of functional in optimization problem \eqref{agr.prmax}--\eqref{agr.prpos} is called the \textit{aggregate production function} for the resource distribution problem \eqref{agr.prmax}--\eqref{agr.prpos}.
\end{definition}

\begin{definition} Function $\Pi^A(s,p_0)$ given by
\begin{equation}\label{agr.Pidef}
  \Pi^A(s,p_0) = \min \bigl\{ \Pi_1(q,s)+\cdots+\Pi_m(q,s) \mid q_0(q) \geq p_0, \; q \geq 0 \bigr\}
\end{equation}
is called the \textit{aggregate profit function} for the resource distribution problem \eqref{agr.prmax}--\eqref{agr.prpos}.
\end{definition}

In view of Proposition \ref{agr.propextr}, function $\Pi^A(s,p_0)$ assigns to each vector of unit prices of primary resources $s = (s_1,\dots,s_n) \geq 0$ and to \textit{aggregate product's consumer price index} $p_0>0$ the total profit of the group of industries in conditions of equilibrium.

\begin{proposition}\label{agr.propdual}
  The following formulas hold:
  \begin{align}
	F^A(l) & = \tfrac{1}{p_0} \inf_{s \geq 0} \bigl( \Pi^A(s,p_0) + s \cdot l \bigr), \quad p_0>0, \; l = (l_1,\dots,l_n)\geq 0, \label{agr.PitoF}\\
	\Pi^A(s,p_0) & = \sup_{l \geq 0} \bigl( p_0 F^A(l) - s \cdot l \bigr), \quad p_0 > 0, \; s = (s_1,\dots,s_n)\geq 0. \label{agr.FtoPi}
  \end{align}
\end{proposition}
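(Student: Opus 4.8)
The plan is to realise both identities as instances of Lagrangian duality for the convex program \eqref{agr.prmax}--\eqref{agr.prpos}, using the productivity hypothesis \eqref{agr.prod} as a Slater point and the positive homogeneity of $F_0$ to evaluate a partial conjugate. Throughout, $p_0>0$ is fixed.

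First I would introduce the auxiliary value
\[
  \psi(s)=\sup\bigl\{\,p_0F_0(X^0)-s\cdot(l^1+\dots+l^m)\ \big|\ \text{\eqref{agr.prbal} holds and } X^0,\dots,X^m,l^1,\dots,l^m\ge 0\,\bigr\},\quad s\ge 0,
\]
and compute it by dualising the $m$ balance constraints \eqref{agr.prbal} with multipliers $q=(q_1,\dots,q_m)\ge0$. Regrouping exactly as in the derivation of \eqref{agr.argmaxcond}--\eqref{agr.argmaxcond2}, the Lagrangian splits as
\[
  \bigl(p_0F_0(X^0)-q\cdot X^0\bigr)+\sum_{k=1}^m\bigl(q_kF_k(X^k,l^k)-q\cdot X^k-s\cdot l^k\bigr),
\]
so its supremum over the nonnegative primal variables equals $\sup_{X^0\ge0}(p_0F_0(X^0)-q\cdot X^0)+\sum_{k=1}^m\Pi_k(q,s)$. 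Because $F_0$ is positively homogeneous of degree one, the first term is $0$ when $p_0\le q_0(q)$ and $+\infty$ otherwise; hence the dual problem is $\inf\{\sum_k\Pi_k(q,s)\mid q_0(q)\ge p_0,\ q\ge0\}=\Pi^A(s,p_0)$ by \eqref{agr.Pidef}. Since the inner program is a concave maximisation over a set cut out by concave inequalities and \eqref{agr.prod} supplies a strictly feasible point, strong duality gives $\psi(s)=\Pi^A(s,p_0)$.

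Identity \eqref{agr.FtoPi} then follows by reinstating the variable $l$: for fixed nonnegative $X^0,X^k,l^k$ satisfying \eqref{agr.prbal}, the optimal $l$ in the resource constraint \eqref{agr.prres} is $l=l^1+\dots+l^m$ (as $s\ge0$), so $\sup_{l\ge0}(p_0F^A(l)-s\cdot l)$ equals the supremum defining $\psi(s)$, namely $\Pi^A(s,p_0)$. For \eqref{agr.PitoF}, write $p_0F^A(l)$ as the value of the convex program ``maximise $p_0F_0(X^0)$ subject to \eqref{agr.prbal}, \eqref{agr.prres}, \eqref{agr.prpos}'' and dualise the resource constraint \eqref{agr.prres} with multiplier $s\ge0$: the Lagrangian is $p_0F_0(X^0)+s\cdot(l-l^1-\dots-l^m)$ with \eqref{agr.prbal} and nonnegativity retained, whose supremum over the primal variables is $s\cdot l+\psi(s)=s\cdot l+\Pi^A(s,p_0)$. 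The same Slater point \eqref{agr.prod} applies, so strong duality yields $p_0F^A(l)=\inf_{s\ge0}(\Pi^A(s,p_0)+s\cdot l)$, which is \eqref{agr.PitoF}. (Equivalently, once \eqref{agr.FtoPi} is known, \eqref{agr.PitoF} is its Fenchel--Moreau biconjugate, using that $F^A$ is concave, non-decreasing and upper semicontinuous on $\mathbb R^n_+$.)

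I expect the only genuine obstacle to be the constraint-qualification and finiteness bookkeeping: checking that \eqref{agr.prod} is an admissible Slater point for the relevant data (including boundary prices $s$), that the suprema above stay finite on the range where the formulas are asserted so that the duality gaps vanish, and---if one prefers the biconjugation route for \eqref{agr.PitoF}---that $F^A$ is upper semicontinuous up to $\partial\mathbb R^n_+$. The algebraic core, namely the splitting of the Lagrangian and the evaluation $\sup_{X^0\ge0}(p_0F_0(X^0)-q\cdot X^0)=0\iff q_0(q)\ge p_0$, is routine.
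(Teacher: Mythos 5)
Your argument is correct, and it lives in the same circle of ideas as the paper's proof --- convex duality with the Slater point supplied by the productivity hypothesis \eqref{agr.prod} --- but it is organised differently. The paper's proof is a two-line citation: the Fenchel duality theorem is invoked to produce \eqref{agr.PitoF2}, which together with the definition \eqref{agr.Pidef} gives \eqref{agr.PitoF}, and then \eqref{agr.FtoPi} is obtained from \eqref{agr.PitoF} by the Fenchel--Moreau biconjugation theorem. You proceed in the opposite order and unwind the duality by hand: you first dualise only the balance constraints \eqref{agr.prbal} to identify the partial value $\psi(s)$ with $\Pi^A(s,p_0)$ (the evaluation $\sup_{X^0\ge 0}(p_0F_0(X^0)-q\cdot X^0)=0$ iff $q_0(q)\ge p_0$ is exactly the homogeneity computation the paper leaves implicit), deduce \eqref{agr.FtoPi} by the elementary observation that the optimal $l$ in \eqref{agr.prres} is $l=l^1+\cdots+l^m$, and then get \eqref{agr.PitoF} by a second dualisation of the resource constraint, mentioning Fenchel--Moreau only as an alternative. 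What your route buys is self-containedness --- both formulas come from explicit Lagrangians rather than from two named theorems --- at the cost of having to check the Slater condition twice. The caveat you flag about constraint qualification at boundary data (one needs $l>0$, obtained by scaling the productivity point using concavity and $F_j(0)=0$, and then a continuity argument to reach $\partial\mathbb R^n_+$) is real but is equally present, and equally unaddressed, in the paper's own one-line appeal to Fenchel duality; it is not a gap specific to your proof.
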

Proposition \ref{agr.propdual} is proved in Appendix \ref{ape.agr}.

\subsection{Is the Houthakker-Johansen model the universal resource distribution model?}\label{agr.uni}

We consider the question of \textit{universality} for the Houthakker-Johansen model: is it possible to describe the macro-level behavior of a group of industries using the Houthakker-Johansen model of a single industry? In view of \eqref{agr.PitoF}, \eqref{agr.FtoPi} this question leads to the following inverse problem.
\begin{problem} Given function $\Pi^A(s,p_0)$ defined in \eqref{agr.Pidef}, find a non-negative measure $\mu^A$ supported in $\mathbb R^n_+$ such that
\begin{equation}\label{agr.Pimu}
  \Pi^A(s,p_0) = \int_{\mathbb R^n_+} ( p_0 - s \cdot x)_+ \mu^A(dx),
\end{equation}
where $(\cdot)_+$ is defined in \eqref{hjm.a+}.
\end{problem}

If $n=1$, equation \eqref{agr.Pimu} is uniquely solvable and
\begin{equation}
  \mu^A(dx) = d_x \bigl( \tfrac{\partial \Pi^A(1,x)}{\partial x} \bigr).
\end{equation}
Thus, in the case of a single primary resource the aggregate distribution of capacities over technologies is well-defined. Note that the notion of the aggregate distribution of capacities over technologies is useful for substantive analysis of economic systems (see, e.g., \cite{Petrov1996}), and it is similar to the notion of quasiparticle in physics.

It follows from \eqref{agr.Pidef} that $\Pi^A(s,p_0)$ is convex and positive homogeneous of degree one. Suppose that each of the industries of the group are described by the Houthakker-Johansen model with compactly supported distributions of capacities. Then $\Pi^A(s,p_0)$ also satisfies conditions of the uniqueness Theorem \ref{hjm.thmuni} and condition (i) of the range characterization Theorem \ref{hjm.thmchar}. The following example shows that condition (ii) of the range characterization Theorem \ref{hjm.thmchar} can be violated.

\begin{example}[See \cite{Henkin1990a}]\label{agr.excomp} Consider the group of two industries using two types of primary resources (i.e. $m = 2$, $n=2$) and independently producing complementary products, so that
\begin{equation}\label{agr.F0comp}
  F_0(X_1^0,X_2^0) = \min(X_1^0,X_2^0).
\end{equation}
The industries are described by the Houthakker-Johansen model as follows.

There is a single technology in the first industry and the corresponding distribution of capacities is
\begin{equation}\label{agr.exmu1}
  \mu_1(dx) = k_0 \delta(x-z), \quad z = (z_1,z_2),
\end{equation}
where $\delta$ denotes Dirac's delta function. 

There are two technologies in the second industry and the corresponding distribution of capacities is 
\begin{equation}\label{agr.exmu2}
  \mu_2(dx) = k_1 \delta(x-y^1) + k_2 \delta(x-y^2), \quad y^j = (y^j_1,y^j_2), \quad j = 1,2.
\end{equation}

Is is supposed that
\begin{equation}
  k_1 + k_2 > k_0, \quad y_1^1 > y_1^2, \quad y_2^1 > y_2^2.
\end{equation}

Using \eqref{agr.Pidef}, we obtain
\begin{equation}\label{agr.PiAmax}
\begin{gathered}
	\Pi^A(s,p_0) = \max \bigl\{ \Pi_1(s,p_0), \Pi_2(s,p_0) \bigr\}, \\
	\Pi_1 = (k_0 - k_2)_+ \bigl( p_0 - s \cdot (z+y^1) \bigr)_+ + \min(k_0,k_2) \bigl(p_0 - s \cdot (z+y^2) \bigr)_+, \\
	\Pi_2 =  \min(k_0,k_1) \bigl( p_0 - s \cdot (z+y^1) \bigr)_+ + (k_0-k_1)_+ \bigl(p_0 - s \cdot (z+y^2) \bigr)_+,
 \end{gathered}
 \end{equation}
where $(\cdot)_+$ is defined in \eqref{hjm.a+}. Formula \eqref{agr.PiAmax} can be rewritten as follows:
\begin{equation}
  \begin{gathered}
  \Pi^A(s,p_0) = \Pi_j(s,p_0), \quad p_0 >0, \; s \in K_j, \; j = 1,2,\\ 
   K_1 = \bigl\{ s \in \mathbb R^2_+ \mid s\cdot y^2 \leq s \cdot y^1 \bigr\},\\
   K_2 = \bigl\{ s \in \mathbb R^2_+ \mid s \cdot y^1 \leq s \cdot y^2 \bigr\}.
   \end{gathered}
\end{equation}

Profit functions $\Pi_1(s,p_0)$, $\Pi_2(s,p_0)$ correspond to the following aggregate distributions of capacities over technologies:
\begin{equation}
  \begin{gathered}
   \mu_1^A(dx) = (k_0-k_2)_+ \delta(x-z-y^1) + \min(k_0,k_2) \delta(x-z-y^2), \\
   \mu_2^A(dx) = \min(k_0,k_1) \delta(x-z-y^1) + (k_0-k_1)_+ \delta(x-z-y^2),
   \end{gathered}
\end{equation}
respectively. If $s \in K_1$ (resp. $s \in K_2$) then the group of industries is described by a single aggregate distribution of capacities $\mu_1$ (resp. $\mu_2$). It follows from the uniqueness Theorem \ref{hjm.thmuni} that there are no unique aggregate distribution of capacities that could describe this group of industries for all $s \in \mathbb R^2_+$.

When prices $s$ of primary resources pass from cone $K_1$ to cone $K_2$, the relative profitability of technologies of the second industry is changed. This leads to change of the aggregate distribution of capacities. Probably, this effect explains the rapid decrease in power consumption for industries in developed capitalist countries after the energetic crisis of 1970ies.

Formally, this effect is caused by violation of condition (ii) of the range characterization Theorem \ref{hjm.thmchar}: function
\begin{equation}
  G(s) = \max\{ G_1(s), G_2(s) \}, \quad G_j(s) = \int_0^{+\infty} e^{-\tau} d_\tau \bigl( \tfrac{\partial \Pi_j(s,\tau)}{\partial \tau} \bigr) \quad (j = 1,2),
\end{equation}
is not smooth on the hypersurface $\bigl\{ s \in \mathbb R^2_+ \mid s \cdot y^1 = s \cdot y^2 \bigr\}$.
\end{example}

\subsection{Existence of an aggregate description and stable correspondences}\label{agr.cor}

We consider a generalization of Example \ref{agr.excomp} to the case of a group of two industries, each of which has a finite number of micro-level technologies. It is supposed that they use the same primary resources and produce complementary goods, so that the consumer's demand is described by utility function $F_0(X_1^0,X_2^0)$ of \eqref{agr.F0comp}. An example of such a group of industries is a car manufacturing company, where the first industry produces car engines and the second one produces the remaining parts of cars. 

Let $X = \{x^1,\dots,x^m\} \subset \mathbb R^n_+$, $Y = \{ y^1,\dots,y^m \} \subset \mathbb R^n_+$ be some multisets (i.e. the same element can occur multiple times) of unit capacity technologies of the first and second industry, respectively. Production capacities of these industries are distributed over technologies. Technologies differ by the amount of primary resource used per unit of the output.

In a similar way with Example \ref{agr.excomp}, for any fixed prices $s$ of raw resources, some capacities of the industries are used and some are not. The used capacities are determined by relative profitability of their technologies for prices $s$. Thus, for any fixed $s$ there is a correspondence $\gamma_s \in X \times Y$ such that $(x^i,y^j) \in \gamma_s$ if and only if $x^i$ and $y^j$ are used or not used at the same time. If the correspondence $\gamma_s$ changes as we change $s$, this is interpreted as a transformation of technological structure of the group of industries. This transformation usually appears as an economic crisis.

If the correspondence $\gamma_s$ does not change for $s$ in some cone $K \subset \mathbb R^n_+$, then this pair of industries admits an aggregate description in the Houthakker-Johansen model for $s \in K$. In the following proposition we give a criterion for stability of such correspondences.

\begin{definition}\label{agr.defKst} Let $K \subseteq \mathbb R^n_+$ be a cone, $X = \{x^1,\dots,x^m\}$, $Y = \{y^1,\dots,y^m\}$ be multisets. Bijection $\gamma \colon X \to Y$ is called \textit{$K$-stable correspondence} if for any $x^i$, $x^j \in X$, $s \in K$, the inequality $s \cdot x^i < s \cdot x^j$ implies the inequality $s \cdot \gamma(x^i) \leq s \cdot \gamma(x^j)$. 
\end{definition}

If $\gamma \colon X \to Y$ is a $K$-stable correspondence then for any $s \in K$, capacities corresponding to technology $x^i \in X$ are used if and only if capacities corresponding to technology $\gamma(x^i) \in Y$ are used.   

For a given cone $K \subseteq \mathbb R^n_+$, we define its dual cone $K^*$ is follows:
\begin{equation}\label{agr.K*def}
  K^* = \bigl\{ p \in \mathbb R^n \mid \text{$p \cdot s \geq 0$ for any $s \in K$} \bigr\}. 
\end{equation}
Not that $x^j - x^i \in K^*$ if and only if $x^i \cdot s \leq x^j \cdot s$ for \textit{any} $s \in K$.

\begin{proposition}[see \cite{Karzanov2005}] Let $X$, $Y$, $K$ be the same as in Definition \ref{agr.defKst}. Bijection $\gamma \colon X \to Y$ is $K$-stable if and only if for any $x^i$, $x^j \in X$:
\begin{enumerate}
 \item[(i)] If $x^i \neq x^j$, $x^j - x^i \in K^*$, then $\gamma(x^j)-\gamma(x^i) \in K^*$.
 \item[(ii)] If $x^j - x^i \not\in K^*$, $x^i - x^j \not\in K^*$, then there exist $\lambda \geq 0$, $\mu \geq 0$, $\lambda + \mu > 0$, such that $\lambda(x^j-x^i) = \mu\bigl( \gamma(x^j) - \gamma(x^i) \bigr)$. 
\end{enumerate}
\end{proposition}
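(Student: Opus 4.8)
The plan is to localize everything to a single pair $x^i,x^j\in X$ and read the statement off a two‑dimensional picture. Put $a=x^j-x^i$ and $b=\gamma(x^j)-\gamma(x^i)$. Using the remark just before the proposition (that $v\in K^*$ iff $s\cdot v\ge0$ for all $s\in K$), I would first record that $K$‑stability of $\gamma$ is equivalent to: for every such pair there is no $s\in K$ with $s\cdot a$ and $s\cdot b$ of strictly opposite signs — one of the two sign conditions is $K$‑stability for $(x^i,x^j)$, the other is $K$‑stability for $(x^j,x^i)$. I will assume, as holds in all cases of interest, that $K$ is a convex cone with $\mathrm{span}\,K=\mathbb R^n$ (equivalently $K^*$ is pointed); this hypothesis is genuinely needed — for $K$ a ray in $\mathbb R^2$ condition (i) already fails — and it makes the trichotomy "$a\in K^*$ / $-a\in K^*$ / neither" exhaustive whenever $a\ne0$ (since $a\in K^*\cap(-K^*)$ forces $a=0$).

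For the direction "(i) and (ii) $\Rightarrow$ $K$‑stable" I would argue directly: given $s\in K$ with $s\cdot a>0$, if $a\in K^*$ then $x^i\ne x^j$ (as $s\cdot a>0$), so (i) yields $b\in K^*$ and hence $s\cdot b\ge0$; if $a\notin K^*$ then also $-a\notin K^*$ because $s\cdot(-a)<0$, so (ii) gives $\lambda,\mu\ge0$, $\lambda+\mu>0$, $\lambda a=\mu b$, and pairing with $s$ forces $\mu>0$ and $s\cdot b=(\lambda/\mu)(s\cdot a)\ge0$. This direction is routine.

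For the converse I would split on the same trichotomy. If $a\in K^*$ (with $x^i\ne x^j$): choose $s_*\in K$ with $s_*\cdot a>0$ — possible since $-a\notin K^*$ — and for arbitrary $s_0\in K$ use convexity to put $s_0+\varepsilon s_*\in K$; then $(s_0+\varepsilon s_*)\cdot a>0$, so $K$‑stability gives $(s_0+\varepsilon s_*)\cdot b\ge0$, and letting $\varepsilon\to0^+$ yields $s_0\cdot b\ge0$, i.e.\ $b\in K^*$, which is (i). If neither $a$ nor $-a$ lies in $K^*$, then both $\{s\in K:s\cdot a>0\}$ and $\{s\in K:s\cdot a<0\}$ are nonempty, so the convex cone $C=\{(s\cdot a,s\cdot b):s\in K\}\subseteq\mathbb R^2$ meets both open halfplanes $\{u>0\}$ and $\{u<0\}$ while, by $K$‑stability, avoiding the open quadrants $\{u>0,v<0\}$ and $\{u<0,v>0\}$. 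I would then invoke the elementary classification of convex cones in the plane to conclude that such a $C$ lies on a line $\mathbb R(\delta_1,\delta_2)$ through the origin with $(\delta_1,\delta_2)$ in the closed first quadrant and $\delta_1>0$ (a two‑dimensional sector or a halfplane would contain a forbidden quadrant; a ray or $\{0\}$ would keep $s\cdot a$ of one sign; the $v$‑axis would make $s\cdot a\equiv0$ — all excluded here). Containment $C\subseteq\mathbb R(\delta_1,\delta_2)$ says $s\cdot(\delta_2a-\delta_1b)=0$ for all $s\in K$, so $\delta_2a-\delta_1b\in(\mathrm{span}\,K)^\perp=\{0\}$ and $b=(\delta_2/\delta_1)a$ with $\delta_2/\delta_1\ge0$; taking $\mu=1,\lambda=\delta_2/\delta_1$ (or $\lambda=0,\mu=1$ when $b=0$) gives (ii).

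I expect the hard part to be precisely this last step of the converse: upgrading the soft statement "$s\cdot a$ and $s\cdot b$ never disagree in sign on $K$" to the rigid relation $b\in\mathbb R_{\ge0}a$. The planar‑cone classification handles it cleanly, but one must carefully dispose of the degenerate cones (the origin, rays, and the coordinate‑axis lines), and this is exactly where full‑dimensionality of $K$ is used; it is also essentially the only place where convexity of $K$ enters, alongside the perturbation argument for (i).
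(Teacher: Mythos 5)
The paper does not prove this proposition at all --- it is imported verbatim from \cite{Karzanov2005} and no argument appears in Appendix \ref{ape.agr} --- so there is no in-paper proof to compare against; I can only assess your argument on its own terms, and it is correct and complete. The reduction to the pairwise quantities $a=x^j-x^i$, $b=\gamma(x^j)-\gamma(x^i)$ and the reformulation of $K$-stability as ``$s\cdot a$ and $s\cdot b$ never have strictly opposite signs on $K$'' is exactly right; the easy direction is handled cleanly (including the observation that $\mu=0$ in (ii) is impossible once $s\cdot a>0$); the perturbation $s_0+\varepsilon s_*$ correctly upgrades $K$-stability to $b\in K^*$ in case (i); and the planar step does work: a convex cone $C\subseteq\{u\ge0,v\ge0\}\cup\{u\le0,v\le0\}$ meeting both open half-planes $\{u>0\}$ and $\{u<0\}$ must collapse to a line of nonnegative slope through the origin (one can see this even without the full classification --- on a segment joining $P\in C\cap\{u>0\}$ to $Q\in C\cap\{u<0\}$ the affine function $v$ must vanish where $u$ does, forcing the segment through the origin and hence $Q\in\mathbb R_{<0}P$), after which $(\mathrm{span}\,K)^\perp=\{0\}$ yields $b\in\mathbb R_{\ge0}a$, which is (ii).

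Two remarks on hypotheses, both of which you already flagged. First, Definition \ref{agr.defKst} says only ``cone,'' while your proof uses convexity of $K$ (for $s_0+\varepsilon s_*\in K$ and for $C$ to be a convex cone) and $\mathrm{span}\,K=\mathbb R^n$ (so that the trichotomy is exhaustive and $(\mathrm{span}\,K)^\perp=\{0\}$); your observation that full-dimensionality is genuinely needed is correct --- if $a\in K^\perp\setminus\{0\}$ then $K$-stability is vacuous for that pair while (i) is not, so the forward implication fails --- and these hypotheses hold in the paper's intended setting of price cones with nonempty interior. Second, since $X$ and $Y$ are multisets, the clause ``$x^i\neq x^j$'' in (i) must be read as inequality of vectors (so $a\neq0$), as you implicitly do; with the other reading the proposition would be false. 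Neither point is a gap in your argument; both are imprecisions in the statement as transcribed in the paper.
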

Note that (i) states that $\gamma$ is monotone with respect to order $\preceq$ defined as follows:
\begin{equation}
 \text{$b \preceq a$ if and only if $a - b \in K^*$, $a$, $b \in \mathbb R^n_+$}.
\end{equation}
Also note that condition (ii) is very restrictive: if there are technologies in the first (or second) industry which compete for resources for $s \in K$, distributions of capacities of the two industries must be similar.

Next, we show that even if there is a competition of micro-level technologies for primary resources, it is possible that two industries have an aggregate description.

\begin{example} We consider a group of two industries using two types of primary resources and independently producing partially substitutable goods. Consumer's demand is described by the following utility funtion:
\begin{equation}
  F_0(X_1^0,X_2^0) = \bigl( (X_1^0)^{-\rho} + (X_2^0)^{-\rho} \bigr)^{-\frac 1 \rho}, \quad \rho \in [-1,0) \cup (0,+\infty).
\end{equation}
The industries are described by the Houthakker-Johansen model with distributions of capacities over technologies given by \eqref{agr.exmu1}, \eqref{agr.exmu2}, respectively.

The aggregate profit function is given by
\begin{equation}
  \begin{gathered}
  \Pi^A(s,p_0) = \min\bigl\{ k_0 (p_1-s\cdot z)_+ + k_1(p_1 - s \cdot y^1)_+ \\
  + k_2(p_2 - s \cdot y^2)_+ 
  \bigm| \bigl( p_1^{\frac{\rho}{1+\rho}} + p_2^{\frac{\rho}{1+\rho}}\bigr)^{\frac{1+\rho}{\rho}} \geq p_0 \bigr\}.
  \end{gathered}
\end{equation}
In particular, if
\begin{equation}\label{agr.exp0diap}
  \begin{gathered}
  p_0 > \max \bigl\{  \kappa_1 s \cdot z,  \kappa_2 \max\{s\cdot y^1,s\cdot y^2\} \bigr\}, \\
	\kappa_1 = \biggl(\frac{k_0^\rho + (k_1+k_2)^\rho}{(k_1+k_2)^\rho} \biggr)^{\frac{1+\rho}{\rho}}, \quad \kappa_2 = \biggl(\frac{k_0^\rho + (k_1+k_2)^\rho}{k_0^\rho} \biggr)^{\frac{1+\rho}{\rho}},
  \end{gathered}
\end{equation}
then 
\begin{equation}
  \begin{gathered}
	\Pi^A(s,p_0) = \tfrac{k_0}{\kappa_1} \bigl( p_0 - \kappa_1 s\cdot z\bigr)_+ + \tfrac{k_1}{\kappa_2} \bigl(p_0 - \kappa_2 s \cdot y^1\bigr)_+ + \tfrac{k_2}{\kappa_2} \bigl(p_0-\kappa_2 s \cdot y^2\bigr)_+.
  \end{gathered}
\end{equation}
This aggregate profit function corresponds to the following distribution of capacities:
\begin{equation}
  \mu^A(dx) = \tfrac{k_0}{\kappa_1} \delta( x - \kappa_1 z) + \tfrac{k_1}{\kappa_2} \delta(x - \kappa_2 y^1) + \tfrac{k_2}{\kappa_2} \delta(x-\kappa_2 y^2 ),
\end{equation}
where $\delta$ is Dirac's delta function.

Thus, for prices $p_0$, $s$ satifsying \eqref{agr.exp0diap} the considered group of two industries admits a description using an aggregate distribution of capacities, despite the competition of micro-level technologies for primary resources. 
\end{example}

\section{Generalized Houthakker-Johansen model with substitution at the micro-level and new problems of integral geometry}\label{sec.ghj}

\subsection{Resource distribution problem taking into account micro-level substitution of inputs}\label{ghj.dir}

Competition of resources (for example, competition of home-made and imported resources) is a typical feature of globalization. Globalization of the world economy led to standartization of products produced in different countries. As a result, the micro-level substitution of production factors has become a mechanism that stabilizes inter-industrial connections. In the present section we describe a resource distribution model that takes into account these stabilizing mechanisms.

We use definitions and notations of Section \ref{sec.hjm}. Substitution of production factors at the micro-level is described by function $f(v_1,\dots,v_n)$ with neoclassical properties: it is positive homogeneous of degree one, concave and continuous on $\mathbb R^n_+$. It is supposed that a unit capacity with technology $x = (x_1,\dots,x_n)$ has production function 
\begin{equation}
  u \mapsto \min\bigl( 1, f(\tfrac{u_1}{x_1},\dots,\tfrac{u_n}{x_n})\bigr),
\end{equation}
where $u = (u_1,\dots,u_n)$ is the vector of inputs. 

In economic literature a typical example of such function $f(v_1,\dots,v_n)$ describing substitution of production factors is the \textit{constant elasticity of substitution} (CES) function 
\begin{equation}
   f(v_1,\dots,v_n) = (v_1^{-\rho} + \cdots + v_n^{-\rho} )^{-\frac 1 \rho}, \quad \rho \in [-1,0)\cup(0,+\infty).
\end{equation}
Parameter $\rho$ is related to elasticity $\sigma$ of substitution of production factors at the microlevel by $\sigma = \tfrac{1}{1+\rho}$. Note that:
\begin{enumerate}
 \item[(i)] if $\rho = -1$, production factors are perfect substitutes;
 \item[(ii)] if $\rho \in [-1,0)$, there is a competition of production factors with possible complete expulsion of one of them;
 \item[(iii)] if $\rho \in (1,+\infty)$, there is a competition of production factors without complete expusion of one of them;
 \item[(iv)]  if $\rho\to+\infty$, we obtain the Leontief fixed proportion function $f(v_1,\dots,v_n) = \min(v_1,\dots,v_n)$, and there are no substitution between production factors.
\end{enumerate}
In the limit case (iv) the generalized resource distribution model, which we describe below, is reduced to the classical Houthakker-Johansen model. 

We consider the following generalization of the resource distribution problem \eqref{hjm.prmax}--\eqref{hjm.pru} for maximization of the total output of the industry:
\begin{gather}
  \int_{\mathbb R^n_+} \min \bigl(1,f(\tfrac{u_1(x)}{x_1},\dots,\tfrac{u_n(x)}{x_n}) \bigr) \mu(dx) \to \max_{u(x)}, \label{ghj.prmax} \\
  \int_{\mathbb R^n_+} u_j(x) \mu(dx) \leq l_j, \quad j = 1,\dots, n, \label{ghj.prres}\\
  u(x) = \bigl( u_1(x), \dots, u_n(x) \bigr) \geq 0. \label{ghj.pru}
\end{gather}
Here $u(x) = \bigl( u_1(x),\dots, u_n(x) \bigr)$ is the vector of inputs for a unit capacity using technology $x$.

Consider the Young transform of function $f(v)$:
\begin{equation}
  h(p) = \inf \{ \tfrac{p\cdot v}{f(v)} \mid v \geq 0,\; f(v) > 0 \}.
\end{equation}
Denote 
\begin{equation}
 p \circ x = (p_1 x_1, \dots, p_n x_n).
\end{equation}
The unit cost of production using technology $x$ for the unit prices $p$ of inputs is equal to $h( p \circ x )$. The unit capacity profit function for technology $x$ is $\pi(x,p,p_0) = (p_0 - h(p \circ x) )_+$, where $(\cdot)_+$ is defined in \eqref{hjm.a+}.

\begin{theorem}[see \cite{Shananin1997ena}]\label{ghj.thmGNP} Let $\mu$ be a finite compactly supported non-negative Borel measure in $\mathbb R^n_+$. The following statements are valid:
\begin{itemize}
 \item[(i)] If $l \geq 0$, then problem \eqref{ghj.prmax}--\eqref{ghj.pru} has a solution in the class of vector-functions $u(x)$ with $\mu$-integrable components.
 \item[(ii)] Let vector-function $u(x)$ satisfying \eqref{ghj.prres}, \eqref{ghj.pru} be a solution of problem \eqref{ghj.prmax}--\eqref{ghj.pru}. Then there exist $p_0 \geq 0$, $p = (p_1,\dots,p_n)\geq 0$, $p_0+|p|>0$, such that
 \begin{gather}
	p_j \biggl( l_j - \int_{\mathbb R^n_+} u_j(x) \mu(dx) \biggr) = 0 \quad (j = 1,\dots, n),\label{ghj.gNP1} \\
	\text{$u(x) = 0$ for $\mu$-almost all $x \in \mathbb R^n_+$ such that $p_0 < h(p \circ x)$},\label{ghj.gNP2}\\
	\begin{gathered}\label{ghj.gNP3}
	\text{$f(u(x)) = 1$, $p_0 - pu(x) = \pi(x,p,p_0)$ for}\\
	\text{$\mu$-almost all $x \in \mathbb R^n_+$ such that $p_0 > h(p \circ x)$}.
	\end{gathered}
 \end{gather}
  \item[(iii)] Let $u(x)$ satisfy \eqref{ghj.prres}, \eqref{ghj.pru} and let $l = (l_1,\dots,l_n) > 0$. If there exist $p_0 \geq 0$, $p = (p_1,\dots,p_n)\geq 0$, $p_0+|p|>0$, satisfying \eqref{ghj.gNP1}--\eqref{ghj.gNP3}, then $u(x)$ is a solution of problem \eqref{ghj.prmax}--\eqref{ghj.pru}.
\end{itemize}
\end{theorem}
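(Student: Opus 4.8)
The plan is to treat \eqref{ghj.prmax}--\eqref{ghj.pru} as a concave programming problem over a convex subset of $L^1(\mu)^n$. Since $f$ is concave and positively homogeneous of degree one, for each fixed $x$ the map $u\mapsto\min\bigl(1,f(\tfrac{u_1}{x_1},\dots,\tfrac{u_n}{x_n})\bigr)$ is concave, the resource constraints \eqref{ghj.prres} are linear, and \eqref{ghj.pru} describes a convex cone, so the problem is to maximize a concave functional over a convex set. Write $u/x=(u_1/x_1,\dots,u_n/x_n)$ for brevity; note also that one loses nothing by assuming $f(u(x)/x)\le1$ $\mu$-a.e., since replacing $u(x)$ by $u(x)/f(u(x)/x)$ where $f(u(x)/x)>1$ leaves the objective unchanged and only relaxes \eqref{ghj.prres}.

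For part (i) I would use the direct method. For a maximizing sequence $u^k$, the constraints \eqref{ghj.prres} bound each $u^k_j$ in $L^1(\mu)$; as $L^1$ is not reflexive I would pass to a subsequence whose Ces\`aro means $\bar u^N=\tfrac{1}{N}\sum_{k\le N}u^k$ converge $\mu$-a.e.\ to some $u\in L^1(\mu)^n$ (Koml\'os' theorem), or use a Dunford--Pettis type device. Continuity of $f$ on $\mathbb R^n_+$, the uniform bound by $1$ of the integrand and finiteness of $\mu$ give, by bounded convergence, that the objective at $\bar u^N$ tends to that at $u$; concavity of the objective makes the objective at $\bar u^N$ at least the average of those at $u^1,\dots,u^N$, hence it tends to the supremum, so $u$ is optimal, and Fatou applied to \eqref{ghj.prres} shows $u$ is feasible. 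The compact support and finiteness of $\mu$ and the cap by $1$ are exactly what make this work; the only delicate point is the behaviour on $\partial\mathbb R^n_+$.

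For part (ii) the engine is the pointwise maximization of the Lagrangian. A Kuhn--Tucker/Lagrange multiplier theorem for convex programming --- obtained, as in the classical case \cite{Shan1984en}, by taking a supporting hyperplane at $(F,l)$, $F$ the optimal value, of the convex value set $S=\{(r,\ell):\exists\,u\ge0,\ \int\min(1,f(u/x))\,d\mu\ge r,\ \int u_j\,d\mu\le\ell_j\ \forall j\}$ --- produces $p_0\ge0$, $p=(p_1,\dots,p_n)\ge0$ with $p_0+|p|>0$ such that the complementary slackness relations \eqref{ghj.gNP1} hold and the optimal $u$ maximizes $u\mapsto p_0\int\min(1,f(u/x))\,d\mu-p\cdot\int u\,d\mu$ over $u\ge0$. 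By the pointwise form of this functional, $\mu$-a.e.\ $x$ the vector $u(x)$ maximizes $w\mapsto p_0\min(1,f(w/x))-p\cdot w$ over $w\ge0$. The defining inequality of the Young transform $h$, $(p\circ x)\cdot v\ge h(p\circ x)\,f(v)$ for all $v\ge0$ --- which with $v=w/x$ reads $p\cdot w\ge h(p\circ x)\,f(w/x)$ --- lets one evaluate this pointwise maximum as $(p_0-h(p\circ x))_+=\pi(x,p,p_0)$, attained only at $w=0$ when $h(p\circ x)>p_0$ (this is \eqref{ghj.gNP2}) and, when $h(p\circ x)<p_0$, only at $w$ with $f(w/x)=1$ and $p\cdot w=h(p\circ x)$, i.e.\ $p_0-p\cdot w=\pi(x,p,p_0)$ (this is \eqref{ghj.gNP3}).

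Part (iii) runs this chain backwards. Since $l>0$, $p_0$ must be positive: if $p_0=0$, then $h\ge0$ makes the set in \eqref{ghj.gNP3} empty and \eqref{ghj.gNP2} forces $u_j=0$ $\mu$-a.e., while \eqref{ghj.gNP1} forces $\int u_j\,d\mu=l_j>0$, a contradiction. Then \eqref{ghj.gNP2}--\eqref{ghj.gNP3} say that for the given $u$ the integrand $p_0\min(1,f(u/x))-p\cdot u$ equals $\pi(x,p,p_0)$ $\mu$-a.e.\ (the set $\{p_0=h(p\circ x)\}$, on which $\pi$ vanishes, needs a small separate argument and is in particular harmless when $\mu$ does not charge it), and \eqref{ghj.gNP1} gives $\sum_j p_j\int u_j\,d\mu=p\cdot l$. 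Hence, for any feasible competitor $\tilde u$, using $p\ge0$ and the pointwise bound $p_0\min(1,f(\tilde u/x))-p\cdot\tilde u\le\pi(x,p,p_0)$,
\[
p_0\int\min\bigl(1,f(\tilde u/x)\bigr)d\mu\le\int\bigl(p_0\min(1,f(\tilde u/x))-p\cdot\tilde u\bigr)d\mu+p\cdot l\le\int\pi(x,p,p_0)\,d\mu+p\cdot l=p_0\int\min\bigl(1,f(u/x)\bigr)d\mu ,
\]
so $u$ is optimal. I expect the main obstacle to be part (i): without a reflexive ambient space the existence of a maximizer is not a routine weak-compactness argument, and one must exploit the cap by $1$ and the finiteness of $\mu$ through a Koml\'os- or Dunford--Pettis-type device; a secondary point is verifying that the supporting hyperplane in (ii) has nonzero normal (automatic here via the feasible point $u=0$) and handling the set $\{p_0=h(p\circ x)\}$ in (iii).
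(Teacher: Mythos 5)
The paper does not prove Theorem \ref{ghj.thmGNP}: it is imported verbatim from \cite{Shananin1997ena}, and Appendix \ref{ape.ghj} contains no argument for it. So there is no in-paper proof to compare against; what you have written is a reconstruction of the standard route, and it is essentially the right one. Your strategy — direct method plus a Koml\'os/Ces\`aro device for (i), a supporting hyperplane for the value set followed by pointwise maximization of the Lagrangian integrand for (ii), and running the chain of inequalities backwards for (iii) — is exactly the scheme used for the classical generalized Neyman--Pearson Lemma \ref{hjm.lemmaGNP}, with the single new ingredient being the identity $\sup_{w\ge0}\bigl(p_0\min(1,f(w/x))-p\cdot w\bigr)=(p_0-h(p\circ x))_+$ obtained from the Young-transform inequality $(p\circ x)\cdot v\ge h(p\circ x)f(v)$. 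That computation is correct and is the heart of the matter.

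Two caveats. First, in (ii) your claim that the pointwise maximum is attained \emph{only} at $w=0$ when $h(p\circ x)>p_0$ is not quite right if some $p_j=0$: any $w$ with $f(w/x)=0$ and $p\cdot w=0$ also attains it, so \eqref{ghj.gNP2} in the literal form $u(x)=0$ requires either a nondegeneracy convention or a normalization of the optimal solution. Second, and more substantively, the set $B=\{x:\;h(p\circ x)=p_0\}$ in part (iii) is not merely ``a small separate argument'': conditions \eqref{ghj.gNP1}--\eqref{ghj.gNP3} say nothing about $u$ on $B$, and if $\mu$ charges $B$ the integrand $p_0\min(1,f(u(x)/x))-p\cdot u(x)$ can be strictly below $\pi(x,p,p_0)=0$ there, which breaks the equality $\int\bigl(p_0\min(1,f(u/x))-p\cdot u\bigr)d\mu=\int\pi\,d\mu$ that your final chain needs. (For instance, with $f=\min$, $\mu=\delta_{(1,1)}+\delta_{(1.5,0.5)}$, $p=(1,1)$, $p_0=2$, $l=(2.5,1.5)$, the allocation $u((1,1))=(1.5,0.5)$, $u((1.5,0.5))=(1,1)$ satisfies \eqref{ghj.gNP1}--\eqref{ghj.gNP3} vacuously but is not optimal.) So either one must assume $\mu(B)=0$, or the statement must be read with the convention --- analogous to $\theta(0)=1$ in Lemma \ref{hjm.lemmaGNP}(iii) --- that the efficiency conditions of \eqref{ghj.gNP3} are imposed on $\{p_0\ge h(p\circ x)\}$ rather than on $\{p_0>h(p\circ x)\}$. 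With that reading your proof of (iii) closes; otherwise it has a genuine hole, which is inherited from the statement rather than introduced by you.
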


In a similar way with Lemma \ref{hjm.lemmaGNP}, Lagrange multipliers $p_0$, $p = (p_1,\dots,p_n)$ are interpreted as unit prices of output and inputs, respectively. Then Theorem \ref{ghj.thmGNP} states that the optimal resource distribution mechanism is equivalent to a market-type mechanism: profitable technologies for prices $p_0 \geq 0$, $p = (p_1,\dots,p_n) \geq 0$, are used completely, unprofitable technologies are not used. Besides, $p_0 \geq 0$, $p = (p_1,\dots,p_n) \geq 0$ are the equilibrium prices: they are determined by the condition of equilibrium of demand and supply on the market of resources.

We define the production function $F(l)$ and the profit function $\Pi(p,p_0)$ for the resource distribution problem \eqref{ghj.prmax}--\eqref{ghj.pru} as follows.

\begin{definition}\label{ghj.Fdef} Function $F(l)$ which assigns to each $l = (l_1,\dots,l_n) \geq 0$ the optimal value of functional in the resource distribution problem \eqref{ghj.prmax}--\eqref{ghj.pru} is called \textit{production function} for this problem.
\end{definition}

\begin{definition}
Function $\Pi(p,p_0)$ given by
\begin{equation}\label{ghj.profdef}
  \Pi(p,p_0) = \int_{\mathbb R^n_+}  ( p_0 - h(p \circ x) )_+ \mu(dx).
\end{equation}
is called \textit{profit function} for the resource distribution problem \eqref{ghj.prmax}--\eqref{ghj.pru}.
\end{definition}

Is it shown in \cite{Shananin1997enb} that the production function for the resource distribution problem \eqref{ghj.prmax}--\eqref{ghj.pru} is neoclassical: it is concave, non-decreasing and continuous in $\mathbb R^n_+$. It is also shown in \cite{Shananin1997enb} that the production and profit functions for problem \eqref{ghj.prmax}--\eqref{ghj.pru} are related by formulas \eqref{hjm.FtoPi}, \eqref{hjm.PitoF}.

\subsection{Connection between micro- and macro-level descriptions and new problems of integral geometry}\label{ghj.inv}

In a similar way with Subsection \ref{hjm.mic-mac}, we are interested in study of connection between distributions $\mu(dx)$ of capacities over technologies and corresponding production functions $F(l)$. One of motivations is the problem of aggregation described in Subection \ref{agr.pro} and the problem of universality of a resource distribution model stated in Subsection \ref{agr.uni} in the case of the Houthakker-Johansen model.

In view of \eqref{hjm.FtoPi}, \eqref{hjm.PitoF}, study of this relation is equivalent to investigation of operator \eqref{ghj.profdef} and transforms \eqref{hjm.FtoPi}, \eqref{hjm.PitoF}.

In a similar way with \eqref{hjm.PitoRad}, integral operator \eqref{ghj.profdef} is related to generalized Radon transform. In the case of absolutely continuous measures $\mu(dx) = f(x) dx$ this relation is given by
\begin{equation}\label{ghj.PitoRad}
  \begin{gathered}
	\frac{\partial^2 \Pi(p,p_0)}{\partial p_0^2} = \int\limits_{h(p \circ x)=p_0} f(x) \Omega(p,x), \quad \mu(dx) = f(x) dx, \\
	 d_x h(p \circ x) \wedge \Omega(p,x) = dx_1 \wedge \cdots \wedge dx_n.
  \end{gathered}
\end{equation}
The differential $(n-1)$-form $\Omega(p,x)$ is sometimes called the \textit{Gelfand-Leray form}. In view of \eqref{ghj.PitoRad}, study of operator $\Pi(p,p_0)$ is equivalent to study of the generalized Radon transform over the level hypersurfaces
\begin{equation}
  \{ x \in \mathbb R^n_+ \mid h(p \circ x) = p_0 \}, \quad p \in \mathbb R^n_+, \; p_0 > 0.
\end{equation}

Next, we recall a criterion for injectivity of operator \eqref{ghj.profdef}. Put 
\begin{equation}
  \widehat h(z) = \int_{\mathbb R^n_+} x_1^{z_1-1} \dots x_n^{z_n-1} e^{-h(x)} \, dx, \quad \Re z = ( \Re z_1,\dots,\Re z_n) > 0.
\end{equation}

\begin{theorem}[See \cite{Agaltsov2016c}] \label{ghj.thminj} Let $h(x)$ be such that
\begin{equation}
  \begin{gathered}\label{ghj.hprop}
	h(x) \in C^1(\mathop{\mathrm{int}}\mathbb R^n_+), \; \text{$h(x) > 0$  and $h(\lambda x) = \lambda h(x)$ for $\lambda > 0$, $x \in \mathbb R^n_+$}, \\
	\text{the level sets of $h(x)$ are bounded}.
  \end{gathered}
\end{equation}
Consider the following operator:
\begin{equation}\label{ghj.profop}
  f(x) \mapsto \Pi(p,p_0), \;\; \text{where $\Pi(p,p_0)$ is given by \eqref{ghj.profdef} with $\mu(dx)=f(x) dx$}.
\end{equation}
The following statements hold:
\begin{enumerate}
 \item[(i)] Operator \eqref{ghj.profop} is injective in $L^\infty(\mathbb R^n_+)$ if and only if $\widehat h(z) \neq 0$ for any $z \in \mathbb C^n$, $\Re z = (1,\dots,1)$.
 \item[(ii)] Operator \eqref{ghj.profop} is injective in $L^2(\mathbb R^n_+)$ if and only if $\widehat h(z) \neq 0$ for almost all $z \in \mathbb C^n$, $\Re z = (\tfrac 1 2,\dots,\tfrac 1 2)$.
\end{enumerate}
\end{theorem}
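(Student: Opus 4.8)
\emph{Reduction to a Laplace-type transform.} The plan is to reduce $f\mapsto\Pi$ to a translation-invariant operator on $\mathbb R^n$ and then read off injectivity from Tauberian theory. From \eqref{ghj.profdef}, $\tfrac{\partial\Pi}{\partial p_0}(p,p_0)=\int_{\{h(p\circ x)\le p_0\}}f(x)\,dx$, so $d_{p_0}\bigl(\tfrac{\partial\Pi}{\partial p_0}\bigr)$ is the image of the measure $f(x)\,dx$ under $x\mapsto h(p\circ x)$; using Fubini and the homogeneity of $h$ I would obtain
\[
  \int_0^{+\infty}e^{-\lambda p_0}\,d_{p_0}\Bigl(\tfrac{\partial\Pi(p,p_0)}{\partial p_0}\Bigr)=\int_{\mathbb R^n_+}e^{-\lambda h(p\circ x)}f(x)\,dx=\Psi(\lambda p),\qquad\lambda>0,
\]
where $\Psi(p):=\int_{\mathbb R^n_+}e^{-h(p\circ x)}f(x)\,dx$. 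The hypothesis that the level sets of $h$ are bounded is precisely what makes $\Pi$ and $\Psi$ finite for $f\in L^\infty$ and for $f\in L^2$, since $|\{x:h(p\circ x)\le r\}|=r^n|\{h\le 1\}|/(p_1\cdots p_n)$. By injectivity of the Laplace transform, together with $\Pi(p,0^+)=0$ and $\tfrac{\partial\Pi}{\partial p_0}(p,0^+)=0$ (dominated convergence, using that $\{x:h(p\circ x)=0\}$ is Lebesgue null for $p\in\mathop{\mathrm{int}}\mathbb R^n_+$), one gets $\Pi\equiv 0\iff\Psi\equiv 0$; hence $f\mapsto\Pi$ is injective on $L^\infty$ (resp.\ on $L^2$) if and only if $f\mapsto\Psi$ is.

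\emph{Logarithmic change of variables.} With $x=e^t$, $p=e^\tau$ (coordinatewise) and $\phi(t)=f(e^t)$ one finds $e^{\tau_1+\cdots+\tau_n}\Psi(e^\tau)=\int_{\mathbb R^n}K(u)\phi(u-\tau)\,du$, with kernel $K(u)=e^{-h(e^u)}e^{u_1+\cdots+u_n}$; here $\int_{\mathbb R^n}|K|=\int_{\mathbb R^n_+}e^{-h(x)}\,dx<\infty$, so $K\in L^1(\mathbb R^n)$. Since $\phi\in L^\infty(\mathbb R^n)\iff f\in L^\infty(\mathbb R^n_+)$, case (i) becomes the question of injectivity on $L^\infty$ of convolution with $K$. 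For case (ii) the natural substitution is $\tilde g(t)=f(e^t)e^{(t_1+\cdots+t_n)/2}$, which is an isometry $L^2(\mathbb R^n_+)\to L^2(\mathbb R^n)$, and the corresponding kernel is $\tilde K(u)=e^{-h(e^u)}e^{(u_1+\cdots+u_n)/2}\in L^1\cap L^2$. In both cases the change of variables $x=e^u$ identifies the Fourier transform of the kernel with a line-restriction of $\widehat h$: $\widehat K(\xi)=\widehat h(z)$ with $z_j=1-i\xi_j$, and $\widehat{\tilde K}(\xi)=\widehat h(z)$ with $z_j=\tfrac12-i\xi_j$; as $\xi$ runs over $\mathbb R^n$, $z$ runs over $\{\Re z=(1,\dots,1)\}$, resp.\ over $\{\Re z=(\tfrac12,\dots,\tfrac12)\}$. (Both kernels are real, so the zero set of $\widehat K$ is symmetric, which removes any ambiguity between convolution and correlation.)

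\emph{Conclusion.} For (i): if $\Psi\equiv 0$ then $\phi$ is annihilated by convolution with $K\in L^1$, and by the Wiener Tauberian theorem this forces $\phi=0$ as soon as $\widehat K$ has no zeros on $\mathbb R^n$, i.e.\ $\widehat h(z)\ne 0$ whenever $\Re z=(1,\dots,1)$. Conversely, if $\widehat h(z_0)=0$ for some $z_0$ with $\Re z_0=(1,\dots,1)$, then $f(x)=\prod_j x_j^{z_{0,j}-1}\in L^\infty(\mathbb R^n_+)$ is nonzero (each factor has modulus $1$) and, by the scaling $x\mapsto x/p$, one has $\Psi(p)=\bigl(\prod_j p_j^{-z_{0,j}}\bigr)\widehat h(z_0)=0$; hence the operator is not injective. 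For (ii): by Plancherel, convolution with $\tilde K$ kills $\tilde g$ iff $\widehat{\tilde g}$ vanishes a.e.\ on $\{\widehat{\tilde K}\ne 0\}$, so the operator is injective on $L^2$ iff $\widehat{\tilde K}\ne 0$ almost everywhere on $\mathbb R^n$, i.e.\ $\widehat h(z)\ne 0$ for a.e.\ $z$ with $\Re z=(\tfrac12,\dots,\tfrac12)$; when this fails, choosing $\widehat{\tilde g}$ to be the indicator of a bounded subset of positive measure of $\{\widehat{\tilde K}=0\}$ yields a nonzero $f\in L^2$ with $\Psi\equiv 0$.

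The main obstacle is the $L^\infty$ statement (i): one has to check carefully that $K\in L^1(\mathbb R^n)$ — which is exactly the place where boundedness of the level sets of $h$ enters — and then invoke the Wiener Tauberian theorem in the correct form (an $L^\infty$ function annihilated by convolution with an $L^1$ function whose Fourier transform never vanishes must be zero). The passage $\Pi\leftrightarrow\Psi$ and the $L^2$ case (ii) are comparatively routine.
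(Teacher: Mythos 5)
Your argument is correct, and it follows exactly the route the paper indicates for this result (which is only cited here from \cite{Agaltsov2016c}, with the remark that the proof ``is based on appropriate multidimensional generalizations of Wiener's Tauberian theorems''): reduce $\Pi$ to $\Psi(p)=\int e^{-h(p\circ x)}f(x)\,dx$ via the Laplace transform in $p_0$, pass to logarithmic coordinates to obtain a convolution with an $L^1$ kernel whose Fourier transform is the restriction of $\widehat h$ to $\Re z=(1,\dots,1)$ (resp.\ $(\tfrac12,\dots,\tfrac12)$), and apply the Wiener Tauberian theorem in $L^\infty$ (resp.\ Plancherel in $L^2$), with the multiplicative characters $\prod_j x_j^{z_{0,j}-1}$ furnishing the converse.
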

It is also shown in \cite{Agaltsov2016c} that for any $c \in \mathbb R^n_+$, non-vanishing of $\widehat h(z)$ for $z \in \mathbb C^n$, $\Re z = c$, characterizes injectivity of \eqref{ghj.hprop} in a certain weighted $L^p$ space.

The proof of Theorem \ref{ghj.thminj} is based on appropriate multidimensional generalizations of Wiener's Tauberian theorems.

\begin{example}[See \cite{Agaltsov2016c} for details]
Theorem \ref{ghj.thminj} establishes uniqueness of the optimal resource distribution for the resource distribution problem \eqref{ghj.prmax}--\eqref{ghj.pru} for \textit{nested bounded level set CES} unit cost functions $h(x)$, which are defined as follows:
\begin{enumerate}
 \item[(i)] Function $h(x) = C ( a_1 x_1^{-r}+\cdots+ a_n x_n^{-r} )^{-\frac 1 r}$, $r \in [-1,0)$, $C$, $a_1$, \dots, $a_n > 0$, is a nested bounded level set CES function.
 \item[(ii)] If $h_1(x_1,\dots,x_n)$, $h_2(y_1,\dots,y_m)$ are nested bounded level set CES functions and $i \in \{1,\dots,n\}$, then function
 \begin{equation}
  \begin{gathered}
	h(x_1,\dots,x_{n-1},y_1,\dots,y_m) \\
	= h_1\bigl(x_1,\dots,x_{i-1},h_2(y_1,\dots,y_m),x_i,\dots,x_{n-1}\bigr)
	\end{gathered}
 \end{equation}
 is a nested bounded level set CES function. 
\end{enumerate}
Nested CES functions were introduced in \cite{Sato1967} as a generalization of CES functions, allowing different elasticities of substitution in different groups of production factors. In particular, nested CES functions allow to take into account separability of groups of production factors and the difference of elasticities of substitution in these groups.
\end{example}

The following theorem of \cite{Agaltsov2015d} establishes a relation between profit function \eqref{ghj.profdef} and the Laplace transform of the distribution of capacities.

Put 
\begin{equation}\label{ghj.Gdef}
  \begin{gathered}
   G(s) = (2\pi i)^{-n} \int_{c+i\mathbb R^n} s_1^{-z_1}\dots s_n^{-z_n} \rho_h(z) \biggl( \int_{\mathbb R^n_+} p^{z-I} \Pi(p,1)  dp \biggr) \, dz_1 \dots dz_n, \\
   \rho_h(z) = \widehat h(z)^{-1}\Gamma(z_1)\cdots\Gamma(z_n)\Gamma(2+z_1+\cdots+z_n) , \quad p^{z-I}=p_1^{z_1-1}\cdots p_n^{z_n-1}.
   \end{gathered}
\end{equation}

\begin{theorem}[See \cite{Agaltsov2015d}]\label{ghj.thmchar} Let $h(x)$ satisfy \eqref{ghj.hprop} and let $\rho_h(z) \in L^2(c+i\mathbb R^n) \cap L^\infty(c+i\mathbb R^n)$ for some $c \in \mathop{\mathrm{int}} \mathbb R^n_+$, where $\rho_h(z)$ is the function of \eqref{ghj.Gdef}. Let $\mu$ be a non-negative Borel measure on $\mathbb R^n_+$ such that 
\begin{equation}
  \int_{\mathbb R^n_+} x_1^{-c_1}\dots x_n^{-c_n} \mu(dx) < \infty, \quad \int_{\mathbb R^n_+} x_1^{-2c_1}\dots x_n^{-2c_n} \mu(dx) < \infty.
\end{equation}
A function $\Pi(p,p_0)$ can be represented in the form \eqref{ghj.profdef} if and only if
\begin{gather}
	\int_{\mathbb R^n_+} p_1^{c_1-1}\dots p_n^{c_n-1} |\Pi(p,1)| \, dp < \infty, \;\; \int_{\mathbb R^n_+} p_1^{2c_1-1}\dots p_n^{2c_n-1} |\Pi(p,1)|^2 \, dp < \infty,\\
	\Pi(p_1,\dots,p_n,p_0) = p_0 \Pi(\tfrac{p_1}{p_0},\dots,\tfrac{p_n}{p_0},1), \quad (p_1,\dots,p_n,p_0)>0,\\
	G(s) = \int_{\mathbb R^n_+} e^{-s\cdot x} \mu(dx), \; s \in \mathbb R^n_+, \;\; \text{where $G(s)$ is the function of \eqref{ghj.Gdef}}
\end{gather}
\end{theorem}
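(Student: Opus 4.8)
The plan is to transport the identity \eqref{ghj.profdef} to the multidimensional Mellin transform in the price variable $p$, where the operator $\mu\mapsto\Pi$ becomes multiplication by an explicit symbol built from $\widehat h$. Throughout fix a contour $\Re z=c\in\mathop{\mathrm{int}}\mathbb R^n_+$ and write $|z|=z_1+\cdots+z_n$.

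\textbf{Step 1 (the Mellin symbol of the profit operator).} Supposing first that $\Pi$ has the form \eqref{ghj.profdef}, I would use the homogeneity $\Pi(p,p_0)=p_0\Pi(p/p_0,1)$ to reduce to $\Pi(p,1)=\int_{\mathbb R^n_+}(1-h(p\circ x))_+\,\mu(dx)$, multiply by $p^{z-I}$, integrate over $p\in\mathbb R^n_+$, interchange the two integrations, and substitute $q=p\circ x$, for which $p^{z-I}\,dp=x^{-z}q^{z-I}\,dq$ and $h(p\circ x)=h(q)$. This gives
\begin{equation*}
  \int_{\mathbb R^n_+} p^{z-I}\Pi(p,1)\,dp=\Bigl(\int_{\mathbb R^n_+} x^{-z}\mu(dx)\Bigr)\int_{\mathbb R^n_+} q^{z-I}(1-h(q))_+\,dq .
\end{equation*}
The last integral I would evaluate using the homogeneity of $h$: since $h$ is homogeneous of degree one with bounded level sets, $\int_{\mathbb R^n_+}q^{z-I}g(h(q))\,dq=|z|\,\Phi(1)\int_0^{\infty}g(t)t^{|z|-1}\,dt$ with $\Phi(1)=\int_{\{h\le 1\}}q^{z-I}\,dq$; taking $g(t)=e^{-t}$ identifies $\Phi(1)=\widehat h(z)/\Gamma(|z|+1)$, and then $g(t)=(1-t)_+$ yields $\int_{\mathbb R^n_+}q^{z-I}(1-h(q))_+\,dq=\widehat h(z)/\Gamma(|z|+2)$. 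Hence
\begin{equation*}
  \int_{\mathbb R^n_+} x^{-z}\mu(dx)=\rho_h(z)\,\bigl(\Gamma(z_1)\cdots\Gamma(z_n)\bigr)^{-1}\int_{\mathbb R^n_+} p^{z-I}\Pi(p,1)\,dp ,
\end{equation*}
with $\rho_h$ as in \eqref{ghj.Gdef}.

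\textbf{Step 2 (Mellin representation of the Laplace transform).} Next I would apply Mellin inversion of the exponential, $e^{-t}=(2\pi i)^{-1}\int_{c_j+i\mathbb R}\Gamma(z_j)t^{-z_j}\,dz_j$, to each factor of $e^{-s\cdot x}=\prod_j e^{-s_j x_j}$, interchange with the $\mu$-integration, and obtain
\begin{equation*}
  \int_{\mathbb R^n_+} e^{-s\cdot x}\mu(dx)=(2\pi i)^{-n}\int_{c+i\mathbb R^n} s^{-z}\,\Gamma(z_1)\cdots\Gamma(z_n)\Bigl(\int_{\mathbb R^n_+} x^{-z}\mu(dx)\Bigr)dz .
\end{equation*}
Substituting the last identity of Step 1, the factors $\Gamma(z_1)\cdots\Gamma(z_n)$ cancel and the right-hand side is exactly $G(s)$ of \eqref{ghj.Gdef}. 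This settles the ``only if'' direction, with the homogeneity and the two weighted integrability bounds on $\Pi(\cdot,1)$ coming directly from \eqref{ghj.profdef}, the two moment hypotheses on $\mu$, and (for the $L^2$ bound) a Mellin--Plancherel estimate applied to Step 1.

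\textbf{Step 3 (the converse) and the main obstacle.} For the ``if'' direction I would set $\widetilde\Pi(p,p_0)=\int_{\mathbb R^n_+}(p_0-h(p\circ x))_+\mu(dx)$, finite and homogeneous by the moment hypotheses; Steps 1--2 applied to $\widetilde\Pi$ show that the function assembled from $\widetilde\Pi$ via \eqref{ghj.Gdef} equals $\int e^{-s\cdot x}\mu(dx)=G(s)$, which by assumption is also the function assembled from $\Pi$. Injectivity of the Laplace transform together with injectivity of the Mellin transform on $\Re z=c$ (Mellin--Plancherel) then forces $\int p^{z-I}\widetilde\Pi(p,1)\,dp=\int p^{z-I}\Pi(p,1)\,dp$ there, hence $\widetilde\Pi(\cdot,1)=\Pi(\cdot,1)$ a.e., hence identically by continuity and then for all $p_0>0$ by homogeneity, so $\Pi=\widetilde\Pi$ is of the form \eqref{ghj.profdef}. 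I expect the real work to be analytic rather than algebraic: the Fubini interchanges and the Mellin inversions above are not absolutely convergent in general, so the argument must be run in the $L^2$/Mellin--Plancherel framework --- which is precisely why the hypotheses pair an $L^1$-type bound with an $L^2$-type bound both on $\Pi(\cdot,1)$ and on the moments of $\mu$, and why $\rho_h\in L^2\cap L^\infty(c+i\mathbb R^n)$ is imposed (so that multiplication by $\rho_h$ preserves $L^2(c+i\mathbb R^n)$); one must also know $\widehat h(z)\ne 0$ a.e. on the contour, which is exactly the $L^2$-injectivity condition of Theorem \ref{ghj.thminj}.
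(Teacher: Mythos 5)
The paper itself contains no proof of Theorem \ref{ghj.thmchar}: it is imported from \cite{Agaltsov2015d}, and Appendix \ref{ape.ghj} proves only Proposition \ref{ghj.propchar} and the two examples. So there is nothing in-paper to compare you against; judged on its own, your argument is correct and is almost certainly the intended route, because the formula \eqref{ghj.Gdef} is exactly what your Steps 1--2 manufacture. The computations check out: $q=p\circ x$ gives $p^{z-I}\,dp=x^{-z}q^{z-I}\,dq$; the homogeneity argument gives $\Phi(\tau)=\tau^{|z|}\Phi(1)$, hence $\int_{\mathbb R^n_+}q^{z-I}(1-h(q))_+\,dq=|z|\Phi(1)B(|z|,2)=\widehat h(z)/\Gamma(|z|+2)$; and feeding the resulting identity $\int p^{z-I}\Pi(p,1)\,dp=\bigl(\int x^{-z}\mu(dx)\bigr)\widehat h(z)/\Gamma(|z|+2)$ into the Cahen--Mellin representation of $e^{-s\cdot x}$ reproduces \eqref{ghj.Gdef} verbatim, with the $\Gamma(z_j)$ factors cancelling as you say. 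The converse via $L^2$-Mellin uniqueness is also structurally sound.

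Two points you leave as assertions deserve a line each. First, the necessity of the weighted $L^2$ bound on $\Pi(\cdot,1)$ does not actually need a Plancherel estimate: expand $\Pi(p,1)^2$ as a double integral against $\mu\otimes\mu$, bound the inner $p$-integral by $C\min\bigl(x^{-2c},(x')^{-2c}\bigr)\leq C\,x^{-c}(x')^{-c}$ (drop one factor $(1-h(p\circ x'))_+\leq 1$ and substitute $q=p\circ x$), and you are reduced to the \emph{first} moment hypothesis on $\mu$. Second, in Step 3 you divide by $\rho_h(z)$; you should say explicitly that $\rho_h$ never vanishes where it is defined (the Gamma factors have no zeros on $\Re z=c>0$) and that the hypothesis $\rho_h\in L^\infty(c+i\mathbb R^n)$ already presupposes $\widehat h(z)\neq 0$ a.e.\ on the contour, so the cancellation is legitimate and you do not separately need the injectivity criterion of Theorem \ref{ghj.thminj}. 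With these remarks, what remains is the routine justification of the Fubini and inversion interchanges in the $L^1\cap L^2$ framework, which is precisely what the paired integrability hypotheses on $\mu$ and $\Pi(\cdot,1)$ and the condition $\rho_h\in L^2\cap L^\infty$ are designed to support; you identify this correctly as the locus of the remaining work.
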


Thus, in view of Theorem \ref{ghj.thmchar}, the question of existence of a distribution of capacities corresponding to given production (or profit) function is reduced to checking whether function $G(s)$ defined by \eqref{ghj.Gdef} is the Laplace transform of some measure. This checking can be done using Bernstein-Bochner theorem on completely monotone functions (see \cite{Bochner1955}), or, what is more convenient, using local characterization conditions \eqref{hjm.Lapchar}, see \cite{Henkin1990a} for details.

On the other hand, in general, it is very difficult to find an analytic expression for function \eqref{ghj.Gdef}. In the following proposition we give a simple criterion for existence of a distribution of capacities for a given profit function in a particular case.

\begin{proposition}\label{ghj.propchar} Let $r \in [-1,0)$ and let $h(p \circ x)$ be defined by
\begin{equation}\label{ghj.hrdef}
  h(p \circ x) = \bigl( p_1^{-r} x_2^{-r} + \cdots + p_n^{-r} x_n^{-r} \bigr)^{-\frac 1 r}.
\end{equation}
Let $f \in C(\mathbb R^n_+)$ and suppose that
\begin{equation}
	\text{for any $A>0$ there exists $C_A>0$ such that $|f(x)| \leq C_A \cdot e^{A|x|}$, $x \in \mathbb R^n_+$}.\label{ghj.fdef}
\end{equation}
A function $\Pi(p,p_0)$ can be represented in the form 
\begin{equation}
  \begin{gathered}\label{ghj.phidef}
	\Pi(p,p_0) = \int_{\mathbb R^n_+} \bigl( p_0 - h(p \circ x) )_+ \varphi(x) \, dx, \\
	\varphi(x) = (-r)^{n-1} (x_1 \dots x_n)^{-r - 1} \int_0^{+\infty} t^{n-1} e^{-t} f( t x_1^{-r}, \dots, t x_n^{-r}) dt,
  \end{gathered}
\end{equation}
where $(\cdot)_+$ is defined in \eqref{hjm.a+}, if and only if
\begin{gather}
  \Pi(p,+0) = \tfrac{\partial \Pi}{\partial p_0}(p,+0) = 0, \quad p = (p_1,\dots,p_n) > 0, \label{ghj.Pilim}\\
  \Pi(p_1,\dots,p_n,p_0) = p_0 \Pi( \tfrac{p_1}{p_0},\dots,\tfrac{p_n}{p_0}, 1), \quad (p_1,\dots,p_n,p_0) > 0, \label{ghj.Pihomo}\\
  \begin{gathered}\label{ghj.d2Pi}
  \tfrac{\partial^2 \Pi}{\partial p_0^2} (p_1^{-\frac 1 r},\dots,p_n^{-\frac 1 r},1) = \int_{\mathbb R^2_+} e^{-p \cdot x} f(x) \, dx, \quad p = (p_1,\dots,p_n) > 0.
	\end{gathered}
\end{gather}
\end{proposition}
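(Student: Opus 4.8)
The plan rests on the observation that, since $r \in [-1,0)$, the componentwise map $x \mapsto y$ with $y_i = x_i^{-r}$ is a diffeomorphism of $\mathop{\mathrm{int}}\mathbb R^n_+$ onto itself which straightens the level hypersurfaces $\{x : h(p\circ x)=p_0\}$ of \eqref{ghj.hrdef} into the affine hyperplanes $\{y : p_1^{-r}y_1+\cdots+p_n^{-r}y_n = p_0^{-r}\}$; indeed $h(p\circ x)\le p_0$ is equivalent to $\sum_i p_i^{-r}x_i^{-r}\le p_0^{-r}$, i.e.\ to $\sum_i p_i^{-r}y_i\le p_0^{-r}$. Under this substitution the generalized Radon transform $\partial^2\Pi/\partial p_0^2 = \int_{h(p\circ x)=p_0}\varphi(x)\,\Omega(p,x)$ of \eqref{ghj.PitoRad} turns into the classical hyperplane Radon transform of $\varphi(y_1^{-1/r},\dots,y_n^{-1/r})\,|\partial x/\partial y|$, and the weight $(x_1\cdots x_n)^{-r-1}$ built into the definition of $\varphi$ in \eqref{ghj.phidef} is chosen precisely so that this product reduces to $(-r)^{-1}g(y)$ with $g(y)=\int_0^{+\infty}t^{n-1}e^{-t}f(ty)\,dt$ (all powers of $y_i$ cancel). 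It then remains to recognise that the classical hyperplane Radon transform of such a ``$\Gamma$-averaged'' function $g$ reproduces the Laplace transform of $f$, which yields \eqref{ghj.d2Pi}; the ``only if'' part then follows, and the ``if'' part is obtained by a uniqueness argument using \eqref{ghj.Pilim} and \eqref{ghj.Pihomo}.

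For the ``only if'' direction I would set $\widetilde\Pi(p,p_0)=\int_{\mathbb R^n_+}(p_0-h(p\circ x))_+\varphi(x)\,dx$ and first check it is finite for $(p,p_0)>0$: by \eqref{ghj.fdef} the function $g$ (hence $\varphi$) is well defined, locally bounded and of sub-exponential growth, and after $y_i=x_i^{-r}$ the integral runs over the bounded simplex $\{y\ge 0:\sum_i p_i^{-r}y_i\le p_0^{-r}\}$, where finiteness follows by choosing the parameter $A$ in \eqref{ghj.fdef} small enough. The homogeneity \eqref{ghj.Pihomo} is immediate from the homogeneity of degree one of $h(p\circ x)$ in $p$, and \eqref{ghj.Pilim} follows by dominated convergence, dominating $(p_0-h(p\circ x))_+|\varphi(x)|$ by $(1-h(p\circ x))_+|\varphi(x)|$ for $p_0\le 1$ and using that $\{x\in\mathop{\mathrm{int}}\mathbb R^n_+:h(p\circ x)=0\}=\varnothing$. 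The main step is \eqref{ghj.d2Pi}: differentiating once gives $\partial\widetilde\Pi/\partial p_0=\int_{h(p\circ x)\le p_0}\varphi(x)\,dx$, which under $y_i=x_i^{-r}$ equals $(-r)^{-1}\int_{\sum_i p_i^{-r}y_i\le p_0^{-r}}g(y)\,dy$; inserting the definition of $g$, interchanging the order of integration and substituting $w=ty$ in the inner integral turns this into $(-r)^{-1}\int_0^{+\infty}t^{-1}e^{-t}F(p_0^{-r}t)\,dt$ with $F(S)=\int_{\sum_i p_i^{-r}w_i\le S}f(w)\,dw$; differentiating in $p_0$ and decomposing $\mathbb R^n_+$ by the level sets of $w\mapsto\sum_i p_i^{-r}w_i$ yields $\partial^2\widetilde\Pi/\partial p_0^2(p,p_0)=p_0^{-1}\int_{\mathbb R^n_+}e^{-p_0^{r}\sum_i p_i^{-r}w_i}f(w)\,dw$; finally specialising to $p_0=1$ and replacing $p_i$ by $p_i^{-1/r}$ gives $\partial^2\widetilde\Pi/\partial p_0^2(p_1^{-1/r},\dots,p_n^{-1/r},1)=\int_{\mathbb R^n_+}e^{-p\cdot w}f(w)\,dw$, which is \eqref{ghj.d2Pi}.

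For the ``if'' direction, suppose $\Pi$ satisfies \eqref{ghj.Pilim}, \eqref{ghj.Pihomo}, \eqref{ghj.d2Pi} and let $\widetilde\Pi$ be as above; by the previous paragraph $\widetilde\Pi$ satisfies the same three conditions with the same $f$. Hence $\partial^2\Pi/\partial p_0^2(q_1^{-1/r},\dots,q_n^{-1/r},1)=\partial^2\widetilde\Pi/\partial p_0^2(q_1^{-1/r},\dots,q_n^{-1/r},1)$ for every $q>0$, and since $t\mapsto t^{-1/r}$ is a bijection of $(0,+\infty)$ we obtain $\partial^2\Pi/\partial p_0^2(p,1)=\partial^2\widetilde\Pi/\partial p_0^2(p,1)$ for all $p\in\mathop{\mathrm{int}}\mathbb R^n_+$. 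Because $\Pi$ and $\widetilde\Pi$ are positively homogeneous of degree one by \eqref{ghj.Pihomo}, their second derivatives in $p_0$ are positively homogeneous of degree $-1$ and are therefore determined by their restriction to $\{p_0=1\}$; thus $\partial^2\Pi/\partial p_0^2\equiv\partial^2\widetilde\Pi/\partial p_0^2$ on $\mathop{\mathrm{int}}\mathbb R^{n+1}_+$. Finally, for each fixed $p$, integrating twice in $p_0$ and using that by \eqref{ghj.Pilim} both $\Pi$, $\widetilde\Pi$ and their first $p_0$-derivatives vanish as $p_0\to+0$ gives $\Pi\equiv\widetilde\Pi$, i.e.\ $\Pi$ admits the representation \eqref{ghj.phidef}.

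The main obstacle is the analytic bookkeeping in the second step: one must justify the interchange of integrations and the differentiation under the integral sign on the unbounded domain $\{h(p\circ x)\le p_0\}$ (and on the resulting half-space), which is exactly where \eqref{ghj.fdef} and the ensuing control on $g$ and $\varphi$ are used; and one must verify the exact cancellation, against the Jacobian $|\partial x/\partial y|=(-r)^{-n}\prod_i y_i^{-(1+r)/r}$, of the weight $(-r)^{n-1}\prod_i(y_i^{-1/r})^{-r-1}=(-r)^{n-1}\prod_i y_i^{(1+r)/r}$ coming from $\varphi$, which leaves the clean kernel $(-r)^{-1}g(y)$. Everything else --- the homogeneity, the two vanishing conditions at $p_0=+0$, and the co-area decomposition identifying the Radon transform of $g$ with a Laplace transform of $f$ --- is routine.
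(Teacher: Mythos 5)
Your proposal is correct and follows essentially the same route as the paper's proof: the change of variables $y_i = x_i^{-r}$ followed by $w = ty$ to reduce $\tfrac{\partial^2\Pi}{\partial p_0^2}$ to the Laplace transform $p_0^{-1}\int_{\mathbb R^n_+} e^{-p_0^r P_r(p)\cdot w} f(w)\,dw$, and then the uniqueness argument equating second $p_0$-derivatives and integrating twice for the converse. Your explicit invocation of \eqref{ghj.Pilim} to fix the constants of integration in the sufficiency step is a slightly more careful rendering of what the paper leaves implicit, but it is not a different method.
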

The proof of Proposition \ref{ghj.propchar} is given in Appendix \ref{ape.ghj}.

In the following examples we show that the Cobb-Douglas and CES production functions admit different distributions of capacities corresponding to different elasticities of substitution at the micro-level, and we compute explicitly these distributions using Proposition \ref{ghj.propchar}.

\begin{example}\label{ghj.exCD} Consider the Cobb-Douglas production function $F_\text{CD}(l_1,l_2)$ defined in \eqref{hjm.FCDdef}. Let $r \in [-1,0)$. Then $F_\text{CD}(l_1,l_2)$ is the production function for the resource distribution problem \eqref{ghj.prmax}--\eqref{ghj.pru} with 
\begin{gather}
  h( p \circ x ) = ( p_1^{-r} x_1^{-r} + p_2^{-r} x_2^{-r} )^{-\frac 1 r}, \label{ghj.PiCDh}\\
  \begin{gathered}\label{ghj.PiCDmu}
  \mu(dx) = A_r \cdot x_1^{\alpha_1-1} x_2^{\alpha_2-1} dx, \\ A_r = (-r) A \tfrac{B(\alpha_1,\alpha_2) }{B(-\frac{\alpha_1}{r},-\frac{\alpha_2}{r})},
  \end{gathered}
\end{gather}
where $B(\cdot,\cdot)$ is the beta function. Demonstration is given in Appendix \ref{ape.ghj}.
\end{example}

\begin{example}\label{ghj.exCES} Consider the CES production function $F_\text{CES}(l_1,l_2)$ defined in \eqref{hjm.FCES} with $\rho \in (0,1)$. It is shown in \cite{Henkin1990a} that $F_\text{CES}(l_1,l_2)$ is the production function for the resource distribution problem \eqref{hjm.prmax}--\eqref{hjm.pru} without substitution at the micro-level. Besides, $F_\text{CES}(l_1,l_2)$ is the production function for the resource distribution problem \eqref{ghj.prmax}--\eqref{ghj.pru} with 
\begin{gather}
	h(p \circ x) = \bigl( (p_1 x_1)^{\frac{2\rho}{1+\rho}}+(p_2 x_2)^{\frac{2\rho}{1+\rho}}\bigr)^{\frac{1+\rho}{2\rho}}, \label{ghj.PiCESh} \\
	\begin{gathered}\label{ghj.PiCESmu}
    \mu(dx) = B_\rho \cdot (x_1 x_2)^{-\frac{\rho}{1+\rho} -1} \bigl( \alpha_1^{\frac{2}{1+\rho}}x_1^{-\frac{2\rho}{1+\rho}} + \alpha_2^{\frac{2}{1+\rho}} x_2^{-\frac{2\rho}{1+\rho}} \bigr)^{-\frac{\gamma}{1-\gamma}\frac{1+\rho}{2\rho}-1}, \\
    B_\rho = \tfrac{\gamma^{\frac{1}{1-\gamma}}}{1-\gamma}  \tfrac{2^{b-2}}{\pi} (\alpha_1 \alpha_2)^{\frac{1}{1+\rho}} bB(\tfrac b 2, \tfrac b 2), \quad b = \tfrac{\gamma}{1-\gamma} \tfrac{1+\rho}{\rho}.
    \end{gathered}
\end{gather}
where $B(\cdot,\cdot)$ is the beta function. Demonstration is given in Appendix \ref{ape.ghj}. 
\end{example}

\section{Identification of production model}\label{sec.ide} We consider the problem of identification for production models in contemporary national economies. As it was noted in Section \ref{sec.int}, globalization of the world economy leads to competition of home-made goods with their imported analogs on national domestic markets.

For example, consider the case of contemporary Russian economy. During the period when the Central Bank of Russia maintains the stable ruble exchange rate, imported goods dominate and substitute home-made products. The reason is that the inflation rate in Russia is higher than in developed capitalist countries. The opposite situation, when domestic goods dominate and substitute the imported products occurs after economic crises leading to ruble weakening. These periods have been interwining in the last decades, facilitating standartization. As a result, production have become adapted to market conditions due to substitutability of production factors at the micro-level. 

In a similar way with Section \ref{sec.ghj}, we describe substitutability of production factors at the micro-level using the unit cost function
\begin{equation}\label{ide.hdef}
  \begin{gathered}
  h(p \circ x) = h( p_1 x_1, \dots, p_n x_n),\\
  \text{$h(x) > 0$ and $h(\lambda x) = \lambda h(x)$ for $\lambda>0$, $x \in \mathbb R^n_+$}.
  \end{gathered}
\end{equation}
In general, in economic statistics only the data of production volumes and prices is available. We use the following notations:
\begin{equation}\label{ide.tsas}
 \begin{aligned}
	& \text{$y(t) \geq 0$: output of the industry at time $t$}, \\
	& \text{$p_0(t) > 0$: unit price of the output at time $t$},\\
	& \text{$p(t) = (p_1(t),\dots,p_n(t)) > 0$: unit prices of CUPF at time $t$}.\\
 \end{aligned}
\end{equation}
Observable time series $\{ y(t), p_0(t), p(t) \mid t = 1,\dots,T \}$ are \textit{compatible with the unit cost function} $h(p \circ x)$ if the following \textit{moment problem} is solvable:
\begin{problem}[Moment problem]\label{ide.mompr} Given time series $\{ y(t), p_0(t), p(t) \mid t = 1,\dots,T \}$ satisfying \eqref{ide.tsas}, find a non-negative absolutely continuous measure $\mu(dx)$ such that
\begin{equation}\label{ide.mompreq}
  \int_{\mathbb R^n_+} \theta\bigl( p_0(t) - h( p(t) \circ x ) \bigr) \mu(dx) = y(t) \quad (t = 1,\dots,T),
\end{equation}
where $\theta$ is defined in \eqref{hjm.thetadef}, and $h(x)$ satisfies \eqref{ide.hdef}.
\end{problem}

A criterion for solvability of Problem \ref{ide.mompr} was proposed in \cite{Shananin1999}. In order to formulate this criterion, we need to introduce some notations. 

Hypersurfaces
\begin{equation}\label{ide.hypers}
  \bigl\{ x \mid h (p(t) \circ x ) = p_0(t) \bigr\} \quad (t=1,\dots,T),
\end{equation}
determine a partition of $\mathbb R^n_+$. This partition depends only on vectors $\widehat p(t) = \tfrac{1}{p_0(t)} p(t)$, $t = 1$, \dots, $T$, since function $h(p \circ x)$ is positive homogeneous according to \eqref{ide.hdef}. Denote by $\Lambda_h(\widehat p) = \Lambda_h\{ \widehat p(t) \mid t = 1,\dots,T \}$ the set of domains of this partition:
\begin{equation}\label{ide.Lamdef}
  \begin{gathered}
  \Lambda_h(\widehat p) = \Lambda_h\{ \widehat p(t) \mid t = 1,\dots,T \} = \bigl\{ G_S \mid S \subset \{1,\dots,T\} \bigr\}, \\
  G_S = \bigl\{ x \in \mathop{\mathrm{int}} \mathbb R^n_+ \mid  \text{$h(p(t) \circ x) > p_0(t)$ for any $t \in S$}  \\
  \text{and $h(p(t) \circ x) < p_0(t)$ for any $t \in \{1,\dots,T\} \setminus S$}  \bigr\}.
  \end{gathered}
\end{equation}

For each domain $G \in \Lambda_h(\widehat p)$ we define its \textit{spectrum} $Z(G)$ as follows:
\begin{equation}\label{ide.specdef}
  \begin{gathered}
	Z(G) = \bigl( Z_1(G), \dots, Z_T(G) \bigr),\quad G \in \Lambda_h( \widehat p ),\\
	Z_t(G) = \begin{cases}
				\text{$1$, if $p_0(t) > h (p(t) \circ x)$ for any $x \in G$},\\
				\text{$0$, if $p_0(t) < h(p(t) \circ x)$ for any $x \in G$},
	         \end{cases}\quad (t = 1,\dots,T).
  \end{gathered}
\end{equation}
Next,
\begin{equation}\label{ide.Gdef}
  \begin{gathered}
  \text{let $\Gamma_h(\widehat p) = \Gamma_h\{ \widehat p(t) \mid t =1,\dots,T \}$ be the convex cone in $\mathbb R^T$}\\
  \text{spanned by} \; \{ Z(G) \mid G \in \Lambda_h( \widehat p ) \}.
  \end{gathered}
\end{equation}

\begin{proposition}[see \cite{Shananin1999}]\label{ide.propsol} Problem \ref{ide.mompr} is solvable if and only if
\begin{equation}\label{ide.yhinG}
  y = ( y(1),\dots,y(T) ) \in \Gamma_h(\widehat p).
\end{equation}
\end{proposition}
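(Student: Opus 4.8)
The plan is to reformulate the moment problem \eqref{ide.mompreq} as a linear feasibility problem over an infinite-dimensional cone of measures, and then to push this cone down to the finite-dimensional cone $\Gamma_h(\widehat p)\subseteq\mathbb R^T$ by exploiting that the integrands $\theta(p_0(t)-h(p(t)\circ x))$ are, up to $\mu$-null sets, constant on each domain $G\in\Lambda_h(\widehat p)$. First I would observe that for any non-negative absolutely continuous $\mu$, the hypersurfaces \eqref{ide.hypers} have measure zero, so the left-hand side of \eqref{ide.mompreq} equals $\sum_{G\in\Lambda_h(\widehat p)} Z_t(G)\,\mu(G)$, where $\mu(G)\ge 0$. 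Hence if the moment problem is solvable, then $y=\sum_G \mu(G)\,Z(G)$ is a non-negative combination of the spectra, i.e. $y\in\Gamma_h(\widehat p)$; this gives the ``only if'' direction immediately. Note that $\Lambda_h(\widehat p)$ is finite (it is indexed by subsets $S\subseteq\{1,\dots,T\}$), so $\Gamma_h(\widehat p)$ is a genuine finitely generated (hence closed) convex cone and no closure issues arise.

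For the ``if'' direction, suppose $y=\sum_{G\in\Lambda_h(\widehat p)}\lambda_G\,Z(G)$ with $\lambda_G\ge 0$. I would construct $\mu$ domain by domain: for each domain $G$ with $\lambda_G>0$, pick any non-negative $\psi_G\in L^1(G)$ supported in $G$ with $\int_G\psi_G(x)\,dx=\lambda_G$ — for instance a bump function, rescaled — and set $\mu(dx)=\bigl(\sum_G\psi_G(x)\bigr)dx$. This $\mu$ is a non-negative absolutely continuous measure, and by construction $\int_{\mathbb R^n_+}\theta(p_0(t)-h(p(t)\circ x))\,\mu(dx)=\sum_G\lambda_G Z_t(G)=y(t)$ for every $t$, using again that the $h$-hypersurfaces carry no mass and that $\theta(p_0(t)-h(p(t)\circ x))=Z_t(G)$ for $x\in G$ by the definitions \eqref{ide.Lamdef}, \eqref{ide.specdef}. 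One should check that at least one domain $G$ is non-empty and open so that such $\psi_G$ exists: the domain $G_\emptyset$ (all inequalities $h(p(t)\circ x)<p_0(t)$) contains a neighbourhood of the origin since $h$ is positive homogeneous of degree one and the level sets are bounded, hence shrink toward $0$ as $p_0$ is fixed; more generally each non-empty $G_S$ is open by continuity of $h$.

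The main obstacle is the bookkeeping needed to make the equivalence ``$x\in G$ $\Rightarrow$ $\theta(p_0(t)-h(p(t)\circ x))=Z_t(G)$'' airtight on the boundaries: a priori the measure could charge the union of hypersurfaces \eqref{ide.hypers}, on which the indicator $\theta$ is genuinely ambiguous. This is precisely why Problem \ref{ide.mompr} restricts to \emph{absolutely continuous} $\mu$; I would make explicit that each hypersurface $\{x:h(p(t)\circ x)=p_0(t)\}$ is a Lebesgue-null set (it is the level set of a locally Lipschitz, positively homogeneous function with bounded level sets, so it is an $(n-1)$-dimensional set), whence $\mu$ of the exceptional set is zero and the integral splits cleanly over the open domains. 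Once that measure-theoretic point is dispatched, both directions are short linear-algebra arguments, and no convex-duality machinery (Farkas, separation) is actually required because the cone $\Gamma_h(\widehat p)$ is defined directly as the conical hull of the finitely many spectra $Z(G)$.
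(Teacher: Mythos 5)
Your argument is correct, but note that the paper does not prove Proposition \ref{ide.propsol} at all --- it is quoted from \cite{Shananin1999} --- so there is no in-paper proof to compare against; what you give is the natural decomposition argument (absolute continuity kills the null union of hypersurfaces \eqref{ide.hypers}, so the left side of \eqref{ide.mompreq} collapses to $\sum_{G}Z_t(G)\mu(G)$, and conversely bump functions placed in the open non-empty domains realize any non-negative combination of spectra). One small inaccuracy: your justification that $G_\varnothing$ contains a neighbourhood of the origin invokes boundedness of the level sets of $h$, which is assumed in \eqref{ghj.hprop} but not in \eqref{ide.hdef}; this is harmless, since all you actually need is that every domain $G$ carrying a positive coefficient is non-empty and open, which holds because $\Lambda_h(\widehat p)$ in \eqref{ide.Lamdef} consists of sets cut out by strict inequalities and the cone $\Gamma_h(\widehat p)$ is spanned only by spectra of the non-empty ones. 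A cleaner route to the null-set claim is homogeneity alone: for fixed $t$ each ray from the origin meets $\{x \mid h(p(t)\circ x)=p_0(t)\}$ in at most one point, so the level set is Lebesgue-null by integration in polar coordinates, without any Lipschitz hypothesis on $h$.
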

The following proposition establishes an easily verifiable necessary condition for solvability of Problem \ref{ide.mompr}. Put
\begin{equation}\label{ide.wdef}
 \begin{gathered}
	w(t) = \bigl( w_G(t) \mid G \in \Lambda_h( \widehat p ) \bigr) \quad (t = 1,\dots,T),\\
	w_G(t) = \begin{cases}
				1, & \text{if $Z_t(G) = 1$},\\
				0, & \text{if $Z_t(G) = 0$}.
	         \end{cases}
 \end{gathered}
\end{equation}

\begin{proposition}[Necessary condition for solvability of Problem \ref{ide.mompr}] \label{ide.propnes} Let $y(t)$, $w(t)$ be the same as in \eqref{ide.mompreq}, \eqref{ide.wdef}. A necessary condition for solvability of  Problem \ref{ide.mompr} is that for any $\Omega_1$, $\Omega_2 \subseteq \{1,\dots,T\}$ the inequality
\begin{equation}\label{ide.w1>w2}
  \sum_{t \in \Omega_1} w(t) \geq \sum_{t \in \Omega_2} w(t),
\end{equation}
implies the inequality
\begin{equation}\label{ide.y1>y2}
  \sum_{t \in \Omega_1} y(t) \geq \sum_{t \in \Omega_2} y(t).
\end{equation}
\end{proposition}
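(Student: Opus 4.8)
The plan is to deduce the necessary condition directly from the solvability criterion of Proposition \ref{ide.propsol}, namely that $y \in \Gamma_h(\widehat p)$. Suppose the moment problem is solvable, so there is a non-negative absolutely continuous measure $\mu$ with $\int_{\mathbb R^n_+} \theta(p_0(t) - h(p(t)\circ x))\,\mu(dx) = y(t)$ for all $t$. The first step is to observe that the domains $G \in \Lambda_h(\widehat p)$ partition $\mathop{\mathrm{int}}\mathbb R^n_+$ up to a set of measure zero (the union of the finitely many hypersurfaces \eqref{ide.hypers} is null), so we may write $\mu(dx) = \sum_{G \in \Lambda_h(\widehat p)} \mu_G$ where $\mu_G$ is the restriction of $\mu$ to $G$. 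Put $m_G = \mu_G(G) = \mu(G) \geq 0$. Since $h(p(t)\circ x)$ is either strictly above or strictly below $p_0(t)$ throughout each $G$ (that is exactly the content of the spectrum $Z(G)$ and of $w_G(t)$), the indicator $\theta(p_0(t) - h(p(t)\circ x))$ equals the constant $w_G(t)$ on $G$, and therefore
\begin{equation*}
  y(t) = \sum_{G \in \Lambda_h(\widehat p)} w_G(t)\, m_G \qquad (t = 1,\dots,T).
\end{equation*}

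The second step is purely combinatorial. Fix $\Omega_1, \Omega_2 \subseteq \{1,\dots,T\}$ with $\sum_{t \in \Omega_1} w(t) \geq \sum_{t \in \Omega_2} w(t)$, where the inequality is componentwise over $G \in \Lambda_h(\widehat p)$; that is, $\sum_{t \in \Omega_1} w_G(t) \geq \sum_{t \in \Omega_2} w_G(t)$ for every $G$. Summing the displayed identity over $t \in \Omega_i$ and exchanging the order of summation gives
\begin{equation*}
  \sum_{t \in \Omega_i} y(t) = \sum_{G \in \Lambda_h(\widehat p)} \Bigl( \sum_{t \in \Omega_i} w_G(t) \Bigr) m_G \qquad (i = 1,2).
\end{equation*}
Subtracting the $i=2$ line from the $i=1$ line and using $m_G \geq 0$ together with the componentwise hypothesis yields $\sum_{t \in \Omega_1} y(t) - \sum_{t \in \Omega_2} y(t) = \sum_G \bigl( \sum_{t \in \Omega_1} w_G(t) - \sum_{t \in \Omega_2} w_G(t) \bigr) m_G \geq 0$, which is exactly \eqref{ide.y1>y2}.

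There is essentially no hard analytic obstacle here; the proof is a decomposition of the measure along the partition plus a sign argument, and the main point to get right is the measure-zero issue — making sure the boundary hypersurfaces carry no $\mu$-mass, which holds because $\mu$ is assumed absolutely continuous. One could alternatively phrase the whole argument on the level of the cone $\Gamma_h(\widehat p)$: condition \eqref{ide.yhinG} says $y = \sum_G m_G Z(G)$ with $m_G \geq 0$, and since $Z_t(G) = w_G(t)$ the two formulations coincide; then \eqref{ide.w1>w2} $\Rightarrow$ \eqref{ide.y1>y2} is the statement that the linear functional $y \mapsto \sum_{t\in\Omega_1} y(t) - \sum_{t\in\Omega_2} y(t)$ is non-negative on the generators of $\Gamma_h(\widehat p)$, hence on the whole cone. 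I would present the cone version since it makes the role of Proposition \ref{ide.propsol} transparent and avoids re-deriving the decomposition of $\mu$.
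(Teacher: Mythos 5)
Your proof is correct and follows essentially the same route as the paper: decompose the moment equation over the domains $G \in \Lambda_h(\widehat p)$ to get $\sum_{t\in\Omega_i} y(t) = \sum_G \mu(G)\sum_{t\in\Omega_i} w_G(t)$, then conclude from the non-negativity of $\mu$. Your explicit attention to the boundary hypersurfaces being $\mu$-null and the cone reformulation are fine additions but do not change the argument.
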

Proposition \ref{ide.propnes} is proved in Appendix \ref{ape.ide}.

Efficient application of Proposition \ref{ide.propsol} to identification of the generalized Houthakker-Johansen requires investigation of properties of polyhedral cone $\Gamma_h (\widehat p)$.

\begin{proposition}\label{ide.propintnorm} Each face of cone $\Gamma_h (\widehat p )$ admits a non-zero normal vector with integer coordinates.
\end{proposition}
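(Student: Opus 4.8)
The plan is to show that the cone $\Gamma_h(\widehat p)$ is a \emph{rational} polyhedral cone, i.e.\ it is generated by finitely many vectors with integer (equivalently, rational) coordinates; once this is established, the claim about faces follows from elementary polyhedral geometry. By definition \eqref{ide.Gdef}, $\Gamma_h(\widehat p)$ is the convex cone spanned by the finite set $\{ Z(G) \mid G \in \Lambda_h(\widehat p)\}$. Each spectrum $Z(G) = (Z_1(G),\dots,Z_T(G))$ has, by \eqref{ide.specdef}, all of its coordinates equal to $0$ or $1$. Hence the generators of $\Gamma_h(\widehat p)$ are themselves integer vectors (in fact $0$–$1$ vectors), so $\Gamma_h(\widehat p)$ is a finitely generated cone over $\mathbb Z$.

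The first step is therefore to record that $\Gamma_h(\widehat p) = \operatorname{cone}\{v_1,\dots,v_N\}$ with each $v_k \in \{0,1\}^T \subset \mathbb Z^T$. The second step invokes the Minkowski–Weyl theorem: a finitely generated cone is polyhedral, so $\Gamma_h(\widehat p) = \{ z \in \mathbb R^T \mid \langle a_j, z\rangle \geq 0,\ j = 1,\dots,M\}$ for some finite collection of vectors $a_j$. The third step is the standard fact from the theory of rational polyhedra: a cone generated by rational vectors admits a description by finitely many rational inequalities — one can take the $a_j$ to span the hyperplanes through $T-1$ linearly independent generators among $v_1,\dots,v_N$, and the normal to the linear span of rational vectors can itself be chosen rational (solve the homogeneous linear system $\langle a, v_{k_i}\rangle = 0$ over $\mathbb Q$). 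Clearing denominators turns each such $a_j$ into an integer vector. Each (facet-defining) face of $\Gamma_h(\widehat p)$ is then $\{ z \in \Gamma_h(\widehat p) \mid \langle a_j, z\rangle = 0\}$ for one of these integer $a_j$, and lower-dimensional faces are intersections of facets, which again lie in rational — hence, after scaling, integer — hyperplanes; any such face admits a nonzero normal vector among these integer vectors. This gives the conclusion.

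The only mild subtlety — and the place I would be most careful — is the passage from ``generated by rational vectors'' to ``cut out by rational halfspaces,'' since a priori the Minkowski–Weyl theorem only guarantees \emph{some} defining inequalities. The clean way to handle this is the explicit one: enumerate all $(T-1)$-element subsets of the generator set that are linearly independent, for each solve the rational linear system for a normal vector, orient it so that all generators satisfy $\langle a, v_k\rangle \geq 0$ (discarding the subset if no consistent orientation exists), and observe that the resulting finite list of integer functionals suffices to describe $\Gamma_h(\widehat p)$ because every facet of a full-dimensional cone is spanned by generators, while faces of a non-full-dimensional cone lie in its linear span $L$, which is again rational, so one adds to the list a rational basis of $L^{\perp}$. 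Each face then has a nonzero integer normal, as required; this completes the proof.
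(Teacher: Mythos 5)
Your proof is correct and takes essentially the same route as the paper: both rest on the observation that the generators $Z(G)$ are $\{0,1\}$-vectors, so the hyperplane spanned by $T-1$ linearly independent spectra has a rational, hence (after clearing denominators) integer, normal --- the paper extracts it as the coefficient vector of a determinant equation, you as a solution of the rational homogeneous system, which is the same computation. Your additional care with non-full-dimensional cones and lower-dimensional faces addresses a point the paper glosses over, but does not change the substance of the argument.
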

Proposition \ref{ide.propintnorm} is proved in Appendix \ref{ape.ide}.

The following proposition shows that the necessary condition of Proposition \ref{ide.propnes} is also sufficient for solvability of Problem \ref{ide.mompr} if and only if each face of cone $\Gamma_h ( \widehat p )$ admits a normal vector with coordinates in $\{-1,0,1\}$. 

Let $\Gamma$ be a polyhedral cone, i.e. a convex cone spanned by a finite set of points. The dual cone $\Gamma^*$ is defined according to \eqref{agr.K*def}. The edges of the dual cone $\Gamma^*$ are normals to faces of $\Gamma$.

\begin{definition}\label{ide.defdisconv} Polyhedral cone $\Gamma$ is called \textit{discretely convex} if polyhedrons
\begin{align*}
  \Gamma  & \cap \{ y = (y_1,\dots,y_T) \mid -1 \leq y_j \leq 1, \; j = 1,\dots, T\},\\
  \Gamma^* & \cap \{ y = (y_1,\dots,y_T) \mid -1 \leq y_j \leq 1, \; j = 1,\dots,T\}
\end{align*}
are discretely convex, i.e. all their vertices have coordinates in $\{-1,0,1\}$. Here $\Gamma^*$ denotes the dual cone to $\Gamma$, defined according to \eqref{agr.K*def}.
\end{definition}
One can see that polyhedral cone $\Gamma$ is discretely convex if and only if:
  \begin{enumerate}
   \item[(i)] $\Gamma$ is spanned by a finite set of vectors with coordinates in $\{-1,0,1\}$;
   \item[(ii)] each face of $\Gamma$ admits a non-zero normal vector with coordinates in $\{-1,0,1\}$.
  \end{enumerate}

\begin{proposition}\label{ide.propsuf} The necessary condition of Proposition \ref{ide.propnes} is also  sufficient for solvability of Problem \ref{ide.mompr} if and only if the polyhedral cone $\Gamma_h (\widehat p)$ is discretely convex.
\end{proposition}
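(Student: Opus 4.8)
The plan is to translate the whole statement into conic duality, using Proposition~\ref{ide.propsol} to replace "solvability of the moment problem" by the membership $y\in\Gamma_h(\widehat p)$, and to recognise the necessary condition of Proposition~\ref{ide.propnes} as a weaker system of inequalities cutting out a cone $C\supseteq\Gamma_h(\widehat p)$; the point will then be that $C=\Gamma_h(\widehat p)$ exactly when $\Gamma_h(\widehat p)$ is discretely convex.

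First I would record the linear algebra behind the data. Write $\Lambda_h(\widehat p)=\{G_1,\dots,G_N\}$, form the matrix $M\in\{0,1\}^{N\times T}$ with $M_{kt}=Z_t(G_k)$, so that the rows of $M$ are the spectra $Z(G_k)$ and hence $\Gamma:=\Gamma_h(\widehat p)=\{M^{\!\top}\lambda\mid\lambda\in\mathbb R^N_{\ge 0}\}$, while the columns of $M$ are precisely the vectors $w(t)$ of~\eqref{ide.wdef}. For $\Omega_1,\Omega_2\subseteq\{1,\dots,T\}$ put $v=\chi_{\Omega_1}-\chi_{\Omega_2}\in\{-1,0,1\}^T$ (with $\chi_\Omega$ the indicator vector); then $\sum_{t\in\Omega_1}w(t)-\sum_{t\in\Omega_2}w(t)=Mv$ and $\sum_{t\in\Omega_1}y(t)-\sum_{t\in\Omega_2}y(t)=v\cdot y$, and $Mv\ge 0$ is exactly $v\in\Gamma^*$. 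Since every $v\in\{-1,0,1\}^T$ arises from some such pair, the necessary condition of Proposition~\ref{ide.propnes} is equivalent to
\[
 v\cdot y\ge 0\ \ \text{for all }v\in\{-1,0,1\}^T\cap\Gamma^*,
 \qquad\text{i.e.}\qquad
 y\in C:=\bigl(\operatorname{cone}(\{-1,0,1\}^T\cap\Gamma^*)\bigr)^{\!*}.
\]
Because $\Gamma\subseteq\mathbb R^T_{\ge 0}$ we have $e_1,\dots,e_T\in\Gamma^*\cap\{-1,0,1\}^T$, so the inequalities $y_t\ge 0$ are among the necessary conditions; thus $C\subseteq\mathbb R^T_{\ge 0}$ and the sign constraint on the time series is automatically incorporated, so no separate intersection with the positive orthant is needed below.

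Now combine with Proposition~\ref{ide.propsol}: the moment problem is solvable iff $y\in\Gamma$, and one always has $\Gamma\subseteq C$ (fewer defining inequalities). Hence the necessary condition of Proposition~\ref{ide.propnes} is sufficient for solvability, for every admissible time series with the given prices, if and only if $C=\Gamma$. The cones $\Gamma$, $\Gamma^*$ and $\operatorname{cone}(\{-1,0,1\}^T\cap\Gamma^*)$ are polyhedral, hence closed, so the bipolar theorem applies and gives: $C=\Gamma=\Gamma^{**}$ if and only if $\operatorname{cone}(\{-1,0,1\}^T\cap\Gamma^*)=\Gamma^*$, that is, iff $\Gamma^*$ is the conic hull of its points with coordinates in $\{-1,0,1\}$.

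It remains to identify this last property with discrete convexity of $\Gamma=\Gamma_h(\widehat p)$. Since $\Gamma$ is spanned by the vectors $Z(G)\in\{0,1\}^T\subseteq\{-1,0,1\}^T$, condition~(i) of discrete convexity holds automatically, so by the characterisation recalled after Definition~\ref{ide.defdisconv}, $\Gamma$ is discretely convex iff condition~(ii) holds: every face — equivalently every facet, whose normal ray is an extreme ray of $\Gamma^*$ — carries a nonzero normal with coordinates in $\{-1,0,1\}$; by Proposition~\ref{ide.propintnorm} such a facet normal is rescalable to $\{-1,0,1\}^T$ exactly when the corresponding extreme ray of $\Gamma^*$ contains a $\{-1,0,1\}$-vector, and summing over extreme rays this holds for all facets iff $\Gamma^*=\operatorname{cone}(\{-1,0,1\}^T\cap\Gamma^*)$. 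Tracing the chain: if $\Gamma$ is discretely convex then $C=\Gamma$, so the necessary condition is sufficient; conversely, if $\Gamma$ is not discretely convex then $\operatorname{cone}(\{-1,0,1\}^T\cap\Gamma^*)$ is a proper subcone of $\Gamma^*$ (it misses an extreme ray), hence $C\supsetneq\Gamma$ by strict duality, and any $y\in C\setminus\Gamma$ satisfies the necessary condition while, by Proposition~\ref{ide.propsol}, giving an unsolvable moment problem. The delicate point is exactly this last bookkeeping — matching "each face of $\Gamma$" with "each extreme ray of $\Gamma^*$", and treating the case where $\Gamma_h(\widehat p)$ fails to be full-dimensional (restricting to $\operatorname{span}\Gamma_h(\widehat p)$, where one also needs $\{-1,0,1\}^T\cap\Gamma^*$ to span the lineality space of $\Gamma^*$); everything else is a routine application of conic duality.
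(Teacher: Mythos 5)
Your proof is correct and follows essentially the same route as the paper's: both reduce the statement via Proposition \ref{ide.propsol} to the identity between $\Gamma_h(\widehat p)$ and the cone cut out by the $\{-1,0,1\}$-normal inequalities encoding the necessary condition, and then match that identity with discrete convexity through the face/normal correspondence of Proposition \ref{ide.propintnorm}; your version merely packages the paper's hand-checked inequalities as an explicit bipolar-theorem argument. Your remark that $C\subseteq\mathbb R^T_{\geq 0}$ (so any $y\in C\setminus\Gamma$ is an admissible time series) makes explicit a point the paper leaves implicit, and the full-dimensionality caveat you flag is likewise glossed over in the paper.
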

Proposition \ref{ide.propsuf} is proved in Appendix \ref{ape.ide}.

\begin{example} Consider the CES unit cost function $h(p \circ x)$ given by
\begin{equation}\label{ide.CESdef}
  h(p \circ x) = \bigl(p_1^{-\rho} x_1^{-\rho} + p_2^{-\rho} x_2^{-\rho}\bigr)^{-\frac 1 \rho}, \quad \rho \in [-1,0) \cup (0,+\infty).
\end{equation}
It is shown in \cite{Molchanov2013a} that in this case cone $\Gamma_h(\widehat p) = \Gamma_h\{ \widehat p(t) \mid t = 1,\dots, T\}$ is always discretely convex for $T \leq 5$. However, for any $T \geq 6$ one can find a cone $\Gamma_h(\widehat p) = \Gamma_h\{ \widehat p(t) \mid t=1,\dots,T\}$ which is not discretely convex. See \cite{Molchanov2013a} for more details.
\end{example}

\section{Estimation of micro-level elasticity of substitution and related combinatorial problems}\label{sec.ele}

\subsection{The problem of estimation of micro-level elasticity of subsitution}\label{ele.est}

We consider the problem of estimation of elasticity of substitution for the CES unit cost function \eqref{ide.CESdef} at the micro-level. Recall that $\rho$ is related to elasticity of substitution $\sigma$ by $\sigma = \tfrac{1}{1+\rho}$.

\begin{problem}\label{ele.mompr} Given the time series $\{ y(t), p_0(t), p(t) \mid t = 1,\dots,T\}$ of outputs and prices satisfying \eqref{ide.tsas}, find the values of parameter $\rho \in [-1,0) \cup (0,+\infty)$ for which the moment problem
\begin{equation}\label{ele.mompr1}
  \int_{\mathbb R^2_+} \theta \bigl( p_0(t) -  \bigl( p_1(t)^{-\rho} x_1^{-\rho} + p_2(t)^{-\rho} x_2^{-\rho}  \bigr)^{-\frac 1 \rho} \bigr) \mu(dx) = y(t), \quad (t = 1,\dots,T)
\end{equation}
is solvable in the class of non-negative absolutely continuous measures $\mu(dx)$.
\end{problem}

According to Proposition \ref{ide.propsol}, $\rho$ is a solution of Problem \ref{ele.mompr} if and only if 
\begin{gather}
  y = \bigl( y(1), \dots, y(T) \bigr) \in \Gamma_\rho(\widehat p), \notag \\
  \Gamma_\rho(\widehat p) \overset{def}{=\joinrel=} \Gamma_h(\widehat p) = \Gamma_h\{\widehat p(t) \mid t = 1,\dots,T\}, \quad \text{$h$ is defined in \eqref{ide.CESdef}}, \label{ele.Grhodef}\\
  \widehat p(t) = ( \widehat p_1(t), \widehat p_2(t) ), \quad \widehat p_1(t) = \tfrac{p_1(t)}{p_0(t)}, \quad \widehat p_2(t) = \tfrac{p_2(t)}{p_0(t)}. \label{ele.phatdef}
\end{gather}
where $\Gamma_h(\widehat p) = \Gamma_h\{\widehat p(t) \mid t = 1,\dots,T\}$ is defined in \eqref{ide.Gdef}.

The lines 
\begin{equation}\label{ele.part}
  \bigl( \widehat p_1(t)^{-\rho} x_1^{-\rho} + \widehat p_2(t)^{-\rho} x_2^{-\rho} \bigr)^{-\frac 1 \rho} = 1 \quad (t = 1,\dots,T),
\end{equation}
determine a partition of $\mathbb R^2_+$. The cone $\Gamma_\rho (\widehat p)$ is invariant with respect to variations of parameter $\rho$ which do not change the set of spectral vectors \eqref{ide.specdef} of this partition. Variation of parameter $\rho$ can affect solvability of the moment problem \eqref{ele.mompr1}  if one of the lines of \eqref{ele.part} passes through the intersection point of two other lines. Besides, the three lines
\begin{equation}
 \begin{gathered}
	\bigl( \widehat p_1(t_1)^{-\rho} x_1^{-\rho} + \widehat p_2(t_1)^{-\rho} x_2^{-\rho} \bigr)^{-\frac 1 \rho} = 1,\\
	\bigl( \widehat p_1(t_2)^{-\rho} x_1^{-\rho} + \widehat p_2(t_2)^{-\rho} x_2^{-\rho} \bigr)^{-\frac 1 \rho} = 1,\\
	\bigl( \widehat p_1(t_3)^{-\rho} x_1^{-\rho} + \widehat p_2(t_3)^{-\rho} x_2^{-\rho} \bigr)^{-\frac 1 \rho} = 1,\\
 \end{gathered}
\end{equation}
have a common intersection point if and only if $\rho$ is a solution of equation
\begin{equation}\label{ele.detsys}
 \begin{vmatrix}
	1 & 1 & 1 \\
	\widehat p_1(t_1)^{-\rho} & \widehat p_1(t_2)^{-\rho} & \widehat p_1(t_3)^{-\rho} \\
	\widehat p_2(t_1)^{-\rho} & \widehat p_2(t_2)^{-\rho} & \widehat p_2(t_3)^{-\rho}
 \end{vmatrix} = 0.
\end{equation}

\begin{proposition}[see \cite{Molchanov2013b}]\label{ele.3lines} For any $1 \leq t_1 < t_2 < t_3 \leq T$ there exists at most one $\rho \in [-1,0)\cup(0,+\infty)$ satifsying \eqref{ele.detsys}. 
\end{proposition}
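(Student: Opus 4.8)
The plan is to reduce the condition \eqref{ele.detsys}, for $\rho\neq0$, to the equality of two explicit exponential ratios, and then to show that the logarithm of their quotient is strictly monotone in $\rho$, so that the equality holds for at most one $\rho\in\mathbb R\setminus\{0\}$, which contains $[-1,0)\cup(0,+\infty)$. Write $a_k=\widehat p_1(t_k)$, $b_k=\widehat p_2(t_k)$ and let $D(\rho)$ be the determinant in \eqref{ele.detsys}. Degenerate configurations (repeated or proportional price vectors) are disposed of directly: after the column operation below one sees that $D$ either vanishes only at $\rho=0$, in which case there is nothing to prove, or is identically zero, in which case the data is excluded. In the remaining case $a_1,a_2,a_3$ are distinct, $b_1,b_2,b_3$ are distinct, and $(a_k,b_k)$ are not all proportional. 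For any reference index $\mu\in\{1,2,3\}$, with $\{\nu,\pi\}=\{1,2,3\}\setminus\{\mu\}$, subtracting column $\mu$ of $D$ from the other two, then dividing by $(b_\nu^{-\rho}-b_\mu^{-\rho})(b_\pi^{-\rho}-b_\mu^{-\rho})$ (nonzero for $\rho\neq0$) and by $(a_\mu/b_\mu)^{-\rho}$, gives (up to an irrelevant sign)
\[
  D(\rho)=0 \iff \psi_{a_\nu/a_\mu,\,b_\nu/b_\mu}(\rho)=\psi_{a_\pi/a_\mu,\,b_\pi/b_\mu}(\rho),
  \qquad \psi_{u,v}(\rho):=\frac{u^{-\rho}-1}{v^{-\rho}-1}.
\]

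Next I would record the elementary properties of $\psi_{u,v}$ for $u,v>0$, $u\neq1\neq v$. It extends continuously across $\rho=0$ (value $\ln u/\ln v$); for $\rho\neq0$ the numerator and denominator have the signs of $-\rho\ln u$ and $-\rho\ln v$, so $\psi_{u,v}$ is nowhere zero and of constant sign $\sgn(\ln u\cdot\ln v)$ on $\mathbb R$. Hence $\ln|\psi_{u,v}|$ is smooth on $\mathbb R$ and
\[
  \frac{d}{d\rho}\ln|\psi_{u,v}| = \omega(u,\rho)-\omega(v,\rho),
  \qquad \omega(u,\rho):=\frac{\ln u}{u^{\rho}-1}=\frac1\rho\,k(\rho\ln u),\quad k(t):=\frac{t}{e^{t}-1},
\]
the apparent pole of $\omega$ at $\rho=0$ cancelling in the difference. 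The one real input is that $k$ is strictly decreasing on $\mathbb R$, which follows from $k'(t)=\bigl(e^{t}(1-t)-1\bigr)/(e^{t}-1)^{2}$ and $e^{t}(1-t)-1\le0$ (equivalent to $e^{-t}\ge1-t$); consequently $u\mapsto\omega(u,\rho)$ is strictly decreasing on $(0,\infty)$ for every fixed $\rho\neq0$.

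The heart of the matter is the choice of $\mu$. Let $\mu$ be an index for which $\ln(a_\mu/b_\mu)$ is the median of $\ln(a_1/b_1),\ln(a_2/b_2),\ln(a_3/b_3)$ (if these are all equal the $(a_k,b_k)$ are proportional, the excluded case). With $P:=\psi_{a_\nu/a_\mu,\,b_\nu/b_\mu}$, $Q:=\psi_{a_\pi/a_\mu,\,b_\pi/b_\mu}$, the previous paragraph gives
\[
  \frac{d}{d\rho}\ln\bigl|P/Q\bigr| = \frac1\rho\Bigl[\,k\bigl(\rho\ln\tfrac{a_\nu}{a_\mu}\bigr)+k\bigl(\rho\ln\tfrac{b_\pi}{b_\mu}\bigr)-k\bigl(\rho\ln\tfrac{a_\pi}{a_\mu}\bigr)-k\bigl(\rho\ln\tfrac{b_\nu}{b_\mu}\bigr)\,\Bigr].
\]
The median condition says exactly that $\ln(a_\nu/a_\mu)-\ln(b_\nu/b_\mu)$ and $\ln(a_\pi/a_\mu)-\ln(b_\pi/b_\mu)$ have opposite signs; relabelling $\nu,\pi$ if necessary, $\ln(a_\nu/a_\mu)\ge\ln(b_\nu/b_\mu)$ and $\ln(a_\pi/a_\mu)\le\ln(b_\pi/b_\mu)$, and a short comparison of maxima and minima then shows that the multiset $\{\ln(a_\nu/a_\mu),\ln(b_\pi/b_\mu)\}$ majorizes $\{\ln(b_\nu/b_\mu),\ln(a_\pi/a_\mu)\}$ — the two being distinct, since coincidence would force either a repeated $a_k$ or proportional $(a_k,b_k)$. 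Since $k$ is strictly decreasing, the bracket above is then $<0$ for $\rho>0$ and $>0$ for $\rho<0$, so $\frac{d}{d\rho}\ln|P/Q|<0$ throughout $\mathbb R\setminus\{0\}$; by continuity, $\ln|P/Q|$ is strictly decreasing on $\mathbb R$. Hence $|P|=|Q|$, and a fortiori $P=Q$, holds at most once, so $D$ vanishes at most once on $\mathbb R\setminus\{0\}\supseteq[-1,0)\cup(0,+\infty)$.

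I expect the comparability step — that the right choice of $\mu$ makes $\{\ln(a_\nu/a_\mu),\ln(b_\pi/b_\mu)\}$ and $\{\ln(b_\nu/b_\mu),\ln(a_\pi/a_\mu)\}$ majorization-comparable — to be the crux; this is precisely where the median condition is used, and with a careless choice of reference index the two multisets can be strictly nested, $P/Q$ need not be monotone, and this particular bookkeeping fails (although $D$ itself still has at most one zero, as explicit examples show). The remaining work — the case analysis of degenerate configurations in the first paragraph and the monotonicity of $k$ — is routine.
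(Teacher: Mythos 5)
The paper does not actually prove Proposition \ref{ele.3lines}: it is quoted from \cite{Molchanov2013b} and no argument appears in the appendices, so there is no in-text proof to compare yours against. Taken on its own terms, your argument is correct and self-contained. The reduction of \eqref{ele.detsys} to $\psi_{a_\nu/a_\mu,\,b_\nu/b_\mu}(\rho)=\psi_{a_\pi/a_\mu,\,b_\pi/b_\mu}(\rho)$ checks out (expanding after the column subtraction gives $(a_\nu^{-\rho}-a_\mu^{-\rho})(b_\pi^{-\rho}-b_\mu^{-\rho})=(a_\pi^{-\rho}-a_\mu^{-\rho})(b_\nu^{-\rho}-b_\mu^{-\rho})$, and the common factor $(a_\mu/b_\mu)^{-\rho}$ cancels), the formula $\tfrac{d}{d\rho}\ln|\psi_{u,v}|=\tfrac1\rho\bigl(k(\rho\ln u)-k(\rho\ln v)\bigr)$ with $k(t)=t/(e^t-1)$ is right, and the strict monotonicity of $k$ is the correct elementary input. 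The median choice of the reference index is indeed the crux: it yields $\ln(a_\nu/a_\mu)\geq\ln(b_\nu/b_\mu)$ and $\ln(b_\pi/b_\mu)\geq\ln(a_\pi/a_\mu)$ with at least one inequality strict unless all $(a_k,b_k)$ are proportional, and pairing these two inequalities term by term with the decreasing $k$ gives the sign of the bracket directly, hence $\tfrac{d}{d\rho}\ln|P/Q|<0$ on $\mathbb R\setminus\{0\}$ and at most one zero of $D$ there.

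Two minor remarks. First, ``majorizes'' is not the right word: what you use (and what is true) is componentwise domination of the sorted pairs, or more simply the direct pairing above; majorization proper would only control sums of convex functions, not of the monotone $k$. Second, your disposal of degenerate data is the honest reading but worth flagging: if, say, $a_1=a_2=a_3$ or all $(a_k,b_k)$ are proportional, the determinant vanishes identically and the statement as printed fails, so the proposition implicitly carries a genericity hypothesis on the price data that neither \eqref{ide.tsas} nor the statement makes explicit; your ``the data is excluded'' is the only tenable interpretation. Neither point affects the validity of the argument.
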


According to Proposition \ref{ele.3lines}, the values of $\rho \in [-1,0) \cup (0,+\infty)$ for which there is a triple intersection point for the lines of family \eqref{ele.lines}, divide the set $\rho \in [-1,0) \cup (0,+\infty)$ into at most ${T \choose 3} + 2$ subintervals, where ${ T \choose 3} = \tfrac{T!}{3!(T-3)!}$ is the binomial coefficient. Variations of $\rho$ within each of these subintervals do not affect solvability of the moment problem \eqref{ele.mompr}. Thus, combining Propositions \ref{ide.propsol} and \ref{ele.3lines}, we have the following corollary. 

\begin{corollary} Finding of $\rho \in [-1,0) \cup (0,+\infty)$ for which the moment problem \eqref{ele.mompr} is solvable requires at most ${T \choose 3}$ operations to solve equations \eqref{ele.detsys} and ${T \choose 3} + 2$ operations to check whether vector $y = ( y(1),\dots, y(T))$ belongs to polyhedral cone $\Gamma_\rho(\widehat p) = \Gamma_\rho \{ \widehat p(t) \mid t = 1,\dots, T \}$. 
\end{corollary}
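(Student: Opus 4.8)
The plan is to deduce the corollary by combining the solvability criterion of Proposition~\ref{ide.propsol} with the finiteness statement of Proposition~\ref{ele.3lines}; the only point requiring a genuine argument is that the cone $\Gamma_\rho(\widehat p)$ is locally constant in $\rho$ away from the triple-intersection values, and once that is in place the operation count is a matter of bookkeeping.

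First I would reduce the geometry to an arrangement of straight lines. Under the substitution $\xi_i = x_i^{-\rho}$, which for each fixed $\rho$ is a homeomorphism of $\mathop{\mathrm{int}}\mathbb R^2_+$ onto itself, the curves \eqref{ele.part} become the lines $\widehat p_1(t)^{-\rho}\xi_1 + \widehat p_2(t)^{-\rho}\xi_2 = 1$, $t = 1,\dots,T$, in the positive quadrant, and the domains of the partition $\Lambda_h(\widehat p)$ together with their spectra \eqref{ide.specdef} are carried to the cells and sign patterns of this line arrangement (with the inside/outside convention fixed by $\sgn\rho$, hence constant on each of $[-1,0)$ and $(0,+\infty)$). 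As $\rho$ varies continuously, the slope of the $t$-th line is $-(\widehat p_1(t)/\widehat p_2(t))^{-\rho}$ and its positive intercepts move continuously, so the combinatorial type of the arrangement — and therefore the set of spectra $\{Z(G): G\in\Lambda_h(\widehat p)\}$ together with the cone $\Gamma_\rho(\widehat p)$ they span in \eqref{ele.Grhodef} — can only change when two of the lines become parallel or coincide, or when one line passes through the intersection point of two others. Parallelism or coincidence of lines $t,t'$ amounts to a condition on $\widehat p(t),\widehat p(t')$ that does not involve $\rho$; hence the only $\rho$-dependent combinatorial events are the triple intersections, i.e. the solutions of the determinantal equations \eqref{ele.detsys}.

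Next I would count. By Proposition~\ref{ele.3lines}, each of the $\binom{T}{3}$ triples $\{t_1,t_2,t_3\}$ yields at most one such critical value of $\rho$, so there are at most $\binom{T}{3}$ of them; deleting these finitely many points from the two intervals $[-1,0)$ and $(0,+\infty)$ leaves at most $\binom{T}{3}+2$ open subintervals, and on each of them $\Gamma_\rho(\widehat p)$ is constant by the previous paragraph. By Proposition~\ref{ide.propsol}, the moment problem \eqref{ele.mompr1} is solvable for a given $\rho$ if and only if $y = (y(1),\dots,y(T))\in\Gamma_\rho(\widehat p)$; hence solvability is constant on each subinterval. The algorithm is then immediate: solve the $\binom{T}{3}$ equations \eqref{ele.detsys} to obtain the subdivision, and for one representative $\rho$ in each of the at most $\binom{T}{3}+2$ subintervals test whether $y\in\Gamma_\rho(\widehat p)$; the finitely many critical values themselves are disposed of by a separate check of the same type, which does not affect the stated order.

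The main obstacle will be the stability claim of the second paragraph: one must verify not merely that the vertices of the arrangement vary without collision, but that every cell $G_S$ of \eqref{ide.Lamdef} persists with an unchanged spectrum $Z(G)$ of \eqref{ide.specdef} throughout a subinterval free of triple points and of the (fixed) parallelisms and coincidences. I would argue this by a connectedness argument along the subinterval: no cell can appear or disappear without a triple coincidence, and the defining strict inequalities $h(p(t)\circ x)\gtrless p_0(t)$ are stable under the continuous motion of the lines, so the labelling of cells by spectra, the collection $\Lambda_h(\widehat p)$, and the spanned cone $\Gamma_\rho(\widehat p)$ are locally constant in $\rho$; this is precisely the input that upgrades Proposition~\ref{ide.propsol}, which is a criterion for a single $\rho$, into a finite procedure over the whole parameter range.
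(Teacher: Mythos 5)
Your proposal is correct and follows essentially the same route as the paper: reduce to a line arrangement, observe that $\Gamma_\rho(\widehat p)$ can only change at the triple-intersection values of $\rho$ characterized by \eqref{ele.detsys}, invoke Proposition~\ref{ele.3lines} to bound these by $\binom{T}{3}$ (hence at most $\binom{T}{3}+2$ subintervals of $[-1,0)\cup(0,+\infty)$), and apply Proposition~\ref{ide.propsol} once per subinterval. The only difference is that you spell out the local-constancy/stability argument (and the disposal of the critical values themselves) that the paper asserts without detail; this is a welcome elaboration, not a different proof.
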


Thus, the problem of determination of elasticity of substitution of production factors is solvable in polynomial time with respect to the length $T$ of the time series.

\subsection{Simplification of Problem \ref{ele.mompr}}\label{ele.sim}
In order to study Problem \ref{ele.mompr}, it is convenient to make an appropriate change of variables in \eqref{ele.mompr1}.

If $\rho \in [-1,0)$ we make the following change of variables in formula \eqref{ele.mompr1}:
\begin{equation}\label{ele.change<0}
\begin{gathered}
  z_1 = x_1^{-\rho}, \quad z_2 = x_2^{-\rho},\\
  \widetilde p_1(t,\rho)= \widehat p_1(t)^{-\rho}, \quad \widetilde p_2(t,\rho)= \widehat p_2(t)^{-\rho},
\end{gathered}
\end{equation}
where $\widehat p(t)$ is defined in \eqref{ele.phatdef}.

Now suppose that $\rho \in (0,+\infty)$. Consider the following change of variables:
\begin{equation}\label{ele.change>0}
\begin{gathered}
z_1 = \frac{\varepsilon x_1^{-\rho}}{x_1^{-\rho}+x_2^{-\rho}-\varepsilon}, \quad 
z_2 = \frac{\varepsilon x_2^{-\rho}}{x_1^{-\rho}+x_2^{-\rho}-\varepsilon}, \\
\widetilde p_1(t,\rho) = \tfrac 1 \varepsilon - \widehat p_1(t)^{-\rho}, \quad
\widetilde p_2(t,\rho) = \tfrac 1 \varepsilon - \widehat p_2(t)^{-\rho},
\end{gathered}
\end{equation}
for some fixed $\varepsilon > 0$. Note that the map $(x_1,x_2) \to (z_1,z_2)$ maps the domain $\{(x_1,x_2) \in \mathbb R^2_+ \mid x_1^{-\rho}+x_2^{-\rho} > \varepsilon\}$ diffeomorphically to $\mathop{\mathrm{int}}\mathbb R^2_+$. Also note that
\begin{equation}
  1 - \widetilde p_1(t,\rho) z_1 - \widetilde p_2(t,\rho) z_2 = \varepsilon \frac{\widehat p_1(t)^{-\rho} x_1^{-\rho} + \widehat p_2(t)^{-\rho} x_2^{-\rho} - 1}{x_1^{-\rho}+x_2^{-\rho} - \varepsilon}.
\end{equation}
In particular,
\begin{equation}\label{ele.thetaseq}
\begin{gathered}
  \theta\bigl( 1 - ( \widehat p_1(t)^{-\rho} x_1^{-\rho} + \widehat p_2(t)^{-\rho} x_2^{-\rho} \bigr)^{-\frac 1 \rho} \bigr) = \theta\bigl(1 - \widehat p_1(t,p)z_1 - \widehat p_2(t,p) z_2 \bigr), \\
  \text{if $x_1^{-\rho} + x_2^{-\rho} > \varepsilon$}.
\end{gathered}
\end{equation}
Besides, there exists $\varepsilon > 0$ such that
\begin{equation}\label{ele.epsineq}
  \begin{gathered}
  \bigl( \widehat p_1(t)^{-\rho} x_1^{-\rho} + \widehat p_2(t)^{-\rho} x_2^{-\rho}  \bigr)^{-\frac 1 \rho} > 1 \quad \text{if $x_1^{-\rho}+x_2^{-\rho} \leq \varepsilon$},\\
  \widehat p_1(t)^{-\rho} < \tfrac 1 \varepsilon, \quad \widehat p_2(t) < \tfrac 1 \varepsilon \quad (t=1,\dots,T).
  \end{gathered}
\end{equation}
Thus, in view of \eqref{ele.mompr1}, \eqref{ele.epsineq}, without loss of generality, one can suppose that the measure $\mu$ of Problem \ref{ele.mompr} is supported in $\{ (x_1,x_2) \in \mathbb R^2_+ \mid x_1^{-\rho}+x_2^{-\rho} > \varepsilon\}$.

Making the change of variable \eqref{ele.change<0} for $\rho\in[-1,0)$, or the change of variable \eqref{ele.change>0} for $\rho \in (0,+\infty)$, one can reformulate Problem \ref{ele.mompr} as follows.

\begin{problem}\label{ele.momprlines} Given the time series $\{ y(t), p_0(t), p(t) \mid t = 1,\dots,T\}$ of outputs and prices satifsying \eqref{ide.tsas}, find the values of parameter $\rho \in [-1,0) \cup (0,+\infty)$ for which the moment problem
\begin{equation}\label{ele.momprlines1}
  \int_{\mathbb R^2_+} \theta \bigl(1 - \widetilde p_1(t,\rho)z_1 - \widetilde p_2(t,\rho)z_2\bigr) \widetilde\mu(dz) = y(t) \quad (t = 1,\dots,T),
\end{equation}
is solvable in the class of non-negative absolutely continuous measures $\widetilde\mu(dz)$, where $\widetilde p(t,\rho)$ is given by \eqref{ele.phatdef}, \eqref{ele.change<0} for $\rho \in [-1,0)$ and by \eqref{ele.phatdef}, \eqref{ele.change>0} for $\rho \in (0,\infty)$, where $\varepsilon$ is choosen in such a way that \eqref{ele.epsineq} holds.
\end{problem}

Consider the partition of $\mathbb R^2_+$ by the straight lines
\begin{equation}\label{ele.lines}
 \widetilde p_1(t,\rho)z_1 + \widetilde p_2(t,\rho)z_2 = 1 \quad (t = 1,\dots,T).
\end{equation}

Denote by $\Lambda\{ \widetilde p(t,\rho) \mid t = 1,\dots,T \}$ the set of domains of this partition, and by $Z(G)$ the spectral vector for a domain $G$ of this partition:
\begin{equation}\label{ele.spectrum}
   \begin{gathered}
  Z(G) = \bigl(Z_1(G),\dots,Z_T(G)\bigr), \quad G \in \Lambda\{ \widetilde p(t,\rho) \mid t = 1,\dots,T \},\\
  Z_t(G) = \begin{cases}
			  1, & \text{if $\widetilde p_1(t,\rho)z_1 + \widetilde p_2(t,\rho)z_2 < 1$ for any $(z_1,z_2) \in G$}, \\
			  0, & \text{otherwise}.
		  \end{cases}
 \end{gathered}
\end{equation}
Now
\begin{equation}\label{ele.Gdef}
\begin{gathered}
  \text{let $\Gamma\{\widetilde p(t,\rho) \mid t = 1,\dots,T \}$ be the convex cone in $\mathbb R^T$}\\
  \text{spanned by $\{ Z(G) \mid G \in \Lambda\{ \widetilde p(t,\rho) \mid t = 1,\dots,T \} \}$},
  \end{gathered}
 \end{equation}
and let $\Gamma_\rho(\widehat p) = \Gamma_\rho\{ \widehat p(t) \mid t = 1,\dots,T \}$ be the cone defined according to \eqref{ide.Gdef}, \eqref{ele.Grhodef}. It follows from \eqref{ide.specdef}, \eqref{ele.thetaseq}, \eqref{ele.epsineq}, \eqref{ele.spectrum} that the sets of spectral vectors (spectra for short) for partition given by \eqref{ele.lines} and for partition given by \eqref{ide.hypers} coincide. As a corollary, their convex conical hulls also coincide:
\begin{equation}\label{ele.GeqG0}
  \Gamma\{\widetilde p(t,\rho) \mid t = 1,\dots,T \} = \Gamma_\rho( \widehat p ).
\end{equation}
Note that the right-hand side of \eqref{ele.GeqG0} does not depend on $\varepsilon$ of \eqref{ele.change>0} if $\rho>0$. It follows that the cone $\Gamma\{\widetilde p(t,\rho) \mid t = 1,\dots,T \}$ does not depend on $\varepsilon$ as well. 

We recall that a partition of $\mathbb R^2_+$ by a family of curves is called \textit{stretchable} if its spectra coincides with the spectra of some partition of $\mathbb R^2_+$ by straight lines.  Thus partitions of $\mathbb R^2_+$ by the level lines of CES functions are stretchable. However, there are partitions which are not stretchable. We recall that a strict wiring diagram is a set of continuous curves, where each two curves intersect at most once and all intersections are transversal. It is known that all partitions of $\mathbb R^2_+$ by strict wiring diagrams are stretchable for $T \leq 6$, but there exist examples of such partitions which are not stretchable for $T\geq9$. The problem of determining whether a given partition by strict wiring diagram is stretchable is NP-hard. See \cite{Goodman1980,Shor1991} for more details.

\subsection{Formal word associated to partition and its transformations}\label{ele.fwo}
Consider the rays
\begin{equation}\label{ele.ray}
  R_\alpha = \bigl\{ (z_1,z_2) \in \mathbb R^2_+ \mid z_2 = z_1 \tan \alpha \bigr\}, \quad \alpha \in (0,\tfrac \pi 2).
\end{equation}
In addition to \eqref{ide.tsas}, we make the following assumptions:
\begin{gather}
  \text{any three lines of \eqref{ele.lines} do not have a common point}; \label{ele.3pleint}\\
   \begin{gathered}
  \text{each $R_\alpha$ meets at most one intersection of lines of \eqref{ele.lines}},\\
    \begin{gathered}\label{ele.alphas}
	\{\alpha_1,\dots,\alpha_N\} = \{\alpha \in (0,\tfrac \pi 2) \mid \text{$R_\alpha$ meets an intersection of lines of \eqref{ele.lines}}\},\\
	\alpha_0 = 0 < \alpha_1 < \cdots < \alpha_N < \alpha_{N+1} = \tfrac \pi 2.
  \end{gathered}
  \end{gathered}
\end{gather}
Fix $\alpha \in (0,\tfrac \pi 2) \setminus \{\alpha_1,\dots,\alpha_N\}$. As we change $z_1$ from $+\infty$ to $0$, the point $(z_1,z_2(z_1)) \in R_\alpha$ meets the lines of \eqref{ele.lines} in a certain order. We denote this order by $\pi(\alpha) = (\pi_1(\alpha),\dots,\pi_T(\alpha)) \in S_T$:
\begin{equation}\label{ele.pipermut}
  \begin{gathered}
    \text{$R_\alpha$ intersects the $t$-th line of \eqref{ele.lines} at $\bigl(z_1(t),z_2(t)\bigr)$},\\
    z_1( \pi_T(\alpha) ) < \dots < z_1 ( \pi_1(\alpha) ).
  \end{gathered}
\end{equation}
This order does not change if $\alpha$ varies in some subinterval $(\alpha_i,\alpha_{i+1})$, but it changes if $\alpha$ crosses some $\alpha_i$. More precisely, 
\begin{equation}\label{ele.sigma}
  \begin{array}{rcll}
  \pi(\beta) & = & \pi(\alpha), & \alpha, \beta \in (\alpha_i,\alpha_{i+1}), \; 0 \leq i \leq N,\\
  \pi(\beta) & = & \sigma_{t_i} \cdot \pi(\alpha), & \alpha \in (\alpha_{i-1},\alpha_i), \; \beta \in (\alpha_i,\alpha_{i+1}), \; 1\leq i\leq N, \\
  & & & \text{for some transposition $\sigma_{t_i} = (t_i,t_i+1) \in S_T$}.
 \end{array}
\end{equation}
We consider the following formal word:
\begin{equation}\label{ele.word}
  \begin{gathered}
	\omega = \sigma_{t_1} \ldots \sigma_{t_N}, \quad N \geq 1,\\
	\text{$\omega$ is the empty word if $N = 0$}.
  \end{gathered}
\end{equation}
The word \eqref{ele.word} determines uniquely the set of spectral vectors \eqref{ele.spectrum} of the partition of $\mathbb R^2_+$ by the lines \eqref{ele.lines} and, as a corrollary, its convex conic hull $\eqref{ele.Gdef}$ for fixed $\rho \in [-1,0)\cup(0,+\infty)$.

Next, one varies the parameter $\rho$ of \eqref{ele.lines}. These variations change the partition of $\mathbb R^2_+$ by the lines \eqref{ele.lines} and, as a corollary, can transform the word \eqref{ele.word} associated to this partition. However, only two types of such transformations are possible:
\begin{subequations}
  \begin{align}
	& \sigma_{t_1} \sigma_{t_2} \leftrightarrow \sigma_{t_2} \sigma_{t_1}, \qquad\qquad\; \text{if $|t_1-t_2|\geq 2$}, \label{ele.transa} \\
	& \sigma_t \sigma_{t+1} \sigma_t \leftrightarrow \sigma_{t+1} \sigma_t \sigma_{t+1}, \quad (t=1,\dots,T-1).\label{ele.transb}
  \end{align}
\end{subequations}
Transformation \eqref{ele.transa} corresponds to a situation where the order, in which $R_\alpha$ meets the intersection points with the pairs of lines of \eqref{ele.lambda} as $\alpha$ changes from $0$ to $\tfrac \pi 2$, changes. Transformation \eqref{ele.transb} corresponds to situation where a line of \eqref{ele.lines} passes through the intersection point of two other lines of family \eqref{ele.lines}. 

Note that transformations of the form \eqref{ele.transa}, \eqref{ele.transb} appear in the braid theory, and are sometimes called the 2-braid and 3-braid moves, respectively. Besides, recall that the symmetric group  $S_T$ is generated by elements $\sigma_t$, \dots, $\sigma_{T-1}$, satisfying the Moore-Coxeter relations:
\begin{subequations}
\begin{align}
	& \sigma_t^2=1, \qquad\qquad\qquad\quad\; (t=1,\dots,T), \label{ele.MCa} \\
	& \sigma_{t_1}\sigma_{t_2}=\sigma_{t_2}\sigma_{t_1}, \qquad\qquad \text{if $|t_2-t_1|\geq 2$}, \label{ele.MCb}\\
	& \sigma_t\sigma_{t+1}\sigma_t=\sigma_{t+1}\sigma_t\sigma_{t+1}.\label{ele.MCc}
\end{align}
\end{subequations}
We show below that transformation \eqref{ele.transa} does not affect solvability of Problem \ref{ele.mompr} and thus one can factorise the set of formal words \eqref{ele.word} by relation \eqref{ele.MCb}. Hovewer, transformation \eqref{ele.transb} can affect solvability of Problem \ref{ele.mompr}. It turns out that it is convenient to study this effect using deformations of rhombic tilings.

\subsection{Rhombic tiling associated to partition and solvability of Problem \ref{ele.mompr}}\label{ele.til}

The cone $\Gamma_\rho( \widehat p )$ of \eqref{ele.Grhodef} does not change under variations of parameter $\rho$ corresponding to the word transformations \eqref{ele.transa}. However, it changes under variations of $\rho$ which correspond to the word transformations \eqref{ele.transb}. It is convenient to study the corresponding transformations of $\Gamma_\rho(\widehat p)$ using deformations of \textit{rhombic tilings}.

The \textit{rhombic tiling} corresponding to parameter $\rho \in [-1,0)\cup(0,+\infty)$ or, more precisely, to partition $\Lambda\{ \widetilde p(t,\rho) \mid t = 1,\dots, T\}$ of $\mathbb R^2_+$ by the lines \eqref{ele.lines}, is defined as follows. 

Fix $T$ different non-zero vectors $\xi_1$, \dots $\xi_T \in \mathbb R^2$. For definiteness, put
\begin{equation}\label{ele.xi_j}
  \xi_j = \bigl( j - \lfloor \tfrac T 2 \rfloor, 1 \bigr), \quad j = 1,\dots,T.
\end{equation}
To each domain $G$ of partition $\Lambda\{ \widetilde p(t,\rho) \mid t = 1,\dots, T\}$ we associate the vertex $\eta(G) \in \mathbb R^2$ of the rhombic tiling such that
\begin{equation}
  \eta(G) = Z_1(G) \xi_1 + \cdots + Z_T(G) \xi_T,
\end{equation}
where $Z(G) = (Z_1(G), \dots, Z_T(G))$ is the spectrum of $G$ defined in \eqref{ele.spectrum}.

Next, fix $\alpha \in (0,\tfrac \pi 2) \setminus \{\alpha_1,\dots,\alpha_N\}$ and let $R_\alpha$ be the ray of \eqref{ele.ray}. Let $G_{i_1}$, \dots, $G_{i_{T+1}}$ be the domains of partition $\Lambda\{ \widetilde p(t,\rho) \mid t = 1,\dots, T\}$ consecutively traversed by the point $\bigl(z_1,z_2(z_1)\bigr) \in R_\alpha$ as $z_1$ goes from $+\infty$ to $0$, and denote $V_1(\alpha) = \eta(G_{i_1})$, \dots, $V_{T+1}(\alpha) = \eta(G_{i_{T+1}})$. Note that
\begin{equation}\label{ele.etaGik}
  \begin{gathered}
  V_k(\alpha) = \xi_{\pi_k(\alpha)}+\xi_{\pi_{k+1}(\alpha)} + \cdots + \xi_{\pi_T(\alpha)} \quad (k=1,\dots,T), \quad 
  V_{T+1}(\beta) = 0, 
  \end{gathered}
\end{equation}
where $\pi(\alpha)$ is the permutation of \eqref{ele.pipermut}. The polygonal chain
\begin{equation}\label{ele.snake}
  Sn(\alpha) = \overline{V_1(\alpha)\cdots V_{T+1}(\alpha)}
\end{equation}
is called a \textit{snake} $Sn(\alpha)$ of the rhombic tiling.

Note that $Sn(\alpha) = Sn(\beta)$ all $\alpha$ , $\beta \in (\alpha_i,\alpha_{i+1})$ for fixed $i \in \{0,\dots,N\}$.

If $\alpha \in (\alpha_{i-1},\alpha_i)$, $\beta \in (\alpha_i,\alpha_{i+1})$, $i \in \{1,\dots,N\}$, then $Sn(\alpha)$ and $Sn(\beta)$ are different. More precisely:
\begin{equation}
  \begin{aligned}
	V_k(\beta) & = V_k(\alpha), \quad k \in \{1,\dots,T+1\}\setminus \{t_i+1\},\\
	V_{t_i+1}(\beta) & = V_{t_i+1}(\alpha) + \xi_{\pi_{t_i}(\alpha)} - \xi_{\pi_{t_i+1}(\alpha)}.
  \end{aligned}
\end{equation}

The rhombus with vertices $V_{t_i}(\alpha)$, $V_{t_i+1}(\alpha)$, $V_{t_i+2}(\alpha)$, $V_{t_i+1}(\beta)$ corresponds to elementary transposition $\sigma_{t_i}$.

The collection of all snakes $Sn(\alpha)$, $\alpha \in (0,\tfrac \pi 2) \setminus \{\alpha_1,\dots,\alpha_N\}$, and rhombuses corresponding to elementary transpositions is called a \textit{rhombic tiling}. 

Variations of $\rho$ corresponding to transformation \eqref{ele.transa} of word \eqref{ele.word} do not change the rombic tiling. Variations of $\rho$ corresponding to transformation \eqref{ele.transb} of word \eqref{ele.word} correspond to a so-called \text{flip} of the rhombic tiling. Roughly speaking, a flip of the rhombic tiling is the operation that changes the vertex $\xi_j  + \Xi_{-ijk}$ to the vertex $\xi_i + \xi_k + \Xi_{-ijk}$ (or vice versa), and makes the corresponding change of vertex in each snake containing this vertex. It is also required that the six common neighbors of these vertices given by
\begin{equation}
  \begin{gathered}
  \Xi_{-ijk}, \quad \xi_i + \Xi_{-ijk}, \quad \xi_k + \Xi_{-ijk}, \quad \xi_j+\xi_k + \Xi_{-ijk},\\ \xi_i+\xi_j+\Xi_{-ijk}, \quad \xi_i+\xi_j+\xi_k + \Xi_{-ijk}
  \end{gathered}
\end{equation}
belong to each of these rhombic tilings. Here $\xi_i$, $\xi_j$, $\xi_k$ are mutually different and $\Xi_{-ijk}$ is the sum of some of $\xi_s$, $s \not\in \{i,j,k\}$.

In what follows, making an appropriate renumbering, we assume that
\begin{equation}\label{ele.Sigmadef}
  \begin{gathered}
  \widetilde p_1(1,\rho) < \widetilde p_1(2,\rho) < \dots < \widetilde p_1(T,\rho),\\
  \widetilde p_2(\Sigma_\rho(1),\rho) < \widetilde p_2(\Sigma_\rho(2),\rho) < \dots < \widetilde p_2(\Sigma_\rho(T),\rho), \\
  \Sigma_\rho = (\Sigma_\rho(1),\dots,\Sigma_\rho(T)) = \Sigma \{ \widetilde p(t,\rho) \mid t = 1,\dots,T\} \in S_T,
  \end{gathered}
\end{equation}
for some fixed $\rho \in [-1,0) \cup (0,+\infty)$. Note that inequalities \eqref{ele.Sigmadef} remain valid if one varies $\rho$ in either of subintervals $[-1,0)$ or $(0,+\infty)$.

\begin{proposition}\label{ele.LZ} Let $\widetilde p_1(t,\rho)$, $\widetilde p_2(t,\rho)$, $\Sigma_\rho$ be defined by \eqref{ele.change<0}, \eqref{ele.Sigmadef} for $\rho<0$ and by \eqref{ele.change>0}, \eqref{ele.Sigmadef} for $\rho > 0$. Let $\omega_1 = \sigma_{i_1}\dots\sigma_{i_k}$ and $\omega_2 = \sigma_{j_1}\dots\sigma_{j_m}$ be the formal words of partitions corresponding to parameters $\rho_1$, $\rho_2 \in [-1,0) \cup (0,+\infty)$, respectively. If $\omega_1$ and $\omega_2$, considered as permutations, are equal to $\Sigma_\rho$, then they can be transformed one into another by a finite number of moves \eqref{ele.transa}, \eqref{ele.transb}.
\end{proposition}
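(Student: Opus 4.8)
The plan is to interpret the statement as a connectivity result about reduced words in the symmetric group and to translate it, via the snakes--and--rhombic--tilings dictionary set up in Subsection~\ref{ele.til}, into the combinatorial setting where the classical Tits theorem applies. First I would observe that the word $\omega$ of \eqref{ele.word} associated to a partition $\Lambda\{\widetilde p(t,\rho) \mid t=1,\dots,T\}$ is a \emph{reduced} word: each pair of lines of \eqref{ele.lines} meets exactly once in $\mathop{\mathrm{int}}\mathbb R^2_+$ (by \eqref{ele.3pleint} and the positivity of all the $\widetilde p_j(t,\rho)$), so as $\alpha$ runs from $0$ to $\tfrac\pi2$ every transposition $\sigma_t$ contributes exactly once, whence $\omega$, read as a product of adjacent transpositions, has length $\binom T2$ minus the number of inversions already present in the starting permutation $\pi(0^+)$; in fact with the normalization \eqref{ele.Sigmadef} the word $\omega$ is a reduced word for $\Sigma_\rho$ itself, since $\pi(0^+)=\mathrm{id}$ and $\pi(\tfrac\pi2^-)=\Sigma_\rho$. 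So $\omega_1$ and $\omega_2$ are two reduced words for the \emph{same} permutation $\Sigma_\rho$.

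Next I would invoke the Tits/Matsumoto theorem: any two reduced words for a given element of $S_T$ are connected by a finite sequence of the commutation relations \eqref{ele.MCb} and the braid relations \eqref{ele.MCc} (the relation \eqref{ele.MCa} never enters, precisely because reducedness is preserved). These are exactly the moves \eqref{ele.transa} and \eqref{ele.transb}. Thus the group-theoretic content of the proposition is immediate once one has established that $\omega_1,\omega_2$ are reduced words for $\Sigma_\rho$. The remaining work is to make sure the moves \eqref{ele.transa}, \eqref{ele.transb} are \emph{realizable} within the family under consideration, i.e.\ that each intermediate reduced word arising in a Tits chain is itself the word of some genuine partition by lines of the form \eqref{ele.lines}; equivalently, that every flip of the associated rhombic tiling is admissible. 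Here I would use the correspondence, recalled in Subsection~\ref{ele.til}, between reduced words for a fixed permutation and rhombic tilings of the corresponding zonogon: reduced words of $\Sigma_\rho$ are in bijection with rhombic tilings, commutation moves correspond to the trivial (non-)flip and braid moves correspond to hexagon flips, and the graph of all such tilings under flips is connected (this is the standard rhombic-tiling reformulation of Tits' theorem). Since each snake $Sn(\alpha)$ is by construction a monotone lattice path across the zonogon and the collection of snakes reconstructs the tiling, the realizability is automatic: every tiling of the zonogon for $\Sigma_\rho$ comes from some admissible choice, and the flips between them are exactly the moves \eqref{ele.transb}.

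The main obstacle I anticipate is not the group theory but the bookkeeping that identifies ``word of a partition for parameter $\rho$'' with ``reduced word for $\Sigma_\rho$'' uniformly across the two regimes $\rho\in[-1,0)$ and $\rho\in(0,+\infty)$, with the two different changes of variables \eqref{ele.change<0} and \eqref{ele.change>0}, and to check that the normalization \eqref{ele.Sigmadef} indeed forces $\pi(0^+)=\mathrm{id}$ and $\pi(\tfrac\pi2^-)=\Sigma_\rho$ in both cases (the ordering of the lines near the $z_1$-axis is governed by the coefficients $\widetilde p_1(t,\rho)$ and near the $z_2$-axis by $\widetilde p_2(t,\rho)$, so the hypothesis that $\omega_i$ equals $\Sigma_\rho$ as a permutation is exactly what pins this down). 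Once that identification is in place, the proof is: (i) $\omega_1,\omega_2$ are reduced words for $\Sigma_\rho$; (ii) apply Tits' theorem to connect them by moves of type \eqref{ele.transa} and \eqref{ele.transb}; (iii) note that reducedness is preserved along the chain so \eqref{ele.MCa} is never used and every intermediate word is again the word of an honest line partition, completing the argument.
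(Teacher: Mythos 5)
Your argument is essentially correct, but it follows a genuinely different route from the paper's. You reduce the statement to the Tits--Matsumoto theorem: under the normalization \eqref{ele.Sigmadef} one has $\pi(0^+)=\id$ and $\pi(\tfrac{\pi}{2}^-)=\Sigma_\rho$, the word \eqref{ele.word} has length equal to the number of pairwise intersection points in $\mathop{\mathrm{int}}\mathbb R^2_+$, which equals $\ell(\Sigma_\rho)$, so $\omega_1,\omega_2$ are reduced words for the same permutation and are connected by the moves \eqref{ele.transa}, \eqref{ele.transb}. The paper instead gives a self-contained inductive construction: it picks the line $t^*$ maximizing $\widetilde p_2(t^*,\rho)$, continuously increases $\widetilde p_2(t^*,\rho)$ (each event of the deformation being a move \eqref{ele.transa} or \eqref{ele.transb}) until the word acquires the canonical prefix $\sigma_{t^*}\sigma_{t^*+1}\cdots\sigma_{T-1}$ determined by $\Sigma_\rho$, then deletes that line and inducts on $T$, normalizing both $\omega_1$ and $\omega_2$ to the same canonical word. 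In effect the paper re-proves geometrically the instance of Tits' theorem that you cite; your route is shorter and places the result in its standard combinatorial context, while the paper's is constructive and exhibits an explicit normal form.

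Two points to repair in your write-up. First, your justification of reducedness is wrong as stated: positivity of the coefficients does \emph{not} imply that each pair of lines of \eqref{ele.lines} meets in $\mathop{\mathrm{int}}\mathbb R^2_+$; two such lines meet in the open quadrant if and only if their orders by $\widetilde p_1$ and by $\widetilde p_2$ disagree, i.e.\ if and only if the pair is an inversion of $\Sigma_\rho$ under \eqref{ele.Sigmadef}. That corrected statement is exactly what you need (it gives $N=\ell(\Sigma_\rho)$, hence $\omega$ is reduced), but your sentence is internally inconsistent (``each pair meets exactly once'' versus ``length $\binom T2$ minus the number of inversions''). Second, the realizability discussion in your last paragraph is superfluous and its strongest claim is doubtful: the moves \eqref{ele.transa}, \eqref{ele.transb} are formal word moves and the proposition does not require intermediate words to be words of genuine partitions of the form \eqref{ele.lines}; indeed, the paper notes after Proposition \ref{ele.snake-} that not every flip can be realized by varying $\rho$, so you should not lean on ``every tiling of the zonogon comes from some admissible choice.'' Dropping that paragraph and fixing the inversion count makes your proof complete.
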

Proposition \ref{ele.LZ} is a modification of a particular case of the results of \cite{Danilov2010}.

Proposition \ref{ele.LZ} is proved in Appendix \ref{ape.ele}.

Next, we give a sufficient and a necessary condition for solvability of Problem \ref{ele.mompr}.

\begin{proposition}\label{ele.snake+}
Let $\rho \in [-1,0) \cup (0,+\infty)$ be fixed. Let $\widetilde p_1(t,\rho)$, $\widetilde p_2(t,\rho)$ be defined by \eqref{ele.change<0}, \eqref{ele.Sigmadef} for $\rho<0$ and by \eqref{ele.change>0}, \eqref{ele.Sigmadef} for $\rho > 0$. Consider the permutation $\lambda=\bigl(\lambda(1),\dots,\lambda(T)\bigr) \in S_T$ such that
\begin{equation}\label{ele.lambda}
  y(\lambda(T)) < \cdots < y(\lambda(1)),
\end{equation}
where $y = (y(1),\dots,y(T))$ is the vector of outputs of \eqref{ele.mompr1}. We define the snake $Sn(\lambda)$ as the following polygonal chain:
\begin{equation}\label{ele.snlambda}
  \begin{gathered}
	\quad Sn(\lambda) = \overline{V_1(\lambda)\cdots V_{T+1}(\lambda)}, \\
	\quad V_k(\lambda) = \xi_{\lambda(k)} + \xi_{\lambda(k+1)} + \cdots + \xi_{\lambda(T)} \quad (k=1,\dots,T), \quad V_{T+1}(\lambda) = 0,
  \end{gathered}
\end{equation}
where $\xi_j$ are the vectors of \eqref{ele.xi_j}. If the rhombic tiling corresponding to partition $\Lambda\{ \widetilde p(t,\rho) \mid t = 1,\dots,T \}$ has the snake $Sn(\lambda)$, then $\rho$ is a solution of Problem \ref{ele.mompr}.
\end{proposition}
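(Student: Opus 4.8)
\medskip
\textbf{Proof proposal.} The plan is to reduce the statement to the solvability criterion of Proposition \ref{ide.propsol}. By \eqref{ele.Gdef}, \eqref{ele.GeqG0} and \eqref{ele.Grhodef} the cone $\Gamma_\rho(\widehat p)$ is the convex conical hull of the spectral vectors $Z(G)$ of the domains $G$ of the partition \eqref{ele.lines}, so it is enough to exhibit a representation $y=\sum_G c_G\,Z(G)$ with all $c_G\geq 0$. Granting this, $y=(y(1),\dots,y(T))\in\Gamma_\rho(\widehat p)$, and Proposition \ref{ide.propsol} — applied with $h$ the CES unit cost function \eqref{ide.CESdef} and with the identification \eqref{ele.Grhodef} — gives solvability of the moment problem \eqref{ele.mompr1}, i.e.\ $\rho$ solves Problem \ref{ele.mompr}.

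First I would unpack the hypothesis. That the rhombic tiling associated to $\Lambda\{\widetilde p(t,\rho)\mid t=1,\dots,T\}$ has the snake $Sn(\lambda)$ means, concretely, that each of the $T+1$ vertices $V_1(\lambda),\dots,V_{T+1}(\lambda)$ of \eqref{ele.snlambda} is a vertex of the tiling and that consecutive vertices are joined by edges of the tiling; equivalently, there is a ray $R_\alpha$ of \eqref{ele.ray} along which the lines of \eqref{ele.lines} are crossed in the order $\lambda$, i.e.\ $\pi(\alpha)=\lambda$ in the notation of \eqref{ele.pipermut}--\eqref{ele.sigma}. The domains of \eqref{ele.lines} consecutively met by this ray realize the vertices of $Sn(\lambda)$ through the map $\eta$, and as one moves along the ray between the domain with spectrum $(1,\dots,1)$ and the domain with spectrum $0$ one crosses the lines indexed by $\lambda(1),\dots,\lambda(T)$ one at a time, each crossing flipping the corresponding coordinate of the spectrum. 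Reading this off from \eqref{ele.snlambda}, \eqref{ele.etaGik}, the spectra of these $T+1$ domains, suitably reindexed, form a maximal flag of $\{0,1\}$-vectors
\[
  0=Z^{(0)}\subsetneq Z^{(1)}\subsetneq\cdots\subsetneq Z^{(T)}=(1,\dots,1),
\]
with $|\supp Z^{(k)}|=k$ and with the one-element increments $\supp Z^{(k)}\setminus\supp Z^{(k-1)}$ enumerated in the order dictated by $\lambda$.

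I would then build the non-negative decomposition. Fixing this enumeration so that $\supp Z^{(k)}=\{\lambda(1),\dots,\lambda(k)\}$, putting $y(\lambda(T+1)):=0$ and setting $c_k:=y(\lambda(k))-y(\lambda(k+1))$ for $k=1,\dots,T$, the monotonicity of $\lambda$ built into \eqref{ele.lambda} together with $y(t)\geq 0$ from \eqref{ide.tsas} gives $c_k\geq 0$, while a coordinate-by-coordinate check shows that at the coordinate indexed by $\lambda(i)$ the sum $\sum_{k=1}^{T}c_k\,Z^{(k)}$ telescopes to $y(\lambda(i))-y(\lambda(T+1))=y(\lambda(i))$. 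Thus $y=\sum_{k=1}^{T}c_k\,Z^{(k)}$ is a non-negative combination of spectral vectors of domains of \eqref{ele.lines}, whence $y\in\Gamma_\rho(\widehat p)$ and the proof concludes as in the first paragraph.

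Everything of substance is in the second paragraph: one must argue carefully that ``$Sn(\lambda)$ belongs to the rhombic tiling'' really does force the \emph{whole} flag $Z^{(0)},\dots,Z^{(T)}$ to consist of spectra of \emph{non-empty} domains of \eqref{ele.lines} (not just its endpoints, which are present for every $\rho$), and that this flag is oriented in agreement with \eqref{ele.lambda}, so that the telescoping coefficients come out non-negative. This amounts to keeping straight the several index conventions in \eqref{ele.pipermut}, \eqref{ele.xi_j}, \eqref{ele.etaGik}, \eqref{ele.snlambda} — in particular which end of the snake carries the all-ones spectrum and in which direction $\lambda$ is read — and it is here that the genericity assumptions \eqref{ele.3pleint}, \eqref{ele.alphas} and the specific geometry of the CES level-line partition enter; once these are settled the remaining computation is routine.
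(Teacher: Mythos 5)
Your proposal is correct and follows essentially the same route as the paper: the paper likewise takes a ray $R_\alpha$ with $\pi(\alpha)=\lambda$, observes that the domains $G_1,\dots,G_T$ it traverses carry the nested flag of spectra, and assigns them the telescoped weights $y(\lambda(k))-y(\lambda(k+1))\geq 0$ --- except that instead of passing through the cone-membership criterion of Proposition \ref{ide.propsol}, it writes the solving measure explicitly as a sum of constant densities on small balls inside those domains. The two presentations are interchangeable in substance.
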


\begin{proposition}\label{ele.snake-}
Let $\rho \in [-1,0) \cup (0,+\infty)$ be fixed. Let $\widetilde p_1(t,\rho)$, $\widetilde p_2(t,\rho)$, $\Sigma_\rho$ be defined by \eqref{ele.change<0}, \eqref{ele.Sigmadef} for $\rho<0$ and by \eqref{ele.change>0}, \eqref{ele.Sigmadef} for $\rho > 0$. Consider the permutation $\lambda = \bigl(\lambda(1),\dots,\lambda(T)\bigr) \in S_T$ defined by \eqref{ele.lambda}, where $y = (y(1),\dots,y(T))$ is the vector of outputs of \eqref{ele.mompr1}. If $Sn(\lambda)$ does not belong to the closed bounded region bounded by $Sn(\id_{S_T})$ and $Sn(\Sigma_\rho)$, then $\rho$ is not a solution of Problem \ref{ele.mompr}. Here $Sn(\lambda)$, $Sn(\id_{S_T})$ and $Sn(\Sigma_\rho)$ are defined according to \eqref{ele.snlambda}
\end{proposition}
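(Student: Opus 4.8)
I would argue by contraposition: assuming that $\rho$ \emph{is} a solution of Problem \ref{ele.mompr}, I show that the snake $Sn(\lambda)$ lies in the closed bounded region $R$ bounded by $Sn(\id_{S_T})$ and $Sn(\Sigma_\rho)$.

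The first step is a monotonicity inequality for the outputs. Fix indices $a<b$. By the renumbering \eqref{ele.Sigmadef} one has $\widetilde p_1(a,\rho)<\widetilde p_1(b,\rho)$ automatically, so if in addition $\widetilde p_2(a,\rho)<\widetilde p_2(b,\rho)$ --- equivalently $\Sigma_\rho^{-1}(a)<\Sigma_\rho^{-1}(b)$, i.e. $(a,b)$ is \emph{not} an inversion of $\Sigma_\rho$ --- then $\{z\in\mathbb R^2_+ : \widetilde p(b,\rho)\cdot z<1\}\subseteq\{z\in\mathbb R^2_+ : \widetilde p(a,\rho)\cdot z<1\}$, so by \eqref{ele.spectrum} the spectra satisfy $Z_b(G)\le Z_a(G)$ for every domain $G$ of the partition \eqref{ele.lines}; in the notation of \eqref{ide.wdef} this reads $w(b)\le w(a)$ coordinatewise. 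Proposition \ref{ide.propnes}, applied with $\Omega_1=\{a\}$ and $\Omega_2=\{b\}$, then forces $y(a)\ge y(b)$, and since the outputs are pairwise distinct (as is implicit in \eqref{ele.lambda}) in fact $y(a)>y(b)$, i.e. $\lambda^{-1}(a)<\lambda^{-1}(b)$ for the permutation $\lambda$ of \eqref{ele.lambda}. Thus every non-inversion of $\Sigma_\rho$ is a non-inversion of $\lambda$, so the inversion set of $\lambda$ is contained in that of $\Sigma_\rho$; equivalently, $\lambda$ lies in the weak Bruhat interval $[\id_{S_T},\Sigma_\rho]$.

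The second step is to pass from $\lambda\in[\id_{S_T},\Sigma_\rho]$ to $Sn(\lambda)\subseteq R$, which is the only substantial point. The union of the two extreme snakes $Sn(\id_{S_T})$ (the snake $Sn(\alpha)$ for $\alpha$ near $0$) and $Sn(\Sigma_\rho)$ (for $\alpha$ near $\tfrac{\pi}{2}$) is a closed polygonal curve bounding $R$, and the rhombic tiling coming from $\rho$ tiles $R$. By the correspondence between rhombic tilings and reduced words, every rhombic tiling of $R$ realizes a saturated chain from $\id_{S_T}$ to $\Sigma_\rho$ in the weak order --- as the ordered list of its snakes --- and every such chain is realized by some rhombic tiling; this is the particular case of the results of \cite{Danilov2010} underlying Proposition \ref{ele.LZ}. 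Since every $\pi\in[\id_{S_T},\Sigma_\rho]$ lies on such a chain, $Sn(\pi)$ occurs as one of the snakes of some rhombic tiling of $R$, hence $Sn(\pi)\subseteq R$. I would make this explicit from \eqref{ele.xi_j} and \eqref{ele.snlambda}: each edge of a snake is $-\xi_{\pi(k)}$, of second coordinate $-1$, so a snake is a staircase descending from height $T$ to height $0$; checking that $Sn(\id_{S_T})$ and $Sn(\Sigma_\rho)$ bound a region, that every intermediate snake $Sn(\alpha)$ stays inside it, and that a permutation with an inversion outside $\mathrm{Inv}(\Sigma_\rho)$ produces a snake leaving $R$, then yields the characterization of $R$ by inversion-set containment.

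Taking $\pi=\lambda$ gives $Sn(\lambda)\subseteq R$, which is the contrapositive of the claim. I expect the main obstacle to be the second step: pinning down the combinatorial description of $R$ --- namely that it corresponds exactly to the weak-order interval $[\id_{S_T},\Sigma_\rho]$ --- and verifying that a snake indexed by a permutation outside this interval must leave $R$. The reduction of the first step, and the bookkeeping with the conventions \eqref{ele.xi_j}, \eqref{ele.Sigmadef}, are routine.
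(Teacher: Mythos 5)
Your proposal is correct, and its logical skeleton coincides with the paper's: both arguments reduce the statement to the single pairwise inequality obtained from Proposition \ref{ide.propnes} applied with $\Omega_1=\{t_1\}$, $\Omega_2=\{t_2\}$ together with the half-plane inclusion \eqref{ele.pincls}, and both then need the purely combinatorial fact that if $y$ respects every pair on which $\widetilde p_1$ and $\widetilde p_2$ agree in order, then $Sn(\lambda)$ lies in the region bounded by $Sn(\id_{S_T})$ and $Sn(\Sigma_\rho)$ (the paper states this as \eqref{ele.existt1t2} and proves exactly its contrapositive). The only divergence is how that combinatorial fact is established. The paper does it with a short explicit construction: it threads a ``wire'' through each point $(y(t),y(t))$ with endpoints $(1/\widetilde p_1(t,\rho),\,0)$ and $(0,\,1/\widetilde p_2(t,\rho))$; the no-bad-pair hypothesis makes this family a strict wiring diagram whose bounded region has the same boundary as that of the rhombic tiling and which contains $Sn(\lambda)$ as its diagonal snake. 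You instead pass through the weak Bruhat order: $\mathrm{Inv}(\lambda)\subseteq\mathrm{Inv}(\Sigma_\rho)$, hence $\lambda$ lies on a saturated chain from $\id_{S_T}$ to $\Sigma_\rho$, hence $Sn(\lambda)$ occurs as a snake of some rhombic tiling with the prescribed boundary and so lies in the region. That route is sound, but the fact you lean on --- that every permutation in the interval $[\id_{S_T},\Sigma_\rho]$ is realized as a snake of such a tiling --- is an Elnitsky-type correspondence between reduced words and tilings that is strictly more than what Proposition \ref{ele.LZ} asserts (that proposition only connects the words of two partitions by braid moves), so it would have to be proved separately; the paper's wire construction sidesteps this entirely and is the more economical argument. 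You correctly identified this second step as the main obstacle, and the rest of your reduction matches the paper's proof.
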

The proofs of Propositions \ref{ele.snake+} and \ref{ele.snake-} are given in Appendix \ref{ape.ele}.

Propositions \ref{ele.snake+}, \ref{ele.snake-} give a sufficient and a necessary condition for solvability of the moment problem \ref{ele.mompr}, respectively. These conditions are formulated in terms of snakes of rhombic tilings. If the snake $\lambda$ of \eqref{ele.lambda} corresponding to the time series of outputs $y = (y(1),\dots,y(T))$ is contained in the region bounded by the rhombic tiling, but does not belong to this tiling, solvability of moment problem is checked using Proposition \ref{ide.propsol}. It follows from Proposition \ref{ele.LZ} that applying a finite number of flips to this rhombic tiling, one can obtain a rhombic tiling that has this snake and share the same boundaries with the initial tiling (see the proof of Proposition \ref{ele.snake-} in Appendix). However, it is not always possible to get such a sequence of flips by varying parameter~$\rho$.

\appendix

\section{Proofs of the results of Section \ref{sec.agr}}\label{ape.agr}

\begin{proof}[Proof of Proposition \ref{agr.propsol}.] One can show that if a group of industries satisfies \eqref{agr.prod} and $l = (l_1,\dots,l_n)>0$, then optimization problem \eqref{agr.prmax}--\eqref{agr.prpos} satisfies Slater's condition (see \cite{Alekseev1987} for definition).

The Lagrange function $L = L(X^0,X^1,\dots,X^m,l^1,\dots,l^m,p_0,q,s)$ for convex programming problem \eqref{agr.prmax}--\eqref{agr.prpos} is given by 
\begin{equation}\label{agr.Ldef}
 \begin{aligned}
	L & = p_0 F_0(X^0) + \sum_{j=1}^m q_j \biggl( F_j(X^j,l^j)- \sum_{i=0}^m X^i_j \biggr) + \sum_{k=1}^n s_k \biggl( l_k - \sum_{j=1}^m l_k^j \biggr) \\
	& = s \cdot l + \bigl( p_0 F_0(X^0) - q \cdot X^0)  + \sum_{j=1}^m \bigl( q_j F_j(X^j,l^j) - q \cdot X^j - s \cdot l^j \bigr).
 \end{aligned}
\end{equation}
By the Kuhn-Tucker theorem (see, e.g. \cite{Alekseev1987}), the set of vectors $\{ \widehat X^0, \widehat X^1,\dots,\widehat X^m,\widehat l^1,\dots,\widehat l^m\}$ satisfying \eqref{agr.prbal}--\eqref{agr.prpos} is a solution of convex programming problem \eqref{agr.prmax}--\eqref{agr.prpos} with Slater condition if and only if:
\begin{enumerate}
 \item[(i)] There exist Lagrange multipliers $p_0 > 0$, $q = (q_1,\dots,q_m) \geq 0$, $s = (s_1,\dots,s_n) \geq 0$, such that the complementary slackness conditions \eqref{agr.qequ}, \eqref{agr.sequ} are satisfied.
 \item[(ii)] The maximum of Lagrange function $L$ on the set \eqref{agr.prpos} is attained at the set of vectors $\{ \widehat X^0, \widehat X^1,\dots,\widehat X^m,\widehat l^1,\dots,\widehat l^m\}$.
\end{enumerate}
It follows from formula \eqref{agr.Ldef} that the above condition (ii) is equivalent to \eqref{agr.argmaxcond}, \eqref{agr.argmaxcond2}. Proposition \ref{agr.propsol} is proved. 
\end{proof}

\begin{proof}[Proof of Proposition \ref{agr.propdual}.]

Using the Fenchel duality theorem (see, e.g., \cite{Aubin1984}) we have that
\begin{equation}\label{agr.PitoF2}
  F^A(l) = \tfrac{1}{p_0} \min\biggl\{ s \cdot l + \sum_{j=1}^m \Pi_j(q,s) \biggm| q_0(q) \geq p_0, \; q \geq 0, \; s \geq 0 \biggr\}, \quad p_0>0.
\end{equation}
Formulas \eqref{agr.Pidef}, \eqref{agr.PitoF2} imply \eqref{agr.PitoF}.

Using the Fenchel-Moreau theorem (see, e.g., \cite{Aubin1984}) and formula \eqref{agr.PitoF}, we obtain \eqref{agr.FtoPi}. Proposition \ref{agr.propdual} is proved. 
\end{proof}

\section{Proofs of the results of Section \ref{sec.ghj}}\label{ape.ghj}

\begin{proof}[Proof of Proposition \ref{ghj.propchar}] \textit{Necessity}. Let $\Pi(p,p_0)$ be defined by \eqref{ghj.phidef}. It follows from \eqref{ghj.fdef} that the definition is correct. Put $P_r(x_1,\dots,x_n) = (x_1^{-r},\dots,x_n^{-r})$. Then
\begin{equation}\label{ghj.dPidt}
  \begin{aligned}
  \tfrac{\partial \Pi}{\partial p_0}(p,p_0) & = \int_{\mathbb R^n_+} \theta\bigl( p_0 - h(p \circ x) \bigr) \varphi(x) \, dx  \\
   & = -r^{-1} \int_{\mathbb R^n_+} \int_0^{+\infty} \theta( p_0 - h(p \circ x)) t^{n-1}e^{-t} f(t P_r(x) ) dt d x_1^{-r} \dots d x_n^{-r} \\ 
   & = -r^{-1} \int_{\mathbb R^n_+}\int_0^{+\infty} \theta\bigl(p_0 - h( p \circ P_{1/r}(y) ) \bigr) t^{n-1} e^{-t} f(ty) \, dt dy \\
   & = -r^{-1} \int_{\mathbb R^n_+}\int_0^{+\infty} \theta\bigl( p_0^{-r} - P_r(p) \cdot y \bigr) t^{n-1} e^{-t} f(ty) \, dt dy \\
   & = -r^{-1} \int_{\mathbb R^n_+} \int_0^{+\infty} \theta(t - p_0^r P_r(p) \cdot w) \tfrac{e^{-t}}{t} f(w) dt dw \\
   & = -r^{-1} \int_{\mathbb R^n_+} f(w) \int_{p_0^r P_r(p) \cdot w}^{+\infty} \tfrac{e^{-t}}{t} dt \, dw.
  \end{aligned}
\end{equation}
Using \eqref{ghj.dPidt}, we obtain
\begin{equation}\label{ghj.d2Pidt}
  \tfrac{\partial^2 \Pi}{\partial p_0^2}(p,p_0) = p_0^{-1} \int_{\mathbb R^n_+} e^{-p_0^r P_r(p) \cdot w} f(w) \, dw.
\end{equation}
Formula \eqref{ghj.d2Pi} follows from \eqref{ghj.d2Pidt}. Formulas \eqref{ghj.Pilim}, \eqref{ghj.Pihomo} follow from \eqref{ghj.phidef}, \eqref{ghj.dPidt}.

\textit{Sufficiency.} Let $\widetilde \Pi(p,p_0)$ be a function satisfying \eqref{ghj.Pilim}--\eqref{ghj.d2Pi} and let $\Pi(p,p_0)$ be the function defined by \eqref{ghj.phidef}.

It follows from formulas \eqref{ghj.Pihomo}, \eqref{ghj.d2Pi} for $\widetilde \Pi$ and $\Pi$ that
\begin{equation}\label{ghj.d2Pi=d2tPi}
  \tfrac{\partial^2 \widetilde \Pi}{\partial p_0^2}(p,p_0) = \tfrac{\partial^2 \Pi}{\partial p_0^2}(p,p_0), \quad (p,p_0) > 0.
\end{equation}
Using formulas \eqref{ghj.Pihomo}, \eqref{ghj.d2Pi} for $\widetilde \Pi$ and $\Pi$ and formula \eqref{ghj.d2Pi=d2tPi}, we obtain that $\widetilde \Pi(p,p_0) = \Pi(p,p_0)$, $(p,p_0)>0$.
\end{proof}

\begin{proof}[Demonstration of Example \ref{ghj.exCD}] It is shown in \cite{Houth1955} that $F_\text{CD}(l_1,l_2)$ is the production function for the resource distribution problem \eqref{hjm.prmax}--\eqref{hjm.pru} with $\mu$ defined in \eqref{hjm.FCDconst}. Using formulas \eqref{hjm.Pidef}, \eqref{hjm.FCDconst}, we compute the profit function $\Pi_\text{CD}(p_1,p_2,p_0)$ corresponding to $F_\text{CD}(l_1,l_2)$:
\begin{equation}\label{ghj.PiCD}
  \Pi_\text{CD}(p_1,p_2,p_0) = A \tfrac{B(\alpha_1,\alpha_2)}{(\alpha_1+\alpha_2)(\alpha_1+\alpha_2+1)} p_0^{\alpha_1+\alpha_2+1} p_1^{-\alpha_1} p_2^{-\alpha_2}.
\end{equation}
Taking the second derivative of \eqref{ghj.PiCD} with respect to $p_0$, we get:
\begin{equation}
 \begin{gathered}
	\tfrac{\partial^2 \Pi_\text{CD}}{\partial p_0^2}(p_1,p_2,1) = A B(\alpha_1,\alpha_2) p_1^{-\alpha_1} p_2^{-\alpha_2}, \\
	\tfrac{\partial^2 \Pi_\text{CD}}{\partial p_0^2}(p_1^{-\frac 1 r},p_2^{-\frac 1 r},1) = AB(\alpha_1,\alpha_2) p_1^{\frac{\alpha_1}{r}} p_2^{\frac{\alpha_2}{r}}.
 \end{gathered}
\end{equation}
One can see that
\begin{equation}
 \begin{gathered}
	\tfrac{\partial^2 \Pi_\text{CD}}{\partial p_0^2}(p_1^{-\frac 1 r},p_2^{-\frac 1 r},1) = \int_{\mathbb R^2_+} e^{-p \cdot x} H_\text{CD}(x_1,x_2) dx_1 dx_2,\\
	H_\text{CD}(x_1,x_2) = \tfrac{AB(\alpha_1,\alpha_2)}{\Gamma(-\frac{\alpha_1}{r})\Gamma(-\frac{\alpha_2}{r})} x_1^{-\frac{\alpha_1}{r}-1} x_2^{-\frac{\alpha_2}{r}-1}.
 \end{gathered}
\end{equation}
Using Proposition \ref{ghj.propchar}, we obtain that
\begin{equation}\label{ghj.PiCDrepr}
  \begin{gathered}
  \Pi_\text{CD}(p_1,p_2,p_0) = \int_{\mathbb R^2_+} \bigl( p_0 - ( p_1^{-r} x_1^{-r} + p_2^{-r} x_2^{-r} )^{-\frac 1 r} \bigr)_+ \, \varphi_\text{CD}(x_1,x_2) \, dx_1 dx_2,\\ 
  \varphi_\text{CD}(x_1,x_2) = (-r)A \tfrac{B(\alpha_1,\alpha_2)}{B(-\frac{\alpha_1}{r},-\frac{\alpha_2}{r})} x_1^{\alpha_1-1} x_2^{\alpha_2-1}.
  \end{gathered}
\end{equation}
Formula \eqref{ghj.PiCDrepr} implies that $\Pi_\text{CD}(p_1,p_2,p_0)$ is the profit function for the resource distribution problem \eqref{ghj.prmax}--\eqref{ghj.pru} with \eqref{ghj.PiCDh}, \eqref{ghj.PiCDmu}. As a corollary, $F_\text{CD}(l_1,l_2)$ is the production function for the same problem.
\end{proof}

\begin{proof}[Demonstration of Example \ref{ghj.exCES}] The profit function $\Pi_\text{CES}(p,p_0)$ corresponding to $F_\text{CES}(l_1,l_2)$ is given by formula \eqref{hjm.PiCES}. We rewrite formula \eqref{hjm.PiCES} as follows:
\begin{gather}\label{ghj.PiCES}
	\Pi_\text{CES}(p_1,p_2,p_0) = \gamma^{\frac{1}{1-\gamma}}(1-\gamma) p_0^{\frac{1}{1-\gamma}} \bigl( \beta_1 p_1^{-\frac r 2} + \beta_2 p_2^{-\frac r 2} \bigr)^{-b}, \\
	\beta_1 = \alpha_1^{\frac{1}{1+\rho}}, \; \beta_2 = \alpha_2^{\frac{1}{1+\rho}}, \; r = -\tfrac{2\rho}{1+\rho}, \; b = \tfrac{\gamma}{1-\gamma} \tfrac{1+\rho}{\rho}. \label{ghj.PiCESconsts}
\end{gather}
Taking the second derivative of \eqref{ghj.PiCES} with respect to $p_0$, we get:
\begin{equation}
	\begin{gathered}
   \tfrac{\partial^2 \Pi_\text{CES}}{\partial p_0^2}(p_1,p_2,1) = \tfrac{\gamma^{\frac{1}{1-\gamma}}}{1-\gamma} \bigl( \beta_1 p_1^{-\frac r 2} + \beta_2 p_2^{-\frac r 2} \bigr)^{-b}, \\
   \tfrac{\partial^2 \Pi_\text{CES}}{\partial p_0^2}(p_1^{-\frac 1 r},p_2^{-\frac 1 r},1) = \tfrac{\gamma^{\frac{1}{1-\gamma}}}{1-\gamma} \bigl( \beta_1 \sqrt{p_1} + \beta_2 \sqrt{p_2} \bigr)^{-b}.
   \end{gathered}
 \end{equation}
Using Lemma \ref{app.lemmaLapsqrt2} formulated below we have that:
\begin{equation}
 \begin{gathered}
	\tfrac{\partial^2 \Pi_\text{CES}}{\partial p_0^2}(p_1^{-\frac 1 r},p_2^{-\frac 1 r},1) = \int_{\mathbb R^2_+} e^{-p \cdot x} H_\text{CES}(x_1,x_2) \, dx_1 dx_2,\\
	H_\text{CES}(x_1,x_2) = \tfrac{\gamma^{\frac{1}{1-\gamma}}}{1-\gamma}  \tfrac{2^{b-1}}{\pi} \beta_1 \beta_2 \tfrac{\Gamma(\frac b 2 +1)}{\Gamma(b)} (x_1 x_2)^{-\frac 3 2} \bigl( \tfrac{\beta_1^2}{x_1} + \tfrac{\beta_2^2}{x_2} \bigr)^{-\frac b 2 - 1}.
 \end{gathered}
\end{equation}
Using Proposition \ref{ghj.propchar}, we obtain that
\begin{equation}\label{ghj.PiCESrepr}
\begin{gathered}
  \Pi_\text{CES}(p_1,p_2,p_0) = \int_{\mathbb R^n_+} \bigl( p_0 - ( p_1^{-r} x_1^{-r} + p_2^{-r} x_2^{-r} )^{-\frac 1 r} \bigr)_+ \varphi_\text{CES}(x_1,x_2) \, dx_1 dx_2, \\
  \varphi_\text{CES}(x_1,x_2) = \tfrac{\gamma^{\frac{1}{1-\gamma}}}{1-\gamma}  \tfrac{2^{b-1}}{\pi} \beta_1 \beta_2 \tfrac{\Gamma(\frac b 2 +1)\Gamma(\frac b 2)}{\Gamma(b)} (x_1 x_2)^{\frac r 2-1} \bigl( \beta_1^2 x_1^r + \beta_2^2 x_2^r \bigr)^{-\frac b 2 - 1}.
  \end{gathered}
\end{equation}
Formula \eqref{ghj.PiCESrepr} and definitions \eqref{ghj.PiCESconsts} imply that $\Pi_\text{CES}(p_1,p_2,p_0)$ is the profit function for the resource distribution problem \eqref{ghj.prmax}--\eqref{ghj.pru} with \eqref{ghj.PiCESh}, \eqref{ghj.PiCESmu}. As a corollary, $F_\text{CES}(l_1,l_2)$ is the production function for the same problem.
\end{proof}

\begin{lemma}\label{app.lemmaLapsqrt} Let $f(s)$ be a function such that $\int_0^{+\infty} e^{-As} |f(s)| \, ds < \infty$ for any $A>0$.
Then
\begin{equation}\label{app.Lapsqrt}
 \begin{gathered}
	\int_0^{+\infty} e^{-s(\sqrt p_1 + \sqrt p_2)} f(s) \, ds = \int_{\mathbb R^2_+} e^{-p \cdot x} H(x) \, dx, \quad p = (p_1,p_2)>0, \\
	H(x_1,x_2) = \tfrac{1}{8\pi} (x_1 x_2)^{-\frac 3 2} G( \tfrac{1}{4x_1} + \tfrac{1}{4x_2} ),\\
	G(s) = \int_0^{+\infty} e^{-st} \sqrt t f(\sqrt t) \, dt.
 \end{gathered}
\end{equation}
\end{lemma}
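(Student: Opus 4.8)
The plan is to derive the identity from the classical subordination formula for the one-sided $\tfrac12$-stable density,
\begin{equation}\label{app.sub}
  \int_0^{+\infty} \frac{a}{2\sqrt{\pi}}\, t^{-3/2}\, e^{-a^2/(4t)}\, e^{-pt}\, dt = e^{-a\sqrt{p}}, \qquad a>0,\ p>0,
\end{equation}
which I would first establish by a direct computation (for instance, differentiate the left-hand side in $a$, or apply the substitution $t \mapsto a^2/(4t)$, reducing in either case to the Gaussian integral). Applying \eqref{app.sub} twice, once in $p_1$ and once in $p_2$ with $a = s$, gives the representation
\begin{equation}
  e^{-s(\sqrt{p_1}+\sqrt{p_2})} = \int_{\mathbb R^2_+} e^{-p\cdot x}\, \frac{s^2}{4\pi}\, (x_1 x_2)^{-3/2}\, e^{-\frac{s^2}{4x_1}-\frac{s^2}{4x_2}}\, dx, \qquad p>0,\ s>0.
\end{equation}

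Next I would multiply by $f(s)$, integrate over $s \in (0,+\infty)$, and interchange the order of integration to obtain
\begin{equation}
  \int_0^{+\infty} e^{-s(\sqrt{p_1}+\sqrt{p_2})} f(s)\, ds = \int_{\mathbb R^2_+} e^{-p\cdot x}\, (x_1x_2)^{-3/2} \Bigl[ \frac{1}{4\pi} \int_0^{+\infty} s^2\, e^{-s^2\left(\frac{1}{4x_1}+\frac{1}{4x_2}\right)} f(s)\, ds \Bigr]\, dx .
\end{equation}
In the inner integral the change of variable $t = s^2$ gives $s^2\, ds = \tfrac12 \sqrt{t}\, dt$, so the bracketed expression becomes $\frac{1}{8\pi}\, G\bigl(\frac{1}{4x_1}+\frac{1}{4x_2}\bigr)$ with $G$ as in \eqref{app.Lapsqrt}, which is precisely $H(x_1,x_2)$. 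This is exactly the asserted identity.

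The main (and essentially the only) obstacle is the justification of the interchange of integrals. I would handle it via Tonelli's theorem: replacing $f$ by $|f|$ and carrying out the $x_1,x_2$ integration first by means of \eqref{app.sub}, the triple integral collapses to $\int_0^{+\infty} e^{-s(\sqrt{p_1}+\sqrt{p_2})} |f(s)|\, ds$, which is finite for every $p>0$ by the hypothesis $\int_0^{+\infty} e^{-As}|f(s)|\, ds < \infty$ applied with $A = \sqrt{p_1}+\sqrt{p_2} > 0$. The same bound shows that all integrals occurring in the computation converge absolutely and, in particular, that $G$ is well defined; hence Fubini's theorem applies and both the interchange and the final substitution are legitimate. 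Everything else is routine.
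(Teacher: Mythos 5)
Your argument is correct and follows essentially the same route as the paper: the paper starts from the two-dimensional identity $\frac{1}{4\pi}\int_{\mathbb R^2_+} e^{-p\cdot x}\,\frac{s_1 s_2}{(x_1x_2)^{3/2}}e^{-\frac{s_1^2}{4x_1}-\frac{s_2^2}{4x_2}}\,dx = e^{-s_1\sqrt{p_1}-s_2\sqrt{p_2}}$ (cited from Bateman's tables) with $s_1=s_2=s$, then multiplies by $f(s)$, integrates, and substitutes $t=s^2$, exactly as you do. The only differences are that you derive the Bateman identity from the one-sided $\tfrac12$-stable subordination formula rather than quoting it, and you spell out the Tonelli/Fubini justification that the paper leaves implicit — both harmless refinements of the same proof.
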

\begin{proof} The following formula is well-known, see, e.g., \cite{Bateman1954}:
\begin{equation*}
  \frac{1}{4\pi} \int_{\mathbb R^2_+} e^{-p\cdot x} \frac{s_1 s_2}{(x_1 x_2)^{\frac 3 2}} e^{-\frac{s_1^2}{4x_1}-\frac{s_2^2}{4x_2}} dx_1 dx_2 = e^{-s_1 \sqrt{p_1} - s_2 \sqrt{p_2}},
\end{equation*}
where $s = (s_1,s_2) > 0$, $p = (p_1,p_2) > 0$. We set $s_1 = s_2$, multiply this equation by $f(s)$ and integrate over $s \in [0,+\infty)$:
\begin{equation*}
  \begin{gathered}
  \frac{1}{4\pi} \int \frac{e^{-p \cdot x}}{(x_1 x_2)^{\frac 3 2}} \int_0^{+\infty} s^2 e^{-s^2\bigl( \frac{1}{4x_1}+\frac{1}{4x_2} \bigr)} f(s) ds \, dx_1 dx_2  \\
  = \int_0^{+\infty} e^{-s(\sqrt{p_1}+\sqrt{p_2})} f(s) \, ds.
  \end{gathered}
\end{equation*}
Making the change of variable $s^2 = t$ in the inner integral on the left, we get \eqref{app.Lapsqrt}. 
\end{proof}

\begin{lemma}\label{app.lemmaLapsqrt2} Let $\beta_1$, $\beta_2$, $b > 0$. The following formula is valid:
\begin{equation}\label{app.Lapsqrt2}
  \begin{gathered}
	( \beta_1 \sqrt{p_1} + \beta_2 \sqrt{p_2})^{-b} = \int_{\mathbb R^2_+} e^{-p\cdot x} \widetilde H(x_1,x_2) dx_1 dx_2, \quad p = (p_1,p_2)>0,\\
	H(x_1,x_2) = \tfrac{2^{b-1}}{\pi} \beta_1 \beta_2 \tfrac{\Gamma(\frac b 2 +1)}{\Gamma(b)} (x_1 x_2)^{-\frac 3 2} \bigl( \tfrac{\beta_1^2}{x_1} + \tfrac{\beta_2^2}{x_2} \bigr)^{-\frac b 2 - 1}, \quad (x_1,x_2) > 0,
  \end{gathered}
\end{equation}
where $\Gamma$ is the gamma function.
\end{lemma}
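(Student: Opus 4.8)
The plan is to deduce Lemma~\ref{app.lemmaLapsqrt2} from the already proved Lemma~\ref{app.lemmaLapsqrt} by combining the elementary Gamma-function representation of a negative power with a linear rescaling of variables. First I would dispose of the constants $\beta_1,\beta_2$: since $\beta_i\sqrt{p_i}=\sqrt{\beta_i^2 p_i}$, setting $q_i=\beta_i^2 p_i$ reduces the claim to the special case
\[
  (\sqrt{q_1}+\sqrt{q_2})^{-b}=\int_{\mathbb R^2_+}e^{-q\cdot x}\,K(x_1,x_2)\,dx_1\,dx_2,\qquad
  K(x_1,x_2)=\frac{2^{b-1}}{\pi}\,\frac{\Gamma(\frac b2+1)}{\Gamma(b)}\,(x_1x_2)^{-\frac32}\Bigl(\tfrac{1}{x_1}+\tfrac{1}{x_2}\Bigr)^{-\frac b2-1}.
\]
Once this is established, the change of variables $x_i\mapsto x_i/\beta_i^2$, which is a diffeomorphism of $\mathbb R^2_+$ onto itself, transports the identity to the stated one for $p>0$: the Jacobian contributes $(\beta_1^2\beta_2^2)^{-1}$ while the factor $(x_1x_2)^{-3/2}$ produces $(\beta_1^2\beta_2^2)^{3/2}$, so the net prefactor is $\beta_1\beta_2$, exactly as in \eqref{app.Lapsqrt2}.

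For the reduced identity I would apply Lemma~\ref{app.lemmaLapsqrt} with $f(s)=s^{b-1}/\Gamma(b)$. This $f$ satisfies the hypothesis of that lemma, since $\int_0^{+\infty}e^{-As}s^{b-1}\,ds=\Gamma(b)A^{-b}<\infty$ for every $A>0$ and $b>0$, and moreover $\int_0^{+\infty}e^{-as}f(s)\,ds=a^{-b}$ with $a=\sqrt{q_1}+\sqrt{q_2}$, which is precisely the left-hand side we want. The auxiliary function of Lemma~\ref{app.lemmaLapsqrt} is then
\[
  G(s)=\int_0^{+\infty}e^{-st}\sqrt t\,f(\sqrt t)\,dt=\frac{1}{\Gamma(b)}\int_0^{+\infty}e^{-st}t^{b/2}\,dt=\frac{\Gamma(\frac b2+1)}{\Gamma(b)}\,s^{-\frac b2-1},
\]
so Lemma~\ref{app.lemmaLapsqrt} yields the kernel $\frac{1}{8\pi}(x_1x_2)^{-3/2}G\bigl(\tfrac{1}{4x_1}+\tfrac{1}{4x_2}\bigr)$; pulling the factor $4^{b/2+1}=2^{b+2}$ out of the last bracket and using $\tfrac{2^{b+2}}{8\pi}=\tfrac{2^{b-1}}{\pi}$ gives exactly $K(x_1,x_2)$, completing the reduced case and hence, by the rescaling above, the lemma.

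I do not expect any genuine obstacle here: the analytic content is entirely contained in Lemma~\ref{app.lemmaLapsqrt}, and the interchanges of integration used there, as well as the elementary evaluation of $G$, are justified by absolute convergence for $p>0$. The only thing that needs care is the bookkeeping of multiplicative constants and powers of two, and the one point worth stating explicitly is that $x_i\mapsto x_i/\beta_i^2$ is a bijection of $\mathbb R^2_+$, so that the reduced identity for all $q>0$ is genuinely equivalent to \eqref{app.Lapsqrt2} for all $p>0$.
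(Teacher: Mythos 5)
Your proposal is correct and follows essentially the same route as the paper: both reduce to the case $\beta_1=\beta_2=1$ via the substitution $x_i\mapsto x_i/\beta_i^2$ (the paper phrases this as the scaling property of the Laplace transform, applied after the special case rather than before) and then apply Lemma~\ref{app.lemmaLapsqrt} with $f(s)=s^{b-1}/\Gamma(b)$, computing $G(s)=\tfrac{\Gamma(\frac b2+1)}{\Gamma(b)}s^{-\frac b2-1}$ exactly as you do. Your constant bookkeeping, including the factor $4^{b/2+1}/8=2^{b-1}$ and the net prefactor $\beta_1\beta_2$, matches the paper's result.
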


\begin{proof} Consider the case $\beta_1 = \beta_2 = 1$. Put $f(s) = \frac{s^{b-1}}{\Gamma(b)}$. We have
\begin{equation}\label{app.Lf}
  \int_0^{+\infty} e^{-s(\sqrt{p_1}+\sqrt{p_2})} f(s) \, ds = (\sqrt{p_1}+\sqrt{p_2} )^{-b}.
\end{equation}
We define functions $G(s)$ and $H(x_1,x_2)$ according to \eqref{app.Lapsqrt}. Then
\begin{equation}\label{app.GHsqrt}
 \begin{gathered}
  G(s) = \int_0^{+\infty} e^{-st} \frac{t^{\frac b 2}}{\Gamma(b)} dt =  s^{-\frac b 2 - 1} \tfrac{\Gamma(\frac b 2 + 1)}{\Gamma(b)}, \\
  H(x_1,x_2) = \tfrac{2^{b-1}}{\pi} \tfrac{\Gamma(\frac b 2 + 1)}{\Gamma(b)} (x_1 x_2)^{-\frac 3 2} \bigl( \tfrac{1}{x_1} + \tfrac{1}{x_2} \bigr)^{-\frac b 2 - 1}.
 \end{gathered}
\end{equation}
Using \eqref{app.Lapsqrt}, \eqref{app.Lf}, \eqref{app.GHsqrt}, we get \eqref{app.Lapsqrt2} with $\widetilde H = H$ for $\beta_1 = \beta_2 = 1$.

Besides, recall the scaling property of the Laplace transform:
\begin{equation*}
  \int_{\mathbb R^2_+} e^{-\beta_1^2 p_1 x_1 - \beta_2^2 p_2 x_2} H(x) dx = \frac{1}{\beta_1^2 \beta_2^2} \int_{\mathbb R^2_+} e^{-p\cdot x} H(\tfrac{x_1}{\beta_1^2},\tfrac{x_2}{\beta_2^2})\,dx.
\end{equation*}
It follows from this scaling property that for arbitrary $\beta_1$, $\beta_2$ we have \eqref{app.Lapsqrt2} with $\widetilde H(x_1,x_2) = (\beta_1 \beta_2)^{-2} H(\tfrac{x_1}{\beta_1^2},\tfrac{x_2}{\beta_2^2})$.

Lemma \ref{app.lemmaLapsqrt2} is proved.
\end{proof}

\section{Proofs of the results of Section \ref{sec.ide}}\label{ape.ide}

\begin{proof}[Proof of Proposition \ref{ide.propnes}.] It follows from \eqref{ide.wdef} that
\begin{equation}\label{ide.NiGdef}
\begin{gathered}
  \sum_{t \in \Omega_i} w_G(t) = N_i(G), \quad G \in \Lambda_h ( \widehat p ) \quad (i =1,2),\\
  N_i(G) = \bigl| \bigl\{ t \in \Omega_i \mid G \subseteq \{ x \mid p_0(t) > h( p(t) \circ x ) \} \bigr\} \bigr|,
\end{gathered}
\end{equation}
where $|S|$ denotes the number of elements of set $S$.

Suppose that Problem \ref{ide.mompr} is solvable and $\mu$ is a solution. Then using \eqref{ide.mompreq} we get
\begin{equation}\label{ide.ysum}
  \sum_{t \in \Omega_i} y(t) = \sum_{G \in \Lambda_h ( \widehat p )} \mu(G) N_i(G) \quad (i = 1,2),\\
\end{equation}
where $N_i(G)$ is defined in \eqref{ide.NiGdef}.

Formulas \eqref{ide.w1>w2}, \eqref{ide.NiGdef}, \eqref{ide.ysum} and non-negativity of measure $\mu$ imply \eqref{ide.y1>y2}.
\end{proof}

\begin{proof}[Proof of Proposition \ref{ide.propintnorm}.] Consider an arbitrary face of cone $\Gamma_h(\widehat p) = \Gamma_h\{ \widehat p(t) \mid t = 1,\dots, T\}$. This face is the linear span of some linearly independent spectra $Z(G_1)$, \dots, $Z(G_{T-1})$ of $G_1$, \dots, $G_{T-1} \in \Lambda_h (  \widehat p )$. Thus, it can be described by the following equation for $Z = (Z_1,\dots,Z_T)$:
\begin{equation}\label{ide.faceeq}
  \det \begin{pmatrix} Z \\ Z(G_1) \\ \dots \\ Z(G_{T-1}) \end{pmatrix} = 0.
\end{equation}
As far as the coordinates of $Z(G_1)$, \dots, $Z(G_{T-1})$ are integer, the coefficients of equation \eqref{ide.faceeq} are also integer. Besides, this vector of coefficients is a normal vector to the face described by \eqref{ide.faceeq}. 
\end{proof}

\begin{proof}[Proof of Proposition \ref{ide.propsuf}. Sufficiency.] Suppose that $\Gamma_h(\widehat p) = \Gamma_h\{ \widehat p(t) \mid t = 1,\dots,T \}$ is discretely convex. Let $y = ( y(1), \dots, y(T) )$ be a vector satisfying the necessary condition of Proposition \ref{ide.propnes}. In view of Proposition \ref{ide.propsol}, it is sufficient to show that $y \in \Gamma_h (\widehat p )$.

In turn, in order to show that $y \in \Gamma_h ( \widehat p )$, it is sufficient to check that for any inner normal $\nu = (\nu_1,\dots,\nu_T)$ to $\Gamma_h(\widehat p)$, whose coordinates belong to $\{-1,0,1\}$, we have $y \cdot \nu \geq 0$.

Put
\begin{equation}\label{ide.Om12def}
  \Omega_1 = \{ j \mid \nu_j = 1\}, \quad \Omega_2 = \{j \mid \nu_j = -1 \}.
\end{equation}
Using that $z(G) \in \Gamma_h(\widehat p)$, $G \in \Lambda_h(\widehat p)$, we have that
\begin{equation}\label{ide.znu>0}
  \sum_{t \in \Omega_1} w_G(t) - \sum_{t \in \Omega_2} w_G(t) = Z(G) \cdot \nu \geq 0,  \quad G \in \Lambda_h ( \widehat p ).
\end{equation}

By hypothesis, $y$ satisfies the necessary condition of Proposition \ref{ide.propnes}. Thus, \eqref{ide.znu>0} implies
\begin{equation}
  y \cdot \nu = \sum_{t \in \Omega_1} y(t) - \sum_{t \in \Omega_2} y(t) \geq 0.
\end{equation}
Hence, $y \in \Gamma_h ( \widehat p )$.

\textit{Necessity.} For a given pair $(\Omega_1,\Omega_2)$ of subsets $\Omega_1$, $\Omega_2 \subset \{1,\dots,T\}$ such that $\Omega_1 \cap \Omega_ 2 = \varnothing$, $\Omega_1 \cup \Omega_2 \neq \varnothing$, we set
\begin{equation}\label{ide.nudef}
  \begin{gathered}
  \nu_{\Omega_1,\Omega_2} =  \bigl( \nu_{\Omega_1,\Omega_2}(1), \dots, \nu_{\Omega_1,\Omega_2}(T) \bigr),\\
  \nu_{\Omega_1,\Omega_2}(t) = \begin{cases}
								  1, & t \in \Omega_1, \\
								  -1, & t \in \Omega_2, \\
								  0, & t \not\in \Omega_1 \cup \Omega_2.
                               \end{cases}
  \end{gathered}
\end{equation}
Let
\begin{equation}\label{ide.Gnudef}
  \begin{gathered}
	M = \{ (\Omega_1,\Omega_2) \mid \Omega_1 \cap \Omega_2 = \varnothing, \Omega_1 \cup \Omega_2 \neq \varnothing, \; \text{$\eqref{ide.w1>w2}$ holds} \}, \\
	N = \bigl\{ \nu_{\Omega_1,\Omega_2} \mid (\Omega_1,\Omega_2) \in M \bigr\},\\
	\Gamma = \{ x \mid \forall \nu \in N \;\; x \cdot \nu \geq 0 \}.
  \end{gathered}
\end{equation}
Using these notations, Proposition \ref{ide.propnes} can be reformulated as $\Gamma_h(\widehat p) \subseteq \Gamma$. Suppose that the necessary condition of Proposition \ref{ide.propnes} is also sufficient. Then
\begin{equation}\label{ide.GeqGnu}
  \Gamma_h (\widehat p) = \Gamma.
\end{equation}
By construction, the cone $\Gamma_h(\widehat p)$ is spanned by vectors with coordinates in $\{-1,0,1\}$.  It also follows from \eqref{ide.nudef}, \eqref{ide.Gnudef}, \eqref{ide.GeqGnu} that each face of $\Gamma_h ( \widehat p )$ admits a non-zero normal vector with coordinates in $\{-1,0,1\}$. It follows that $\Gamma_h ( \widehat p )$ is discretely convex.
\end{proof}

\section{Proofs of the results of Section \ref{sec.ele}}\label{ape.ele}

\begin{proof}[Proof of Proposition \ref{ele.LZ}.] We prove that using moves \eqref{ele.transa}, \eqref{ele.transb} one can tranform $\omega_1$ and $\omega_2$ to a fixed word depending only on $\Sigma_\rho = \Sigma\{ \tilde p(t,\rho) \mid t = 1,\dots,T \} \in S_T$. One can see that it is true for $T=2$. For arbitrary $T \geq 3$ we prove it using induction.

Let $t^* \in \{1,\dots,T\}$ be such that
\begin{equation}
 \widetilde p_2(t^*,\rho)=\max_{t=1,\dots,T}\left\{\widetilde p_2(t,\rho)\right\}.
\end{equation}	

We continuously increase $\widetilde p_2(t^*,\rho)$ leaving the parameters $\widetilde p_1(t,\rho)$, $t = 1$, \dots, $T$, and $\widetilde p_2(t,\rho)$, $t = 1$, \dots, $t^*-1$, $t^*+1$, \dots, $T$, unchanged. This leads to transformations of the word of \eqref{ele.word} associated to the partition of $\mathbb R^2_+$ by the lines of \eqref{ele.lines}. 

As we increase $\alpha$ from $0$ to $\tfrac \pi 2$, the ray $R_\alpha$ consecutively meets intersection points \eqref{ele.alphas} of the pairs of lines of \eqref{ele.lines}. Variations of $\widetilde p_2(t^*,\rho)$ can change the order in which $R_\alpha$ meets these points. This situation corresponds to application of move \eqref{ele.transa} to the word \eqref{ele.word} associated to partition.  

Besides, if one increases $\widetilde p_2(t^*,\rho)$, the $t^*$-th line of \eqref{ele.lines} can meet the intersection point for a pair of other lines of \eqref{ele.lines}. This situation corresponds to application of move \eqref{ele.transb} to the word \eqref{ele.word} associated to partition.

One can see that the $t^*$-th line of \eqref{ele.lines} intersects the lines with numbers $t = t^*+1$, \dots, $T$ and does not intersect the other lines of \eqref{ele.lines}. Let $\alpha_{tt^*}$, $t = t^*+1$, \dots, $T$, be the values of $\alpha$ for which $R_\alpha$ meets the intersection point of the $t^*$-th line with the $t$-th line. Note that
\begin{equation}
  \alpha_{tt^*} \to 0, \quad \text{as $p_2(t^*,\rho)\to+\infty$}, \quad T = t^*+1,\dots,T.
\end{equation}
Choose $p_2(t^*,\rho)$ in such a way that $\alpha_{tt^*} < \alpha^*$, $t=t^*+1$, \dots, $T$, where $\alpha^*$ be the minimal of the angles for which the ray $R_\alpha$ meets an intersection point of a pair of lines of \eqref{ele.lines} with $t \neq t^*$. 

As $\alpha$ increases from $0$ to $\tfrac \pi 2$, the ray $R_\alpha$ consecutively meets the intersection points of the pairs of lines of \eqref{ele.lines} with numbers $(t^*,t^*+1)$, \dots, $(t^*,T)$ and then the intersection points of the lines of \eqref{ele.lines} different from the $t^*$-th line. As a corollary, the word \eqref{ele.word} corresponding to this partition of $\mathbb R^2_+$ by the lines of \eqref{ele.lines} starts as
\begin{equation}\label{ele.wordbeg}
 \sigma_{t^*}\sigma_{t^*+1}\dots\sigma_{T-2}\sigma_{T-1}
\end{equation}
Note that this subword is completely determined by $\Sigma_\rho = \Sigma\{ \widetilde p(t,\rho) \mid t = 1,\dots,T\} \in S_T$.

Also note that if $\alpha>\alpha^*$ then $\pi_T(\alpha)=t^*$ and the $t^*$-th line does not appear in intersections of $R_\alpha$ with the pairs of lines of \eqref{ele.lines}. Thus, symbol $\sigma_{T-1}$ does not appear in the next part of the word \eqref{ele.word}.

Then remove the $t^*$-th line from family \eqref{ele.lines}, renumerate remaining lines and define the new permutation $\Sigma' \in S_{T-1}$ according to \eqref{ele.Sigmadef}. The new permutation $\Sigma'$ is uniquely determined by $\Sigma_\rho$.

The word \eqref{ele.word} corresponding to the new partition is obtained from the old word by removing the beginning \eqref{ele.wordbeg}.

It remains to apply the induction hypothesis to the new partition and corresponding formal word \eqref{ele.word}.

Proposition \ref{ele.LZ} is proved.
\end{proof}

\begin{proof}[Proof of Proposition \ref{ele.snake+}.]
Let $\alpha \in (0,\tfrac \pi 2)$ be such that $\pi(\alpha)=\lambda$, where $\pi(\alpha)$ is defined in \eqref{ele.pipermut}. Let $G_0,\dots,G_T$ be the domains of partition $\Lambda\{ \widetilde p(t,\rho) \mid t = 1,\dots,T \}$ consecutively traversed by the point $(z_1,z_2(z_1)) \in R_\alpha$ as $z_1$ goes from $+\infty$ to $0$. 

Let $\zeta_j \in \mathbb R^2_+$, $r_j > 0$, $j = 1$, \dots, $T$, be such that
\begin{equation}
  B_{r_j}(\zeta_j)= \bigl\{ x\in \mathbb R^2 \mid | x - \zeta_j | < r_j \bigr\} \subset G_j \quad (j=1,\dots,T). 
\end{equation}
Put
\begin{equation}
\begin{gathered}
  f(x) = \frac{y(\lambda(T))}{\pi r_T^2}, \quad x \in B_{r_T}(\zeta_T), \\
  f(x) = \frac{ y(\lambda(T-j))-y(\lambda(T-j+1))}{\pi r_{T-j}^2}, \quad x\in B_{r_j}(\zeta_j), \; 1 \leq j \leq T-1, \\
  f(x) = 0, \quad x \not\in B_{r_j}(\zeta_j), \; 1 \leq j \leq T.
\end{gathered}
\end{equation}
Then $\mu(dx) = f(x) dx$ is a non-negative absolutely continuous measure which solves the moment problem \eqref{ele.mompr1}.
\end{proof}

\begin{proof}[Proof of Proposition \ref{ele.snake-}.] We begin by showing that if $Sn(\lambda)$ does not belong to the closed bounded domain bounded by $Sn(\id_{S_T})$ and $Sn(\Sigma_\rho)$, then 
\begin{equation}\label{ele.existt1t2}
  \begin{gathered}
 \text{there exist $t_1$, $t_2 \in \{1,\dots,T\}$ such that}\\
  t_1 < t_2, \quad \widetilde p_2(t_1,\rho) < \widetilde p_2(t_2,\rho), \quad y(t_1)<y(t_2).
  \end{gathered}
\end{equation}
Suppose that \eqref{ele.existt1t2} is not true. Let $Y_t = (y(t),y(t))$, $t = 1$, \dots, $T$. We join $Y_t$ by the line segments with $(\tfrac{1}{\widetilde p_1(t,\rho)},0)$ and $(0,\tfrac{1}{\widetilde p_2(t,\rho)})$. We call this pair of segments a \textit{wire}.

If \eqref{ele.existt1t2} does not hold, each pair of wires has at most one intersection point, and thus the set of these wires is a strict wiring diagram. The boundaries of the closed domain bounded by this diagram coincide with boundaries of the closed domain bounded by the rhombic tiling corresponding to partition $\Lambda\{ \widetilde p(t,\rho) \mid t =  1,\dots,T \}$. This closed domain contains $Sn(\lambda)$, and this contradicts the hypothesis.

Next, suppose that $\mu$ is a solution of the moment problem \eqref{ele.mompr1}, and suppose that $Sn(\lambda)$ is not contained in the closed bounded region bounded by $Sn(\id_{S_T})$ and $Sn(\Sigma_\rho)$. As it was shown above, we have \eqref{ele.existt1t2}.

Condition $\widetilde p_2(t_1,\rho) < \widetilde p_2(t_2,\rho)$ implies that
\begin{equation}\label{ele.pincls}
  \begin{gathered}
  \bigl\{ (z_1,z_2)\in\mathbb R^2_+ \mid \widetilde p_1(t_2,\rho)z_1 + \widetilde p_2(t_2,\rho)z_1 < 1 \bigr\} \qquad\qquad\qquad \\
	\qquad\qquad\qquad\subset \bigl\{ (z_1,z_2) \in \mathbb R^2_+ \mid \widetilde p_1(t_1,\rho)z_1 + \widetilde p_2(t_1,\rho)z_2 < 1 \}.
  \end{gathered}
\end{equation}
Applying Proposition \ref{ide.propnes} with $\Omega_1 = \{t_1\}$, $\Omega_2 = \{t_2\}$ and using \eqref{ele.pincls}, we have that $y(t_2) \leq y(t_1)$. It contradicts \eqref{ele.existt1t2}. Thus, the moment problem \ref{ele.mompr1} is not solvable.

Proposition \ref{ele.snake-} is proved.
\end{proof}

 \section*{Aknowledgements}
 The present work is supported by the Russian Science Foundation (project number 16-11-10246).

\bibliographystyle{IEEEtranS}
\bibliography{biblio_utf}    

\end{document}